\newcommand{\bruce}[1]{\textsf{\color{blue} $\spadesuit$ Bruce: #1}}
\let\oldtocsection=\tocsection
\let\oldtocsubsection=\tocsubsection
\let\oldtocsubsubsection=\tocsubsubsection
\renewcommand{\tocsection}[2]{\hspace{0em}\oldtocsection{#1}{#2}}
\renewcommand{\tocsubsection}[2]{\hspace{1em}\oldtocsubsection{#1}{#2}}
\renewcommand{\tocsubsubsection}[2]{\hspace{2em}\oldtocsubsubsection{#1}{#2}}
\newcommand{\Bmu}{\mbox{$\raisebox{-0.59ex}
  {$l$}\hspace{-0.18em}\mu\hspace{-0.88em}\raisebox{-0.98ex}{\scalebox{2}
  {$\color{white}.$}}\hspace{-0.416em}\raisebox{+0.88ex}
  {$\color{white}.$}\hspace{0.46em}$}{}}
\newtheorem{theorem}{Theorem}
\newtheorem{proposition}[theorem]{Proposition}
\newtheorem{lemma}[theorem]{Lemma}
\newtheorem{corollary}[theorem]{Corollary}
\newtheorem{definition1}[theorem]{Definition}
\newtheorem{remark1}[theorem]{Remark}
\newcommand{\Z}{\mathbb{Z}}
\newcommand{\NN}{\mathbb{N}}
\newcommand{\Q}{\mathbb{Q}}
\newcommand{\BB}{\mathbb{B}}
\newcommand{\cM}{\mathcal{M}}
\newcommand{\wgr}{\widetilde{\mathit{gr}}}
\newcommand{\F}{\mathbb{F}}
\newcommand{\ggr}{{\sf{Gr}}}
\newcommand{\bFp}{\overline{\F}_p}
\newcommand{\oFp}{\overline{\F}_p}
\newcommand{\R}{\mathbb{R}}
\renewcommand{\H}{\mathbb{H}}
\newcommand{\Hh}{{\widehat{\H}}}
\newcommand{\GR}{\mathit{Gr}}
\newcommand{\BR}{\mathit{Br}}
\newcommand{\A}{\mathscr{A}}
\newcommand{\X}{\mathscr{X}}
\newcommand{\Aa}{\mathbb{A}}
\newcommand{\cSS}{\mathcal{S}}
\newcommand{\cP}{\mathscr{P}}
\newcommand{\msL}{\mathscr{L}}
\newcommand{\caP}{\mathcal{P}}
\newcommand{\cQ}{\mathcal{Q}}
\newcommand{\bs}{\backslash}
\newcommand{\bA}{\mathbb{A}}
\newcommand{\cO}{\mathcal{O}}
\newcommand{\OO}{\cO}
\newcommand{\cB}{\mathcal{B}}
\newcommand{\msH}{\mathscr{H}}
\newcommand{\omsH}{\overline{\msH}}
\newcommand{\msM}{\mathscr{M}}
\newcommand{\gr}{\mathit{gr}}
\newcommand{\br}{\mathit{br}}
\newcommand\sS{{\sf SP}}
\newcommand{\sSgpz}{\sS_{\!g}(p)_0}
\newcommand{\msP}{\mathscr{P}}
\newcommand{\HH}{\mathbb{H}}
\DeclareMathOperator\N{N}
\newcommand{\MM}{\operatorname{Mat}_{g\times g}}
\DeclareMathOperator\Hom{Hom}
\DeclareMathOperator\Gal{Gal}
\DeclareMathOperator\End{End}
\DeclareMathOperator\Id{Id}
\DeclareMathOperator\GSp{GSp}
\DeclareMathOperator\Sp{Sp}
\DeclareMathOperator\rU{U}
\DeclareMathOperator\Pic{Pic}
\DeclareMathOperator\HNm{HNm}
\DeclareMathOperator\PU{PU}
\DeclareMathOperator\PGU{PGU}
\DeclareMathOperator\GU{GU}
\DeclareMathOperator\SU{SU}
\DeclareMathOperator\SL{SL}
\DeclareMathOperator\SLT{\SL_2}
\DeclareMathOperator\GL{GL}
\DeclareMathOperator\PGL{PGL}
\DeclareMathOperator\Ta{Ta}
\DeclareMathOperator\GG{G}
\DeclareMathOperator\Gr{Gr}
\DeclareMathOperator\Ver{Ver}
\DeclareMathOperator\val{val}
\DeclareMathOperator\Ed{Ed}
\DeclareMathOperator\Iso{Iso}
\DeclareMathOperator\Disc{Disc}
\DeclareMathOperator\Mat{Mat}
\DeclareMathOperator\Ad{Ad}
\DeclareMathOperator\Pf{Pf}
\DeclareMathOperator\id{id}
\DeclareMathOperator\charr{char}
\DeclareMathOperator\Grr{Gr}
\DeclareMathOperator\Frobb{Frob}
\DeclareMathOperator\w{w}
\DeclareMathOperator\Adw{\Ad_w}
\DeclareMathOperator\iso{iso}
\DeclareMathOperator\Aut{Aut}
\DeclareMathOperator\Norm{Nm}
\DeclareMathOperator\rdeg{rdeg}
\DeclareMathOperator\Jac{Jac}
\begin{document}
\title{Isogeny graphs of superspecial abelian varieties
  and Brandt matrices}
\subjclass[2010]{Primary 14K02; Secondary 11G10, 14G15}
\keywords{superspecial, abelian varieties, isogeny graphs, Brandt matrices,
quaternionic unitary group}

\author{Bruce~W.~Jordan}
\address{Department of Mathematics, Baruch College, The City University
of New York, One Bernard Baruch Way, New York, NY 10010-5526, USA}
\email{bruce.jordan@baruch.cuny.edu}

\author{Yevgeny~Zaytman}
\address{Newton, MA 02465, USA}
\email{gzaytman@alum.mit.edu}

\begin{abstract}
Fix primes $p$ and $\ell$ with $\ell\neq p$.
If $(A,\lambda)$ is a $g$-dimensional principally polarized abelian variety,
an $(\ell)^g$-isogeny of $(A,\lambda)$ has kernel a maximal isotropic
subgroup of the $\ell$-torsion of $A$; the image has a natural
principal polarization.  
In this paper we study the isogeny graphs of
$(\ell)^g$-isogenies
of principally polarized superspecial abelian varieties in characteristic
$p$.
We define three isogeny graphs associated to such $(\ell)^g$-isogenies --
the big isogeny graph $\GR_{\!g}(\ell,p)$, the little isogeny graph
$\gr_{\!g}(\ell,p)$, and the 
enhanced isogeny graph $\wgr_{\!g}(\ell, p)$.
We apply strong approximation for the quaternionic unitary group to prove both that  
$\gr_{\!g}(\ell,p)$ and $\GR_{\!g}(\ell,p)$ are connected and that they are
not bipartite. The connectedness of the enhanced isogeny
graph $\wgr_{\!g}(\ell,p)$ then follows. 
The quaternionic unitary group has previously been applied to moduli
of abelian varieties in characteristic $p$ (sometimes invoking strong
approximation) by Chai, Ekedahl/Oort, and Chai/Oort.
The adjacency matrices
of the three isogeny graphs are given in terms of the Brandt matrices
defined by Hashimoto, Ibukiyama, Ihara, and Shimizu. We
study some basic properties of these Brandt matrices
and recast the theory using the notion of Brandt graphs.
We show that the isogeny graphs $\GR_{\!g}(\ell, p)$ and $\gr_{\!g}(\ell, p)$ are
in fact our Brandt graphs. 
We give the $\ell$-adic
uniformization of $\gr_{\!g}(\ell,p)$ and $\wgr_{\!g}(\ell,p)$.
The $(\ell+1)$-regular isogeny graph $\GR_1(\ell,p)$ 
for supersingular elliptic
curves is well known to be
Ramanujan.  We calculate the Brandt matrices for a range of $g>1$, $\ell$,
and $p$.  These calculations give four examples with $g>1$
where the regular graph $\GR_{\!g}(\ell,p)$ has two vertices
and is Ramanujan, and all other examples we computed  with $g>1$ and
two or more vertices were not Ramanujan.  In particular, the 
$(\ell)^g$-isogeny graph is not in general Ramanujan for $g>1$.
\end{abstract}
\maketitle
\vspace*{.14in}

\newpage

\renewcommand{\baselinestretch}{0.94}\normalsize
\tableofcontents
\renewcommand{\baselinestretch}{1.00}\normalsize

\newpage

\section{Introduction}
A superspecial abelian variety $A/\oFp$ of dimension $g$ is isomorphic
to a product of $g$ supersingular elliptic curves.
  If $g>1$, surprisingly
all such products are isomorphic to each other by Theorem \ref{sunn} below.
Fix a supersingular
elliptic curve $E/\oFp$ with $\cO=\cO_E=\End(E)$ a maximal order in
the rational definite quaternion algebra $\HH_p$ ramified at $p$.
\begin{theorem}
\label{sunn}
\textup{(Deligne, Ogus \cite{og}, Shioda \cite{shi}) }
Suppose $A/\oFp$ is a superspecial abelian variety with
$\dim A=g>1$.  Then $A\cong E^g$.
\end{theorem}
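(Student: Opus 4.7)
The plan is to reduce the geometric statement to a class-number-one assertion about right $\cO$-modules and then invoke strong approximation for a suitable $\Q$-algebraic group --- the same circle of ideas the paper develops in later sections.

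First, I would use that a superspecial $A/\oFp$ is isomorphic to some product $E_1 \times \cdots \times E_g$ of supersingular elliptic curves over $\oFp$ (nontrivial if one takes the $A[p]$-based definition of superspecial, due to Oort; tautological under a product-style definition). Because there is only one isogeny class of supersingular elliptic curves over $\oFp$, each $E_i$ is isogenous to the fixed $E$, and its isomorphism class is recorded by the right $\cO$-ideal class of the rank-one projective right $\cO$-module $I_i := \Hom(E, E_i)$.

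Next I would invoke Deligne's equivalence of categories: the functor $B \mapsto \Hom(E, B)$ is an equivalence between abelian varieties in the isogeny class of $E^g$ over $\oFp$ and finitely generated torsion-free right $\cO$-modules of rank $g$ over $\HH_p$, with quasi-inverse the Serre tensor construction $M \mapsto M \otimes_{\cO} E$. Under this equivalence, $A = \prod_i E_i$ corresponds to $\bigoplus_i I_i$ and $E^g$ corresponds to $\cO^g$, so the theorem reduces to the purely algebraic statement that for $g \geq 2$, any direct sum of $g$ rank-one projective right $\cO$-modules is free.

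This reduced statement is the class-number-one property for rank-$g$ projective modules over the maximal order $\cO$ in the definite quaternion algebra $\HH_p$. The standard route is to combine a Steinitz-type stability isomorphism (which reduces the sum to $\cO^{g-1} \oplus J$ for some invertible right ideal $J$) with strong approximation for a $\Q$-form of $\SL_g$ over $\HH_p$, whose real points form a non-compact Lie group precisely when $g \geq 2$. The hard step is unambiguously the cancellation $\cO^{g-1} \oplus J \cong \cO^g$ for $g \geq 2$: the definiteness of $\HH_p$ makes the norm-one elements of $\HH_p$ itself compact at infinity --- which is exactly why the corresponding statement fails for $g = 1$ --- but passing to rank $\geq 2$ restores the non-compactness that strong approximation requires. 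This is also the same mechanism the paper will later use to prove connectedness of $\gr_{\!g}(\ell, p)$, so the analytic input is consistent with the machinery developed elsewhere in the text.
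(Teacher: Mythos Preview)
The paper does not actually prove this theorem: it is stated in the Introduction with the attribution ``(Deligne, Ogus, Shioda)'' and used as a foundational input, but no proof is supplied anywhere in the text. So there is no ``paper's own proof'' to compare against.

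That said, your sketch is a reasonable outline of one standard route to the result, and it is consonant with the toolkit the paper develops. A few comments on the sketch itself. First, the step ``superspecial implies isomorphic to a product of supersingular elliptic curves'' is itself nontrivial and is due to Oort; you flag this, but it deserves emphasis since it carries real content. Second, the module-theoretic reduction you describe is essentially the Deligne argument: once one knows $A \cong \prod_i E_i$, the functor $\Hom(E,-)$ identifies the problem with showing that every projective right $\cO$-module of rank $g \ge 2$ is free, and this follows from Eichler's theorem (strong approximation for the norm-one group of $\HH_p$ with a finite place removed), which gives stable freeness plus cancellation in rank $\ge 2$. Your invocation of a $\Q$-form of $\SL_g$ is a correct way to package this, though the classical references phrase it via Eichler's results on ideal classes rather than via $\SL_g(\HH_p)$ directly. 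Third, note that Shioda's original argument (for $g=2$) and Ogus's treatment proceed somewhat differently, via explicit lattice or crystalline computations, so the attribution in the paper covers several distinct proofs; yours is closest in spirit to the one usually attributed to Deligne.
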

\noindent So for dimension $g=1$ there are {\em{many}} superspecial abelian
varieties ($=$ supersingular elliptic curves) each with {\em{one}}
principal polarization, but for $g>1$ there is {\em{one}} superspecial
abelian variety with {\em{many}} principal polarizations.

Let $\A=(A=E^g,\lambda)$ be a principally polarized superspecial abelian
variety of dimension $g$ over $\oFp$ with $\oFp$-isomorphism class
$[\A]$. The principal polarization $\lambda$  is an isomorphism
 from $A$ to $\hat{A}=\Pic^{0}(A)$
satisfying the conditions of Definition \ref{polar}.
The number $h=h_g(p)$  of such isomorphism classes $[\A]$ is finite
and is a type of class number.  For $g\geq 1$ set
\begin{equation}
\label{cloud}
\begin{split}
\sS_{\!g}(p)_0 & = \{\oFp\text{-isomorphism classes $[\A]$}\}\\
  & = \{ [\A_1],\ldots , [\A_h]\}\,\text{ with }\,\A_j=(A_j=E^g, \lambda_j)
\text{ if $g>1$.}
\end{split}
\end{equation}
So, for example,
\begin{align*}
\sS_1(p)_0 & =\{\text{supersingular $j$-invariants in characteristic $p$}\}
\text{ and}\\
\#\sS_{1}(p)_0 &  =h_1(p)=h(\HH_p), \text{ the class number of the quaternion
algebra $\HH_p$.}
\end{align*}

A principal polarization $\lambda$ on the abelian variety $A/\oFp$ defines
a Weil pairing on $A[n]$ with $(n,p)=1$: 
$\langle \,\,\, ,\,\,\,\rangle_{\lambda,n}\colon A[n]\times A[n]\rightarrow
\Bmu_{n}$. For $(n,p)=1$, put
\begin{equation}
\label{grid}
\Iso_n(\A)=\{\text{maximal isotropic subgroups $C\subseteq A[n]$}\}\quad
\text{with}\quad N_g(n)\colonequals \#\Iso_n(\A).
\end{equation}
Note that $N_g(n)$ is the number of maximal isotropic subgroups
of the standard nondegenerate symplectic $\Z/n\Z$-module of rank $2g$.
In case $n=\ell\neq p$ is prime we have
\begin{equation}
\label{grid1}
\#\Iso_\ell(\A)\equalscolon N_g(\ell)=\prod_{k=1}^g(\ell^k + 1);
\end{equation}
see, for example,  \cite[p.~419]{Pl}.
Suppose $C\subseteq A[\ell]$ is a subgroup with corresponding isogeny
$\psi_C:A\rightarrow A/C\equalscolon A'$.  Then there is a principal
polarization $\lambda'$ on $A'$ so that $\psi_C^\ast(\lambda')=\ell \lambda$ 
if and only if $C\in\Iso_\ell(\A)$.  In this
case write $\A'=(A',\lambda')=\A/C$ and say that $\psi_C$ is an
$(\ell)^g$-isogeny.  If $[\A]\in\sS_g(p)_0$, then
$[\A']\in\sS_g(p)_0$.  Such $(\ell)^g$-isogenies induce
correspondences from the finite set $\sS_{\!g}(p)_0$ to itself.  These
correspondences can be used to define various graphs---in this paper
we define {\em three} $(\ell)^g$-isogeny graphs: the {\em big} isogeny
graph $\GR_{\!g}(\ell,p)$, the {\em little} isogeny graph
$\gr_{\!g}(\ell, p)$, and the {\em enhanced} isogeny graph
$\wgr_{\!g}(\ell,p)$.  The literature seems 
to have only one 
isogeny graph; this ubiquitous graph is the big isogeny graph 
$\GR_{\!g}(\ell,p)$
for us. 

Distinguishing between these three 
makes many results clearer and more precise.  Take the case $g=1$ for 
example:
the little and enhanced isogeny graphs are uniformized by the 
Bruhat-Tits tree $\Delta=\Delta_{\ell}$ of $\SLT(\Q_{\ell})$;
the big isogeny graph $\GR_{\!1}(\ell,p)$ is not, cf. Section \ref{g=1q}.
And it is $\gr_{\!1}(\ell,p)$ and $\wgr_{\!1}(\ell,p)$
which arise from the bad reduction of Shimura curves and not
the familiar big isogeny graph $\GR_{\!1}(\ell,p)$ as we show in
Section \ref{eel}.
For general $g\geq 1$, the big isogeny graph
$\GR_{\!g}(\ell, p)$ is a regular graph by Theorem \ref{adbig}\eqref{adbig3},
so it is natural to ask if it is Ramanujan, whereas the little isogeny
graph $\gr_{\!g}(\ell,p)$ and the  
enhanced isogeny graph $\wgr_{\!g}(\ell,p)$ are not regular.

In this introduction we content ourselves with
defining the simplest of the three, the big isogeny graph $\GR=
\GR_{\!g}(\ell,p)$:
\begin{definition1}
\label{damp}
{\rm
The vertices of the graph $\GR=\GR_{\!g}(\ell,p)$ are
$\Ver(\GR)=\sS_{\!g}(p)_0$, so $h=h_g(p)=\#\Ver(\GR)$.
The (directed) edges of $\GR$ connecting the vertex
$[\A_i]\in\sS_{\!g}(p)_0$ to the vertex $[\A_j]\in\sS_{\!g}(p)_0$ are
\[
\Ed(\GR)_{ij}=\{C\in\Iso_\ell(\A_i)\mid [\A_i/C]=[\A_j]\}.
\]
}
\end{definition1}
\noindent The adjacency matrix $\Ad(\GR)_{ij}=\#\Ed(\GR)_{ij}$ is a constant
row-sum matrix by \eqref{grid}:
\begin{equation}
\label{sunset}
\sum_{j=1}^h\#\Ed(\GR)_{ij}=\prod_{k=1}^g(\ell^k +1).
\end{equation}

This paper studies these three $(\ell)^g$-isogeny graphs via definite
quaternion algebras. 
It naturally divides into two parts -- Part 1
(Sections \ref{def} --\ref{brr}) develops this 
infrastructure on definite quaternion
algebras; Part 2 (Sections \ref{revenue} -- \ref{Ram})  connects the quaternion infrastructure
to superspecial abelian varieties together with their polarizations and 
isogenies, and 
then applies it to our three isogeny graphs.
In Section \ref{def} we prove the foundational material required on 
the arithmetic of definite quaternion algebras together with the Hermitian forms and unitary groups defined from them.
Section \ref{br} introduces the Brandt matrices $B_g(\ell)$ for the
maximal order $\cO$ of $\HH_p$, first
defined for $g>1$ in the 1980's by Hashimoto, Ibukiyama,
Ihara, and Shimizu -- see \cite{Ha}.
Gross's algebraic modular forms \cite{Gr1} for the quaternionic unitary
group subsequently provided a more general context for these matrices.
 In Section \ref{brr} we extend Brandt matrices to Brandt graphs
$\BR_{\!g}(\ell,p)$ and $\br_{\!g}(\ell,p)$;
we further extend Brandt graphs to Brandt simplicial complexes
in \cite{jz}. 
Brandt graphs, like Brandt matrices,
 are defined entirely
in terms of definite quaternion algebras
and as such are amenable to machine computation.
Brandt graphs contain slightly more 
information than Brandt matrices --
the  Brandt matrix  $B_g(\ell)$ is the adjacency matrix of
the big Brandt graph $\BR_{\!g}(\ell,p)$ and the weighted adjacency
matrix of the little Brandt graph with weights $\br_{\!g}(\ell,p)$
(Proposition \ref{sail}).

In Part \ref{smile2} we turn to algebraic geometry.  We
consider
superspecial abelian varieties, their polarizations, and their isogenies
in Section \ref{revenue}.
We introduce the key  notion of an $[\ell]$-polarized abelian  variety
and its $[\ell]$-dual.  
Section \ref{ble} then
defines
the three $(\ell)^g$-isogeny graphs
$\GR_{\!g}(\ell,p)$, $\gr_{\!g}(\ell,p)$, and $\wgr_{\!g}(\ell,p)$.
Sections \ref{co} -- 
\ref{Ram} contain our  main results on isogeny graphs, 
which we now summarize.

{\bf A. Relationship between the quaternion infrastructure and our isogeny graphs.}
We prove in Theorem \ref{adbig}
 the fundamental result that big isogeny graph is the big
Brandt graph: $\GR_{\!g}(\ell,p)=\BR_{\!g}(\ell, p)$.
Likewise the little isogeny graph with weights is the little Brandt
graph with weights: $\gr_{\!g}(\ell,p)=\br_{\!g}(\ell,p)$ (Theorem \ref{smad}).
We further explain how to get the enhanced isogeny graph $\wgr_{\!g}(\ell,p)$
from the little isogeny graph $\gr_{\!g}(\ell,p)$ in Theorem \ref{dealing}.
Because of these theorems our three isogeny graphs can all be defined
and computed entirely in terms of definite quaternion algebras -- it is never
necessary to write down superspecial abelian varieties or isogenies.
In Section \ref{Ram}, we
compute our isogeny graphs  for a range of $174$ 
triples $(g,\ell,p)$
with $g=2,\,3$
including
$13$ examples with $g=3$ -- an impossible feat working with
explicit superspecial abelian varieties and $(\ell)^g$-isogenies.

{\bf B. Connectedness theorems.}\hspace*{1em}It is well known that
the $\ell$-isogeny graph $\Gr_{1}(\ell,p)$ for supersingular elliptic curves in characteristic
$p$ is connected.  A main theorem of this paper is
that the isogeny graphs 
$\GR_{\!g}(\ell,p)$, $\gr_{\!g}(\ell,p)$, and $\wgr_{\!g}(\ell,p)$ are 
connected for $g\geq 1$; cf.  Section \ref{co}.
This had been conjectured for $g=\ell=2$ in \cite[Conjecture 1]{CDS}, 
for example;
we establish the result here for all $\ell$ and $g\geq 1$.
  Additionally we prove
that $\GR_{\!g}(\ell,p)$ and $\gr_{\!g}(\ell,p)$ are not bipartite.
Besides results on polarizations, the main ingredients of the proof for $g>1$ are strong approximation
for the quaternionic unitary group (Theorem \ref{g>1}) and 
Theorem \ref{elect}
on factoring isogenies which in turn follows
from Theorem \ref{elect1} on the symplectic group $\Sp_{2g}$ over $\Z/\ell^n\Z$.
Note that knowing strong approximation still requires the results
on factoring isogenies to deduce connectedness.
The quaternionic unitary group has previously been applied to moduli
of abelian varieties in characteristic $p$ by Chai \cite[Prop.~1]{Ch},
Ekedahl/Oort \cite[\S 7]{EO},
and Chai/Oort \cite[Prop.~4.3]{CO}; a version of strong approximation for the 
quaternionic unitary group is given in \cite[Lemma 7.9]{EO}.

{\bf C. \boldmath{$\ell$}-adic uniformization;
Shimura curves when \boldmath{$g=1$}.}\hspace*{1em}Let
$\Gamma_0=\cO[1/\ell]^\times$ viewed as a subgroup of $\GL_2(\Q_\ell)$
with $\overline{\Gamma}_0$ its image in
$\PGL_2(\Q_\ell)$.  Similarly let $\Gamma_1=\{\gamma\in\Gamma_0\mid
\Norm_{\HH_p/\Q}(\gamma)=1\}$ with $\overline{\Gamma}_1$ its image in 
$\PGL_2(\Q_\ell)$.  Let $\Delta=\Delta_\ell$ be the Bruhat-Tits tree
for $\SL_2(\Q_\ell)=\Sp_2(\Q_\ell)$.  We prove that 
$\gr_1(\ell,p)=\Gamma_0\backslash
\Delta_\ell$ and $\wgr_1(\ell,p)=\Gamma_1\backslash\Delta_\ell$ as graphs with
weights in Theorem \ref{twooo}. We then generalize this to $g>1$ in
Theorem \ref{Dos}: Let $\cSS_{2g}$ be the special $1$-skeleton of the 
Bruhat-Tits building $\cB_{2g}$ for the symplectic group
$\Sp_{2g}(\Q_\ell)$ as in Remark \ref{motel}.  Let 
 $\rU_g(\cO[1/\ell])$ be the quaternionic unitary group with 
$\GU_g(\cO[1/\ell])$ the general quaternionic unitary group as in
\eqref{wet}.  Then we prove
$\gr_{\!g}(\ell,p)=\GU_g(\cO[1/\ell])\backslash \cSS_{2g}$
and $\wgr_{\!g}(\ell,p)=\rU_g(\cO[1/\ell])\backslash \cSS_{2g}$ as graphs with
weights---see Theorem \ref{Dos}.

When $g=1$ we can use this result to connect the $\ell$-isogeny graph
$\wgr_{1}(\ell,p)$ for supersingular elliptic curves in characteristic $p$
to the bad reduction of Shimura curves.  Let $B$ be the rational quaternion
algebra of discriminant $\ell p$ with $\cM\subset B$ a maximal order.
Let $V_B/\Q$ be the Shimura curve parametrizing abelian surfaces with
quaternionic multiplication (QM) by $\cM$ with $M_B/\Z$ the coarse moduli
scheme model for $V_B /\Q$ constructed by Drinfeld \cite{drin}. Then
$M_B\times\Z_\ell$ is an {\em{admissible curve}} in the sense of
\cite[Defn.~3.1]{jl1}, and so has a dual graph 
\cite[Defn.~3.2]{jl1} $\GG(M_B\times\Z_\ell/\Z_\ell)$
which is a graph with lengths as in Definition \ref{weight1}\eqref{weight12}.
We show in Corollary \ref{font1}\eqref{font11} that
$\GG(M_B\times\Z_\ell/\Z_\ell)=\wgr_1(\ell,p)$.  But the dual graph
$\GG(M_B\times\Z_\ell/Z_\ell)$ governs vanishing cycles on the curve
$M_B\times \Z_\ell/\Z_\ell$: the character group of the N\'{e}ron model
of the jacobian $\Jac(V_B)/\Q_\ell$ is 
$H_{1}(\GG(M_B\times\Z_\ell/\Z_\ell),\Z)$.
The fact that the dual graph of the Shimura curve $V_B$ in characteristic
$\ell$ is an isogeny graph for supersingular elliptic curves
in the different characteristic $p$ is the key to Ribet's proof
\cite{Rib} of Serre's Conjecture ``Epsilon'', and so ultimately to 
Fermat's Last Theorem.

Generalizing this picture to $g>1$ is compelling: Relate
$\gr_{\!g}(\ell,p)$, $\wgr_{\!g}(\ell,p)$ to vanishing cycles for 
higher-dimensional Shimura varieties over $\Q_\ell$.

{\bf D. The Ramanujan property for $\GR_{\!g}(\ell,p)$.}\hspace*{1em}
The big isogeny graph $\GR_{\!g}(\ell,p)$ is a regular graph, and 
one can ask whether it is Ramanujan.  If $g=1$ it is always 
Ramanujan, as follows from the Riemann hypothesis for curves over finite
fields.  
Hence naively one might expect the Ramanujan property to continue
to hold for $g\geq 2$ -- see, for example, \cite[Hypothesis 1]{CS}.
The adjacency matrix $\Ad(\GR_{\!g}(\ell,p))$ is the Brandt
matrix $B_g(\ell)$, and so amenable to machine computation
as discussed in {\bf A} above.
In Section \ref{Ram} we give the results of checking the Ramanujan
property over a range of $\ell$ and $p$ with $g=2,\,3$.  The 
memory requirements grow rapidly with $\ell$ and especially $g$;
we had no computations finish with $g>3$. We computed $174$
examples with $g>1$ and $2$  or more vertices
 and found only $4$ Ramanujan: $(g,\ell, p)=
(2,2,5),\, (2,2,7), \,(2,3,7), \,(3,2,3)$ are Ramanujan.
They all have two vertices, although not every $2$-vertex
$\GR_{\!g}(\ell,p)$ is Ramanujan. 
So seemingly for $g\geq 2$ the isogeny
graph $\GR_{\!g}(\ell, p)$ is generically {\em not} Ramanujan.
In Section \ref{Ram1}
 we compute $\GR_2(2,11)$ in terms of superspecial abelian
surfaces and Richelot isogenies, thereby giving
a non-Ramanujan example computed entirely by algebraic geometry. 
In Section \ref{Ram2} we likewise compute $\GR_2(2,7)$ in terms
of abelian surfaces and Richelot isogenies to give a Ramanujan
example computed entirely via algebraic geometry.

We conclude the introduction with brief comments on prior results.
The case $g=1$ was the setting for multiple proposals in
post-quantum cryptography, and naturally the question of generalizing
to $g>1$ arose.  Castryck, Decru, and Smith \cite{CDS}  proposed the superspecial isogeny graph $\GR_2(2,p)$ as a good generalization to abelian surfaces.
Previous work often concentrates on $\GR_2(2, p)$ where computations are
feasible using classical
Richelot isogenies -- see, for example,
Katsura and Takashima \cite{KK} and the references therein.
(In contrast, we compute $\GR_g(\ell, p)$ by computing Brandt matrices
for quaternion algebras.) The paper \cite{ATY} gives an alternate
definition of $\GR_g(\ell,p)$ and develops this.

\part{The quaternion infrastructure}

\label{smile}
\section{Definite rational quaternion algebras}
\label{def}

Let $\H$ be a definite quaternion algebra over $\Q$ with a maximal
order $\cO_\H$, main involution
$x\mapsto \overline{x}$, and reduced norm
$\Norm_{\H/\Q}(x)=\Norm(x)=x\overline{x}$.
Set $\H_1^\times=\{h\in\H^\times \mid \Norm_{\H/\Q}(x)=1\}$.
The reduced norm $\Norm:\H\rightarrow \Q$ generalizes to the reduced norm 
$\Norm: \Mat_{g\times g}(\H)\rightarrow \Q$
(given by a  multiplicative polynomial of degree $2g$ in the entries of
the matrix).
Put
\begin{equation}
\label{dotted}
\SL_g(\cO_\H)=\{M\in\MM(\cO_\H)\mid \Norm(M)=1\}
\end{equation}
with $\SL_g(\H)$ defined analogously.
Note that 
\[
\SL_g(\cO_\H)=\GL_g(\cO_\H)=\{M\in\MM(\cO_\H)\mid M \text{ is invertible}\}.
\]

Let $\widehat{\Z}=\varprojlim \Z/n\Z$ be the profinite completion
of $\Z$ and 
$\widehat{\Q}=\widehat{\Z}\otimes\Q$ the finite ad\`eles of
$\Q$.  Then $\cO_{\widehat{\H}}=\cO_{\H}\otimes \widehat{\Z}$ is the profinite
completion of $\cO_\H$ and $\widehat{\H}=\cO_{\widehat{\H}}\otimes \Q$ is the 
finite ad\`{e}les of $\H$.

\subsection{Hermitian matrices}
\label{her}

Let $g\ge 1$ be an integer.  
 A matrix $H\in \MM(\cO_\H)$ is {\sf Hermitian}
if $H^\dagger \colonequals \overline{H}^t =H$.  
Set
\begin{equation}
\label{ox}
\msH_g(\cO_\H)=\{H\in\Mat_{g\times g}(\cO_\H)\mid H \text{ is positive-definite Hermitian}\}.
\end{equation}
The ``Haupt norm'' $\HNm$ of Braun-Koecher \cite[Chap.~2, \S 4]{BK}
(see also \cite[Thm.~6 and proof, \S 21]{Mu}) is defined
on Hermitian matrices in $\Mat_{g\times g}(\HH)$ and  gives a map
$\HNm:\msH_g(\cO_\H)\rightarrow \NN$. It is characterized by
$\HNm(\Id_{g\times g})=1$ and $\Norm(H)=\HNm(H)^2$ for a Hermitian matrix 
$H\in\Mat_{g \times g}(\HH)$; see \cite[p.~152, 153]{Ek}, where $\HNm$
is denoted $\Pf$ and is defined via the usual Pfaffian on skew-symmetric 
matrices.
For an integer $d\geq 1$ put
\begin{equation}
\label{hermit}
\msH_{g,d}(\cO_\H)=\{H\in\msH_g(\cO_\H)\mid \HNm(H)=d\}.
\end{equation}
The group $\SL_g(\cO_\H)=\GL_g(\cO_\H)$ acts on $\msH_{g,d}(\cO_\H)$ by
$H\cdot M=M^\dagger H M$. Set
\begin{equation}
\label{class}
\overline{\msH}_{\!\!g,d}(\cO_\H)\colonequals \msH_{g,d}(\cO_\H)/\SL_g(\cO_\H)
\end{equation}
with $[H]\in\overline{\msH}_{\!\!g,d}(\cO_\H)$ the class defined by 
$H\in\msH_{g,d}(\cO_\H)$. The sets $\overline{\msH}_{\!\!g,d}(\cO_\H)$
for $d\geq 1$ are finite.

\subsection{Strong Approximation for \texorpdfstring{\except{toc}{\boldmath{$\H_1^\times$}}\for{toc}{$\H_1^\times$}}{\unichar{"210D}\unichar{"2081}\unichar{"00D7}}}
\label{rotor}

We now give the statement of strong  approximation followed by
several consequences for the multiplicative group of norm-$1$ quaternions.
In Section \ref{co} we will use strong approximation for
the quaternionic unitary group.
\subsubsection{Strong approximation}
\label{str}
Let $k$ be an algebraic number field with $\infty$ the set of all
archimedean places of $k$. Let
$S\supseteq\infty$ be a finite set of places of $k$. 
Let $G$ be a linear algebraic group over $k$.
Let $G_{\bA}$ be the ad\`{e}le group of $G$,
$G_{S}\subset G_{\bA}$ be the $S$-component $\prod_{v\in S}G_{k_v}$ of $G_{\bA}$,
and $G_{k}\subset G_{\bA}$ be the $k$-rational points of $G$ embedded
diagonally.
\begin{definition1} 
{\rm
The pair $(G,S)$ has {\sf strong approximation}
if $G_{S}G_{k}$ is dense in $G_{\bA}$.
}
\end{definition1}

Say that a connected noncommutative linear algebraic group $G$ 
over a field $k$ is 
$k$-{\sf simple} if it has no positive-dimensional proper
normal subgroups.
We now give a statement of Strong Approximation sufficient for our
purposes, quoting Platonov and Rapinchuk~\cite[Thm.~7.12]{PR}.
The general result is due to Kneser~\cite{K}.

\begin{theorem}
\label{sun}
Let $G$ be a  simply connected and $k$-simple linear algebraic
group over a number field $k$.  Suppose $G_{S}$ is not compact.
Then $(G, S)$ has strong
approximation.
\end{theorem}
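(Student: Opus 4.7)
The statement is a deep classical theorem of Kneser, subsequently refined through work of Platonov, Margulis, Prasad, and others, and in the body of this paper there is nothing to do beyond quoting \cite[Thm.~7.12]{PR}. Nonetheless, were one to attempt a self-contained proof, the plan would proceed as follows.

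First, by a standard reformulation, one reduces the density of $G_k G_S$ in $G_{\bA}$ to the density of the diagonal image of $G_k$ in the restricted direct product ${\prod}'_{v\notin S} G_{k_v}$. Concretely, given a finite set $T$ of places disjoint from $S$, a non-empty open subset $U\subset \prod_{v\in T}G_{k_v}$, and a compact-open subgroup $K^{S,T}\subset {\prod}'_{v\notin S\cup T}G_{k_v}$, the task is to exhibit $\gamma\in G_k$ simultaneously lying in $U$ and in $K^{S,T}$.

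The first substantive step is to exploit the hypothesis that $G_S$ is noncompact: since $G$ is simply connected and $k$-simple, noncompactness of $G_S$ forces $G$ to be isotropic over some $v_0\in S$, and consequently $G$ contains a nontrivial unipotent $k_{v_0}$-subgroup realized as the unipotent radical of a suitable parabolic. The key technical reduction then exploits the Bruhat decomposition, according to which a neighborhood of the identity in $G_{\bA}$ can be covered by products of neighborhoods in the unipotent radicals of a pair of opposite parabolics; for such additive-type groups, strong approximation is essentially elementary, reducing to the classical strong approximation for the additive group $\bA$. Transporting this conclusion back to $G$ itself requires that $G_k$ be generated by the $k$-rational points of its $k$-isotropic unipotent subgroups -- this is the Kneser--Tits property.

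The main obstacle is therefore to establish Kneser--Tits generation whenever $G$ is $k$-isotropic. This splits naturally into a case-by-case analysis by Dynkin type: Eichler's theorem handles inner forms of type $A_n$, Kneser's work covers the remaining classical groups, and Platonov's theorem settles the exceptional types. Combined with the elementary strong approximation for the unipotent radicals and a Bruhat-cell patching argument across all finite places outside $S\cup T$, one obtains the theorem. For the applications in Section~\ref{co}, we will invoke Theorem~\ref{sun} as a black box, applying it to the quaternionic unitary group attached to $\cO_\H$.
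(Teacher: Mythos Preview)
Your proposal is correct and matches the paper's approach exactly: the paper does not prove Theorem~\ref{sun} but simply quotes \cite[Thm.~7.12]{PR}, attributing the general result to Kneser~\cite{K}. Your supplementary outline of how a self-contained proof would proceed is additional content not present in the paper, but it is a reasonable sketch of the Kneser--Platonov strategy and does no harm as context.
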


\subsubsection{A key lemma}
\label{lem}
\begin{lemma}\label{l0}
Let $\H/\Q$ be an arbitrary definite quaternion algebra with
maximal order $\cO_\H$, let $I$ be a fractional right
$\cO_\H$-ideal of norm $1$, and let $\ell$ be a prime unramified in
$\H$.  Then 
there exists an element in $I\otimes\Z[1/\ell]$ of norm $1$.
\end{lemma}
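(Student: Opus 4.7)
The plan is to apply strong approximation (Theorem~\ref{sun}) to the algebraic group $G=\H_1^\times$ over $\Q$ with the set of places $S=\{\infty,\ell\}$. The group $\SL_1(\H)$ is simply connected and $\Q$-simple; because $\H$ is definite, $G_\infty$ is the compact group of norm-one Hamiltonians, so $\infty$ alone does not suffice. The hypothesis that $\ell$ is unramified in $\H$ is exactly what makes $\H\otimes\Q_\ell\cong\Mat_2(\Q_\ell)$, so $G_{\Q_\ell}\cong\SL_2(\Q_\ell)$ is non-compact, and Theorem~\ref{sun} yields that the diagonal image of $G_\Q=\H_1^\times$ is dense in the restricted product $G^{(S)}=\prod{}'_{v\notin S}G_{\Q_v}$.

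Next I would carry out a local analysis at each finite place $v$. The completion $I_v$ is a fractional right $\cO_{\H,v}$-ideal with $\Norm(I_v)=\Z_v$. Since maximal orders in local quaternion algebras are noncommutative principal ideal rings, one can write $I_v=\beta_v\cO_{\H,v}$ for some $\beta_v\in\H_v^\times$; the norm condition forces $\Norm(\beta_v)\in\Z_v^\times$, and after right-multiplying $\beta_v$ by a unit of $\cO_{\H,v}$ of appropriate norm (possible because $\Norm\colon\cO_{\H,v}^\times\to\Z_v^\times$ is surjective) one may assume $\Norm(\beta_v)=1$. A direct computation then gives
\[
I_v\cap G_{\Q_v} \;=\; \beta_v K_v, \qquad K_v:=\cO_{\H,v}\cap G_{\Q_v},
\]
which is a coset of the compact open subgroup $K_v\subset G_{\Q_v}$, and equals $K_v$ itself for all but finitely many $v$ (since $I_v=\cO_{\H,v}$ at almost all places). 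Therefore
\[
U' \;=\; \prod_{v\neq\infty,\ell}\bigl(I_v\cap G_{\Q_v}\bigr)\;\subset\; G^{(S)}
\]
is a non-empty, basic open set in the restricted-product topology.

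Combining the two ingredients, density of the image of $G_\Q$ in $G^{(S)}$ forces $G_\Q\cap U'\neq\emptyset$, and any $g$ in this intersection lies in $\H_1^\times$ and in $I_v$ for every finite $v\neq\ell$---hence in $I\otimes\Z[1/\ell]$---with $\Norm(g)=1$ by construction. The main point that needs care is the local step: principality of $I_v$ together with the description of $I_v\cap G_{\Q_v}$ as a single $K_v$-coset. In the unramified case this reduces via the isomorphism $\cO_{\H,v}\cong\Mat_2(\Z_v)$ (and Morita equivalence) to the principality of $\Z_v$-lattices in $\Q_v^2$; in the ramified case it follows from the DVR-like structure of $\cO_{\H,v}$ generated by a uniformizer of norm $p$, with surjectivity of $\Norm$ on units supplied by the unramified quadratic subring. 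Once this local input is in hand, the passage to a global norm-one element is a formal consequence of strong approximation.
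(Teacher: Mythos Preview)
Your proof is correct and follows essentially the same approach as the paper: both apply strong approximation to $G=\H_1^\times=\SL_1(\H)$ with $S=\{\infty,\ell\}$, impose the local conditions $\beta\in I_v\cap G_{\Q_v}$ at each finite $v\neq\ell$, and verify non-emptiness via local principality of ideals. Your write-up is in fact more explicit than the paper's on two points---you spell out why the unramified hypothesis on $\ell$ is needed (to make $G_{\Q_\ell}\cong\SL_2(\Q_\ell)$ non-compact, since $G_\infty$ alone is compact), and you identify $I_v\cap G_{\Q_v}$ concretely as the coset $\beta_v K_v$---but the underlying argument is the same.
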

\begin{proof}
Let $G$ be the algebraic group over $\Q$ associated to 
$\H_1^\times=\{\beta\in\H^\times\mid \Norm(\beta)=1\}$;
then $G(\Q)=\H_1^\times$.
The algebraic group $G$ is
  simply connected with a simple Lie algebra since $G(\R)\cong\SU(2)$
  is.  (Let $G'$ be the algebraic group over $\Q$ assoicated
to $\H^\times$, so that $G'(\Q)=\H^\times$ and $G'(\R)=(\H\otimes_\Q\R)^\times$
is the multiplicative group of Hamilton real quaternions.
Note that $G'$  doesn't satisfy the hypotheses of
  Theorem~\ref{sun} (Strong Approximation): $G'(\R)=(\H\otimes_\Q\R)^\times$ is
  topologically 
$\mathbb{R}^4$ minus the origin, which is simply connected, but as a
  Lie group, $G'(\R)$ is $\mathbb{R}_{>0}^{\times}\times
  \SU(2)$, which is not simple.)  Let $S=\{\ell,\infty\}$.  By
  Theorem \ref{sun}, $(G,S)$ has strong approximation.

Now consider the subset $U\subset G_\bA$ given by the local
  conditions that at each prime $q\ne\ell$ we have $\beta\in
  (I\otimes\Z_q) \cap G(\Q_q)$.  Notice that this local condition is
the standard one that $\beta\in(\cO_\H \otimes \Z_q)^\times$
at all finite primes away from the numerator and denominator
of the fractional ideal $I$, hence $U$ is open.  That $U$ is
  nonempty follows from the fact that every right ideal in a
  quaternion algebra is locally principal.  Hence we see that
  $U\cap G_{S}G_\Q$ is nonempty and there exists some $\beta\in (I\otimes
  \Z[1/\ell]) \cap \H_1^\times$.
\end{proof}
\begin{lemma}\label{l1}
For each positive integer $x$ and any prime $\ell$ not ramified in
$\H$ there exists an element in $\cO_\H[1/\ell]$ of norm $x$.
\end{lemma}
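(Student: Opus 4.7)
The plan is to combine a global element of norm $x$ in $\H^\times$ with the strong approximation machinery of Lemma~\ref{l0}. First I would construct $\alpha_0\in\H^\times$ with $\N(\alpha_0)=x$. Since $\H$ is definite, at the archimedean place $\N:\H_\R^\times\to\R_{>0}^\times$ is surjective, and at every finite place $q$ the reduced norm on the central simple $\Q_q$-algebra $\H\otimes\Q_q$ is surjective onto $\Q_q^\times$ (standard for central simple algebras over local fields, covering both ramified $q$ and split $q$ where the norm is $\det$). The Hasse norm principle for quaternion algebras (Eichler) then yields $\alpha_0\in\H^\times$ with $\N(\alpha_0)=x$.

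Next I would modify $\alpha_0$ on the left by a norm-$1$ quaternion so that it lands in $\cO_\H[1/\ell]$, imitating the strategy of Lemma~\ref{l0}. Apply Theorem~\ref{sun} to $G=\H_1^\times$ with $S=\{\ell,\infty\}$; the hypotheses hold because $\ell$ is unramified in $\H$, so $G(\Q_\ell)=\SL_2(\Q_\ell)$ is noncompact. At each prime $q\ne\ell$ impose the open local condition
\[
U_q = \{\beta\in G(\Q_q) : \beta\alpha_0\in\cO_\H\otimes\Z_q\},
\]
which is open in $G(\Q_q)$ since $(\cO_\H\otimes\Z_q)\alpha_0^{-1}$ is a $\Z_q$-lattice in $\H\otimes\Q_q$.

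The crux is verifying $U_q\ne\emptyset$ for every $q\ne\ell$. If $q$ is ramified in $\H$, then $\cO_\H\otimes\Z_q$ is the full valuation ring of the division algebra $\H\otimes\Q_q$, and $v(\alpha_0)=v_q(\N(\alpha_0))=v_q(x)\ge 0$ gives $\alpha_0\in\cO_\H\otimes\Z_q$, so $\beta_q=1$ works. If $q$ is split in $\H$, identify $\H\otimes\Q_q\cong\Mat_{2\times 2}(\Q_q)$ and $\cO_\H\otimes\Z_q\cong\Mat_{2\times 2}(\Z_q)$; factor $\alpha_0=\sigma\cdot\mathrm{diag}(1,x)$ with $\sigma\in\SL_2(\Q_q)$ (valid since $\det(\alpha_0)=x$), so that $\beta_q=\sigma^{-1}$ satisfies $\beta_q\alpha_0=\mathrm{diag}(1,x)\in\Mat_{2\times 2}(\Z_q)$. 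For almost all $q$, $\alpha_0$ is a local unit and $\beta_q=1$ suffices, recovering the standard adelic setup. Strong approximation then produces a global $\beta\in\H_1^\times$ in all $U_q$ simultaneously, and $\alpha\colonequals\beta\alpha_0\in\cO_\H[1/\ell]$ satisfies $\N(\alpha)=\N(\beta)\N(\alpha_0)=x$.

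The main obstacle is verifying local nonemptiness at split primes dividing $x$, where $\alpha_0$ is genuinely non-integral; the elementary $\SL_2(\Q_q)$-decomposition above disposes of this cleanly.
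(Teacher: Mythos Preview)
Your argument is correct. Both you and the paper begin by producing $\alpha_0\in\H^\times$ with $\N(\alpha_0)=x$ and then use strong approximation for $\H_1^\times$ with $S=\{\ell,\infty\}$ to push $\alpha_0$ into $\cO_\H[1/\ell]$, so the underlying mechanism is the same. The packaging differs: the paper first picks an \emph{integral} right $\cO_\H$-ideal $I$ of norm $x$, applies Lemma~\ref{l0} as a black box to the norm-$1$ fractional ideal $\alpha_0^{-1}I$ to obtain a norm-$1$ element $\beta\in\alpha_0^{-1}I\otimes\Z[1/\ell]$, and concludes $\alpha_0\beta\in I\otimes\Z[1/\ell]\subseteq\cO_\H[1/\ell]$. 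You instead bypass both the auxiliary ideal $I$ and Lemma~\ref{l0} by rerunning the strong-approximation argument directly with local conditions $U_q$ tailored to $\alpha_0$; your factorization $\alpha_0=\sigma\cdot\mathrm{diag}(1,x)$ at split primes stands in for the paper's implicit appeal to ``every right ideal is locally principal''. Your route is marginally more self-contained (it does not require knowing that integral ideals of every norm exist) at the cost of essentially reproving Lemma~\ref{l0}; the paper's route is cleaner because it reuses Lemma~\ref{l0} verbatim.
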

\begin{proof}
Let $I$ be an (integral) right ideal of $\cO_\H$ of norm $x$ and
$\alpha$ an element of $\H$ also of norm $x$.  Then $\alpha^{-1}I$ has
norm $1$ and we may apply Lemma \ref{l0} to obtain a
$\beta\in\alpha^{-1}(I\otimes\Z[1/\ell])$ of norm $1$.  Then
$\alpha\beta$ has norm $x$ and $\alpha\beta\in
I\otimes\Z[1/\ell]\subset\cO_\H[1/\ell]$.
\end{proof}

\subsubsection{Consequences for \texorpdfstring{$\MM(\cO_\H)$}{Matg\unichar{"00D7}g(O\unichar{"210D})}}

\begin{lemma}\label{l2}
For any prime $q$ and any Hermitian \mbox{$H\in\MM
  (\cO_\H\otimes\Z_{(q)})$} which is positive definite
of reduced norm $1$, there is a matrix
$M\in\MM (\cO_\H\otimes\Z_{(q)})$ such that $H=M^\dagger M$.
The matrix $M$ satisfies $\Norm M=1$.
\end{lemma}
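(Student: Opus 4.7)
The plan is an induction on $g$, carried out on the slightly strengthened hypothesis that $\Norm(H)$ is a square of a unit in $\Z_{(q)}^\times$ (the lemma being the case $\Norm(H)=1$). Write $R = \cO_\H\otimes\Z_{(q)}$. For the base case $g=1$, $H = (h)$ with $h\in\Z_{(q)}^\times$ positive, so applying Lemma~\ref{l1} at any prime $\ell\ne q$ that is unramified in $\H$ (finitely many primes ramify in $\H$), and clearing denominators as in that lemma's proof, produces $\alpha\in R$ of reduced norm $h$, so that $M=(\alpha)$ works.

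For the inductive step, the critical subtask is to produce $v\in R^g$ with $v^\dagger H v = 1$. After reducing $H$ modulo $q$, the matrix $\bar H$ over the residue ring $\bar R \colonequals \cO_\H/q\cO_\H$ has invertible reduced norm, so it is a nondegenerate hermitian form; the ring $\bar R$ is either $\Mat_2(\F_q)$ (when $q$ is unramified in $\H$) or a length-two local ring with residue field $\F_{q^2}$ (when $q$ is ramified), and in both cases a standard argument---reducing further modulo the Jacobson radical of $\bar R$ to get to the classical theory of hermitian forms over a finite field, and lifting---shows that the induced form represents $1\in\F_q$. Lift a representation $\bar v$ of $1$ to $v_0\in R^g$; since $v_0^\dagger H v_0$ is automatically fixed by the involution and hence in $\Q\cap R = \Z_{(q)}$, the congruence $v_0^\dagger H v_0\equiv 1\pmod q$ forces $u\colonequals v_0^\dagger H v_0\in\Z_{(q)}^\times$. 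Applying the base case to $1/u$ gives $\alpha\in R$ with $\N(\alpha)=1/u$, so that $v\colonequals\alpha v_0\in R^g$ satisfies $v^\dagger Hv = u\,\N(\alpha) = 1$.

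Given $v$, the map $w\mapsto v^\dagger Hw$ is a split right-$R$-linear surjection $R^g\twoheadrightarrow R$ with section $a\mapsto va$, yielding an orthogonal decomposition $R^g = vR\oplus v^\perp$ in which $v^\perp$ is a finitely generated projective $R$-module of rank $g-1$; because $R$ is semiperfect with a unique simple module over $R/J(R)$, finitely generated projectives over $R$ are free, so $v^\perp$ has a basis $w_1,\ldots,w_{g-1}$. Setting $U=[v\mid w_1\mid\cdots\mid w_{g-1}]\in\GL_g(R)$ gives $U^\dagger HU = \operatorname{diag}(1,H')$, with $H'$ a positive definite hermitian $(g-1)\times(g-1)$ matrix over $R$ satisfying $\Norm(H') = \Norm(U)^2\Norm(H)$, again a unit square. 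By the inductive hypothesis $H' = (M')^\dagger M'$ for some $M'$, and then $M\colonequals\operatorname{diag}(1,M')\,U^{-1}\in\MM(R)$ satisfies $M^\dagger M = H$.

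The main obstacle is the first step: constructing a vector $v$ with $v^\dagger Hv = 1$ on the nose. This combines the nondegeneracy of $\bar H$ (forcing representation of units in the residue ring) with the flexibility of scaling afforded by Lemma~\ref{l1}. The ramified case needs a little extra care because the residue ring is not semisimple, but the needed representability of $1$ reduces cleanly to the residue field $\F_{q^2}$, where the classical theory of hermitian forms applies.
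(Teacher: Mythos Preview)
Your approach is essentially the paper's: induct on $g$, find $v$ with $v^\dagger Hv=1$ by first hitting a unit value and then rescaling via Lemma~\ref{l1}, then pass to $v^\perp$. You also fill in details the paper leaves implicit (the strengthened induction hypothesis $\Norm(H)\in(\Z_{(q)}^\times)^2$, and freeness of $v^\perp$).

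One small but genuine slip: the rescaling must be $v=v_0\alpha$ (right multiplication), not $v=\alpha v_0$. With left multiplication one computes $(\alpha v_0)^\dagger H(\alpha v_0)=v_0^\dagger\,\bar\alpha H\alpha\,v_0$, which equals $u\N(\alpha)$ only if $\alpha$ commutes with the entries of $H$; with right multiplication the centrality of $u=v_0^\dagger Hv_0\in\Z_{(q)}$ gives $(v_0\alpha)^\dagger H(v_0\alpha)=\bar\alpha\,u\,\alpha=u\N(\alpha)$ as desired. Separately, the detour through representing $1$ modulo $q$ is unnecessary: all you actually use is that $u$ is a unit, and the paper just observes directly that since $\Norm(H)$ is a unit, not every $v^\dagger Hv$ can be divisible by $q$.
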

\begin{proof}
Since $H$ has reduced norm $1$, there exists some
$v\in(\cO_\H\otimes\Z_{(q)})^g$ such that $x=v^\dagger Hv\in\Z_{(q)}$ satisfies
$x\notin q\Z_{(q)}\subset \Z_{(q)}$.
By positive-definiteness $x>0$ and after scaling $v$
we may assume $x^{-1}$ is an integer.  Then by applying Lemma \ref{l1}
for $\ell$ away from $q$ and the ramified primes of $\H$ there exists
an $\alpha\in\cO_\H \otimes\Z_{(q)}$ of norm $x^{-1}$.

The proof is by induction on $g$.  The assertion is trivial for $g=1$:
here $H=M=1\in \cO_\H\otimes \Z_{(q)}$. For a general $g$, 
let $v_1=v\alpha$ as above.  Note that $v_1^\dagger Hv_1=1$ 
and consider $\langle v_1\rangle^\bot=\{w\in(\cO_\H\otimes \Z_{(q)})^g\mid
v_1^\dagger H w=0\}$. The Hermitian form defined by $H$ restricts to 
a positive definite Hermitian form of reduced norm $1$ on 
$\langle v_1\rangle^\bot$, so
we're reduced
to showing the theorem on $\langle v_1\rangle^\bot\cong
(\cO_\H\otimes \Z_{(q)})^{g-1}$.

Finally we have
\[
1=\Norm(H)=\Norm(M^\dagger)\Norm(M)=\Norm(M)^2,
\]
so $\Norm(M)=1$ since $\Norm(M)$ is positive.
\end{proof}

\subsection{The quaternionic unitary group}
\label{quat}

If $B$ is an algebra with anti-involution having fixed ring $R$
and $M^\dagger$ is the conjugate-transpose defined using the 
anti-involution for $M\in\MM(B)$, set
\begin{equation}
\label{wet}
\begin{split}
\rU_g(B) & = \{M\in\MM(B)\mid M^\dagger M=\Id_{g\times g}\}\\
\GU_g(B)  & =\{M\in\MM(B)\mid M^\dagger M=\lambda \Id_{g\times g}\text{ with }\lambda\in R^\times\}.
\end{split}
\end{equation}
For a Hermitian matrix $H\in\Mat_{g\times g}(\cO_\H)$ 
set
\[
\rU_{H}(\cO_\H)=\{M\in\MM(\cO_\H)\mid M^\dagger H M=H\}.
\]
Let $H_0\in\msH_{g,1}(\cO_\H)$ be the Hermitian matrix $\Id_{g\times g}$.
Then $\rU_{H_0}(\cO_\H)=\rU_g(\cO_\H)$ as in \eqref{wet}.

Let $L\subset\H^g$ be a finitely
generated right $\cO_\H$-submodule such that $L\otimes\Q\cong \H^g$.
Such an $L$ is {\sf{principally polarized}} 
if there exists a $c\in\Q^\times$ such that $cH_0$
restricted to $L$ is $\cO_\H$-valued and unimodular.  
We define the {\sf{dual}} of $L$ to be 
\begin{equation}
\label{stuffing2}
\widehat{L}=c^{-1}L.
\end{equation}

\begin{remark1}\label{hat1}
{\rm
Notice that this agrees with the standard definition of dual with
respect to a pairing; thus, if $L\subset L'$ then
$\widehat{L'}\subset\widehat{L}$ and
$[L':L]=[\widehat{L}:\widehat{L'}]$.
}
\end{remark1}

\begin{theorem}
\label{pumpkin}
For $M\in\GU_g(\Hh)$, set $\gamma(M)$ equal to the principally
polarized right $\cO_\H$-submodule of $\H^g$ given by
$\gamma(M)=M\cO_\Hh^g\cap\H^g$. The association $M\mapsto \gamma(M)$
induces a one-to-one correspondence between $\GU_g(\Hh)/\GU_g(\cO_\Hh)$
and the set of principally polarized right $\cO_\H$-submodules of $\H^g$.
\end{theorem}
\begin{proof}
  The  module $\gamma(M)$ is principally
polarized since after tensoring with $\widehat{\Z}$, the Hermitian
form is given by $M^\dagger M$ which is 
the identity times a scalar in $\widehat{\Q}^\times$, which can be approximated
by an element of $\Q^\times$.

This map is well defined since if $MU$ with $U\in\GU_g(\cO_\Hh)$ is another
representative of the same class in $\GU_g(\Hh)/\GU_g(\cO_\Hh)$, then
$U\cO_\Hh^g=\cO_\Hh^g$.  Hence
$MU\cO_\Hh^g\cap\H^g=M\cO_\Hh^g\cap\H^g$.
It is injective since if
$M\cO_\Hh^g\cap\H^g=M'\cO_\Hh^g\cap\H^g$, we must have $MN=M'$ for
some $N\in\GL_g(\cO_\Hh)$.  But we also have
$N=M'M^{-1}\in\GU_g(\Hh)$.  Therefore,
$$N\in\GL_g(\cO_\Hh)\cap\GU_g(\Hh)=\GU_g(\cO_\Hh),$$ and $[M]=[M']$.

Finally, to see that this map is surjective, let $L$ be a principally
polarized right $\cO_\H$-submodule.  Since all finitely generated
modules over $\cO_\H$ are locally free, $L$ is given by
$N\cO_\Hh^g\cap\H^g$ for some $N\in\GL_g(\Hh)$.  The Hermitian form on
$L\otimes\widehat{\Z}$ is given by $N^\dagger N$, and since $L$ is
principally polarized, $cN^\dagger N$ is $\cO_\Hh$-valued and
unimodular for some $c\in\Q^\times$.  However, since all integral
unimodular Hermitian forms are locally trivial 
(as follows, for example, from Lemma \ref{l2}),
 there exists a
$V\in\GL_g(\cO_\Hh)$ such that $V^\dagger cN^\dagger NV$ is the
identity.  So we can set $M=NV$ and have $M\in\GU_g(\Hh)$ with
$M\cO_\Hh^g\cap\H^g=N\cO_\Hh^g\cap\H^g=L$.
\end{proof}

\begin{definition1}
\label{later}
{\rm 
We define the {\sf{classes}} of $\GU_g(\cO_\H)$, denoted
$\cP_{\!g}(\cO_\H)$, to be the equivalence classes of principally polarized
right $\cO_\H$-submodules of $\H^g$ up to left multiplication by $\GU_g(\H)$.
Hence there is a one-to-one correspondence between 
$\GU_g(\H)\backslash \GU_g(\Hh)/\GU_g(\cO_\Hh)$ and 
$\cP_{\!g}(\cO_\H)$ induced by the map $\gamma$ of Theorem \ref{pumpkin}:
\begin{equation}
\label{feeler}
\cP_{\!g}(\cO_\H)\cong \GU_g(\H)\backslash \GU_g(\Hh)/\GU_g(\cO_\Hh).
\end{equation}
There are a finite  number of classes of $\GU_g(\cO_\H)$.
We will call $\#\cP_{\!g}(\cO_\H)$ the {\sf{class number}}.  It is
independent of the choice of maximal order $\cO_\H$ since the
  isomorphism class of $\cO_\Hh$ is independent of $\cO_\H$ and
consequently we denote it by $h_g(\H)$. Let the principally polarized
right $\cO_\H$-module $L_i$ be a representative of the class $[L_i]\in
\cP_g(\cO_\H)$ for $1\leq i\leq h=h_g(\H)$.  }
\end{definition1}

\begin{remark1}\label{hat2}
{\rm
  Notice that $L$ and $\widehat{L}$ belong to the same class.  Also
  note that for $U\in\GU_g(\H)$ we have
  $\widehat{UL}=(U^{-1})^\dagger\widehat{L}$.
}
\end{remark1}

\noindent 
When $g=1$ we recover the standard description of the ideal classes
$\cP_1(\cO_\H)$ 
and the class number $h(\H)$:
\begin{equation}
\label{bonus}
\cP_1(\cO_\H)\cong \H^\times\backslash \Hh^\times/\cO_\Hh^\times\quad\text{and}\quad
h_1(\H)=h(\H)=\# \,\H^\times\backslash \Hh^\times/\cO_\Hh^\times ,
\end{equation}
cf. \cite[\S 3.5.B]{V}.

\begin{theorem}\label{t2}
If $g>1$, then  $\omsH_{\!\!g,1}(\cO_\H)$ is in
one-to-one correspondence with the classes of $\GU_g(\cO_\H)$, or
equivalently with the double
cosets $$\GU_g(\H)\bs\GU_g(\Hh)/\GU_g(\cO_\Hh).$$
\end{theorem}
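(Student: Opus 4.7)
The plan is to construct a map $\Phi\colon \overline{\msH}_{g,1}(\cO_\H) \to \cP_g(\cO_\H)$ by realizing each unimodular positive-definite hermitian form as the Gram matrix of a principally polarized submodule of $(\H^g, H_0)$, and then to verify bijectivity. Concretely, given $H \in \msH_{g,1}(\cO_\H)$, the idea is to find $N \in \GL_g(\H)$ with $N^\dagger N = H$ and put $L_H := N\cO_\H^g \subseteq \H^g$; the restriction of $H_0 = \Id$ to $L_H$ then has Gram matrix $N^\dagger N = H$ in the basis $\{Ne_i\}$, which is $\cO_\H$-valued and unimodular, so $L_H$ is principally polarized with $c = 1$, producing a class $[L_H] \in \cP_g(\cO_\H)$.

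Producing $N$ globally is the main technical point. Lemma \ref{l2} supplies, at each finite prime $q$, a local factorization $N_q \in \GL_g(\cO_{\H_q})$ with $N_q^\dagger N_q = H$, while at the archimedean place positive-definiteness of $H$ permits a Cholesky-style factorization over $\H_\R$. These local factorizations assemble into a global $N \in \GL_g(\H)$ by the Hasse principle for hermitian forms over the definite rational quaternion algebra $\H$; this local-to-global patching is where the main difficulty lies.

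Well-definedness of $\Phi$ is then formal. Two choices $N_1, N_2$ for the same $H$ differ by $V := N_2 N_1^{-1} \in \U_g(\H) \subseteq \GU_g(\H)$ (since $V^\dagger V = \Id$), so $N_2\cO_\H^g = V\cdot N_1\cO_\H^g$ represents the same class in $\cP_g(\cO_\H)$; and replacing $H$ by $U^\dagger H U$ for $U \in \SL_g(\cO_\H) = \GL_g(\cO_\H)$ corresponds to $N \leadsto NU$, leaving $L_H = NU\cO_\H^g = N\cO_\H^g$ unchanged. So $\Phi$ descends to $\overline{\msH}_{g,1}(\cO_\H)$.

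For bijectivity: given $[L] \in \cP_g(\cO_\H)$, choose a right $\cO_\H$-basis of $L$ so that $L = T\cO_\H^g$ for some $T \in \GL_g(\H)$; then $H := c T^\dagger T \in \msH_{g,1}(\cO_\H)$ (with $c$ the polarization scaling constant) satisfies $\Phi([H]) = [L]$, proving surjectivity. For injectivity, if $L_{H_2} = U L_{H_1}$ with $U \in \GU_g(\H)$ and $U^\dagger U = \lambda\Id$, comparing bases yields $N_2 = U N_1 W$ with $W \in \GL_g(\cO_\H)$, whence $H_2 = \lambda W^\dagger H_1 W$. The normalization $\HNm(H_1) = \HNm(H_2) = 1$ together with $\lambda > 0$ forces $\lambda = 1$, so $[H_1] = [H_2]$ in $\overline{\msH}_{g,1}(\cO_\H)$. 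Beyond the Hasse-principle step, the remaining checks are routine.
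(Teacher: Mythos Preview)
Your construction of $\Phi$ and the well-definedness and injectivity checks are sound, and the overall architecture parallels the paper's. The genuine gap is in surjectivity: you write ``choose a right $\cO_\H$-basis of $L$ so that $L = T\cO_\H^g$,'' but a principally polarized lattice $L \subset \H^g$ is a priori only \emph{locally} free over the maximal order $\cO_\H$, and global freeness is not automatic. For $g = 1$ non-free $L$'s exist whenever $h(\H) > 1$ (and indeed the bijection fails for $g = 1$ in that case). For $g > 1$ freeness of every such $L$ does hold, but establishing it is essentially the content of strong approximation for $\SL_g(\H)$ with $S = \{\infty\}$ (Theorem~\ref{sun}), which needs $G_\infty$ noncompact and hence $g > 1$. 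This is exactly where the paper's proof does its real work: given $[V]$, after normalizing to $V \in \U_g(\Hh) \subset \SL_g(\Hh)$, strong approximation yields $M \in \SL_g(\H)$ with $V^{-1}M \in \SL_g(\cO_\Hh)$, whence $\kappa(V) = M\cO_\H^g$ is visibly free and $\iota([M^\dagger M]) = [V]$. Your argument would be complete if you invoked strong approximation (or the equivalent freeness statement) at this step, but as written the main difficulty is hidden inside ``choose a basis.''

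A smaller point: the Hasse-principle patching you propose for the global factorization $N^\dagger N = H$ is more than you need. Lemma~\ref{l2} is stated over the \emph{localization} $\cO_\H \otimes \Z_{(q)}$, which already sits inside $\H$; a single application at any one prime $q$ directly produces a global $N \in \MM(\H)$ with $N^\dagger N = H$, with no local-to-global assembly required. The paper uses it exactly this way.
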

\begin{proof}
We first define the map
\begin{equation}
\label{squash}
\iota: \omsH_{\!\!g,1}(\cO_{\H}) \to \GU_g(\H)\bs\GU_g(\Hh)/\GU_g(\cO_\Hh)
\end{equation}
by the following procedure: for $[H]\in\omsH_{\!\!g,1}(\cO_\H)$ write $H=M^\dagger
M$ for $M\in\SL_g(\H)$, such an $M$ exists by Lemma \ref{l2}. For
each prime $q$ write $H=N_q^\dagger N_q$ with
$N_q\in\SL_g(\cO_\H\otimes\Z_q)$ (again these exist by Lemma
\ref{l2}). Let $N=(N_q)\in\SL_g(\cO_\Hh)$, and notice that
$(MN^{-1})^\dagger MN^{-1}=I$, so $MN^{-1}\in\rU_g(\Hh)$. 
Set $\iota(H)=MN^{-1}$ and define the map $\iota$ in \eqref{squash}
by
\begin{equation}
\label{iota}
  \iota([H])=[\iota(H)]=[MN^{-1}].
\end{equation}

We must now prove that the map  $\iota$ in \eqref{iota}
is well defined, injective, and
surjective.\\
\emph{Well-definedness:} Suppose $M'\in\SL_g(\H)$ is another choice of
$M$ and $N'\in\SL_g(\cO_\Hh)$ another choice of $N$ satisfying
$H={M'}^\dagger M'={N'}^\dagger N'$.  Then $M'M^{-1}\in\rU_g(\H)$ and
$N{N'}^{-1}\in\rU_g(\cO_\Hh)$.  Hence
$$[M'{N'}^{-1}]=[(M'M^{-1})(M{N}^{-1})(N{N'}^{-1})]$$ corresponds to the
same class as $[M{N}^{-1}]$ in
$\GU_g(\H)\bs\GU_g(\Hh)/\GU_g(\cO_\Hh)$.

Now suppose $H'\in\msH_{g,1}(\cO_\H)$ is another representative of the same
class as $H$ in $\omsH_{\!\!g,1}(\cO_\H)$, i.e., $H'=U^\dagger HU$ for some
$U\in\SL_g(\cO_\H)$.  Thus if $H=M^\dagger M=N^\dagger N$ with
$M\in\SL_g(\H)$ and $N\in\SL_g(\cO_\Hh)$, then $$H'=(MU)^\dagger
MU=(NU)^\dagger NU$$
with $MU\in\SL_g(\H)$, $NU\in\SL_g(\cO_\Hh)$,
and $MU(NU)^{-1}=MN^{-1}$.

\emph{Injectivity:} Suppose $\iota([H])=\iota([H'])$.  Let
$H=M^\dagger M=N^\dagger N$ and $H'=M'^\dagger M'=N'^\dagger N'$ with
$M,M'\in\SL_g(\H)$ and $N,N'\in\SL_g(\cO_\Hh)$.  Thus
$MN^{-1}=VM'{N'}^{-1}W^{-1}$ with $V\in\GU_g(\H)$ and
$W\in\GU_g(\cO_\Hh)$.  Set $V^\dagger V=vI$ for $v\in\Q^\times$ and
$W^\dagger W=wI$ for $w\in\widehat{\Z}^\times$.  Let
$U=M^{-1}VM'=N^{-1}WN'$ and observe that
$U\in\GL_g(\H)\cap\GL_g(\cO_\Hh)=\SL_g(\cO_\H)$.  Now
$$H\cdot
U=U^\dagger H U=(M^{-1}VM')^\dagger M^\dagger MM^{-1}VM'={M'}^\dagger
V^\dagger V M'=vH' ,$$
and a similar argument shows $H\cdot U=wH'$.
Hence $v=w\in\Q^\times\cap\widehat{\Z}^\times=\Z^\times$. So $v=\pm1$
and we can rule out $-1$ since $\H$ is definite.  Thus $[H]=[H']$.

\emph{Surjectivity:} Let
$$[V]\in\GU_g(\H)\bs\GU_g(\Hh)/\GU_g(\cO_\Hh)$$
with $V\in\GU_g(\Hh)$.  Put $V^\dagger V=vI$ for $v\in\widehat{\Q}$.
Put $v=ab$ with $a\in\Q_{>0}$ and $b\in\widehat{\Z}^\times$.  Then
by Lemma \ref{l1} there exist $\alpha\in\H$ with $\N(\alpha)=a$ and
$\beta\in\cO_\Hh^\times$ with $N(\beta)=b$.  After replacing $V$ with
$\alpha^{-1}V\beta^{-1}$ we may assume
$V\in\rU_g(\Hh)\subset\SL_g(\Hh)$.

We will apply strong approximation to $G=\SL_g(\H)$ with
$S=\{\infty\}$.  Note that $G_{\infty}$ is not compact for $g>1$ and
hence the pair $(G, S)$ satisfies the conditions of Theorem \ref{sun}.
The local
conditions we will impose at each prime $q$ will be
$V^{-1}M\in\SL(\cO_\H\otimes\Z_q)$.  These are the standard conditions
away from finitely many primes and are trivially nonempty since $V$
always satisfies them.

Hence there exists $M\in\SL_g(\H)$ such that
$N=V^{-1}M\in\SL_g(\cO_\Hh)$.  Thus $\iota([M^\dagger M])=[V]$.
\end{proof}

\section{Brandt matrices}
\label{br}

\subsection{Definition of Brandt matrices}
\label{br1}
Set $h_g\colonequals h_g(\H)$ with
$\cP_g(\cO_\H)=\{[L_1],\ldots,[L_{h_g}]\}$
for principally polarized right $\cO_\H$-modules
$L_1,\ldots , L_{h_g}\subseteq \H^g$. We can now define the
{\sf Brandt matrix} $B_g(n)\in\Mat_{h_g\times h_g}(\Z)$ for a natural number $n$.
\begin{definition1}
\label{Brandt}
{\rm
Let $g\geq 1$ and $n\in\NN$.
For $1\leq j\leq h_g(\H)\equalscolon h_g$, set 
\begin{align}
\nonumber E_j(g) &\colonequals \{U\in\GU_g(\H)\mid L_j=UL_j\},\\
\nonumber e_j(g) &\colonequals \# E_j(g),\\
\label{dubious}
\widetilde{\mathbf{B}}_g(n)_{ij}&\colonequals 
\{U\in\GU_g(\H)\mid [L_i:UL_j]=n^{2g}\},\\
\nonumber E(U)&\colonequals \{V\in\GU_g(\H)\mid VL_i=L_i\text{ and }
VUL_j=UL_j\}\text{ for } U\in\widetilde{\mathbf{B}}_g(n)_{ij},\\
\nonumber e(U)&\colonequals \# E(U)\text{ for } 
U\in\widetilde{\mathbf{B}}_g(n)_{ij}.
\end{align}
The sets $E_j(g),\, \widetilde{\mathbf{B}}_g(n)_{ij}, \,E(U)$
above depend on the choice of representatives $L_1,\ldots , L_{h_g}$.
However, they change by an explicit one-to-one
correspondence if we change the representatives: if $\tilde{L}_j=W_jL_j$
for $W_j\in\GU_g(\H)$, then in \eqref{dubious} the $W_j$ can be
absorbed into the $U$. In particular $\#\widetilde{\mathbf{B}}_g(n)_{ij }$
and $\#E_j(g)$ do not depend on the choice of representatives for
$\cP_g(\cO_\H)$.

Note that $E_j(g) \leq \GU_g(\H)$ and $E(U)\leq E_i(g)$ 
for $U\in\widetilde{\mathbf{B}}_g(n)_{ij}$.
Define the equivalence relation $\sim_b$ on 
$\widetilde{\mathbf{B}}_g(n)_{ij}$
by $U\sim_b U'$ if $U=U'V$ for some $V\in E_j(g)$. 
Likewise define the equivalence relation $\sim_l$ on
$\widetilde{\mathbf{B}}_g(n)_{ij}$ by
$U\sim_l U'$ if $U=V_iU'V_j$ for some $V_j\in E_j(g)$
and some $V_i\in E_i(g)$.
Define the big quotient
\begin{equation}
\label{stuffing}
\mathbf{B}_g(n)_{ij}^{\rm big}=\widetilde{\mathbf{B}}_g(n)_{ij}/\!\sim_b
\end{equation}
and the little quotient
\begin{equation}
\label{stuffing1}
\mathbf{B}_g(n)_{ij}^{\rm little}=\widetilde{\mathbf{B}}_g(n)_{ij}/\!\sim_l.
\end{equation}
Let
$[U]_b\in\mathbf{B}_g(n)_{ij}^{\rm big}$,
$[U]_l\in\mathbf{B}_g(n)_{ij}^{\rm little}$ 
be the equivalence classes of
$U\in\widetilde{\mathbf{B}}_g(n)_{ij}$.
An equivalent definition is
\begin{equation}
\label{stuffing4}
\mathbf{B}_g(n)_{ij}^{\rm big}=\{L'_j\mid [L_i:L_j']=n^{2g} \text{ and } UL_j=L_j'
\text{ for some }U\in\GU_g(\HH)\}.
\end{equation}
We define an equivalence relation $\sim$ on 
$\mathbf{B}_g(n)_{ij}^{\rm big}$ in \eqref{stuffing4} 
by $L'_{j_1}\sim L'_{j_2}$ 
if there exists $U\in\GU_g(\H)$ such that $UL_i=L_i$
and  $UL'_{j_1}=L'_{j_2}$ ;
write $[L'_j]_l$ for the equivalence class of 
$L_j'\in\mathbf{B}_g(n)_{ij}^{\rm big}$.
Then we equivalently have
\begin{equation}
\label{stuffing5}
\mathbf{B}_g(n)_{ij}^{\rm little}=\left(\mathbf{B}_g(n)_{ij}^{\rm big}/\!\sim\right) =
\left\{[L'_j]_l\mid L'_j\in \mathbf{B}_g(n)_{ij}^{\rm big}\right\} .
\end{equation}

For $1\leq i,j\leq h_g$, put
\begin{equation}
\label{rose}
  B_g(n)_{ij}= \#\mathbf{B}_g(n)_{ij}^{\rm big}=
\frac{\#\widetilde{\mathbf{B}}_g(n)_{ij }}
  {\#E_j(g)}\quad\text{and}\quad B_g(0)_{ij}=1/e_j(g).
\end{equation}
The matrices $B_g(n)$ do not depend
on the choice of maximal order $\cO_\H$ or on the choice of
representatives for $\cP_g(\cO_\H)$, up to the obvious
indeterminacy of simultaneously permuting the rows and columns.
Let $\BB_g(\cO_{\HH})\subseteq \Mat_{h_g\times h_g}(\Z)$
be the $\Z$-algebra generated by $B_g(n)$, $n\geq 1$.
The $\Z$-algebra $\BB_g(\cO_{\HH})$ does not depend on the choice
of maximal order $\cO_{\HH}$ and hence we can denote it as
$\BB_g=\BB_g(\HH)$.
}
\end{definition1}

\begin{remark1}
\label{Brandt well}
{\rm
In the classical case $g=1$,
$h=h_1(\HH)$ is the class number of $\H$.
Let $I_1$, \dots, $I_h$ be representatives for the right $\cO_\H$-ideal
classes and let $\OO_i$ be the left order of $I_i$, $1\leq i\leq h$.
We have  $e_i=e_i(1)=\#\OO_i^\times$.
Definition \ref{Brandt} in the
special case $g=1$ gives 
\begin{align*}
\label{duck}
E_j(1)&=\cO_j^\times,\\
e_j\colonequals e_j(1)&=\#\cO_j^\times,\\
\widetilde{\mathbf{B}}_1(n)_{ij}&=
\{\lambda\in I_iI_j^{-1}\mid\Norm\lambda=n\Norm(I_iI_j^{-1})\},\\
B_1(n)_{ij}&=\frac{\#\{\lambda\in I_iI_j^{-1}\mid\Norm\lambda=
n\Norm(I_iI_j^{-1})\}}{e_j},\\
E(\lambda)&=\{u\in\cO_i^\times\mid \lambda^{-1}u\lambda\in\cO_j^\times\}\quad\text{ for }
\lambda\in \widetilde{\mathbf{B}}_1(n)_{ij},\\
e(\lambda)&=\# E(\lambda)\quad\text{ for }\lambda\in 
\widetilde{\mathbf{B}}_1(n)_{ij}.
\end{align*}
In particular, $B_1(1)=\Id_{h\times h}$ and $B_1(n)\in\Mat_{h\times h}(\Z)$ for 
$n\geq 1$.
The Brandt matrix $B_1(0)$ is $B_1(0)_{ij}=1/e_j$ and 
$\BB_1=\BB_1(\HH)\subseteq\Mat_{h\times h}(\Z)$ is the $\Z$-algebra
generated by the Brandt matrices $B_1(n)$, $n\geq 1$.
}
\end{remark1}

By Theorem \ref{pumpkin}, 
\[
\#\omsH_{\!\!g,1}(\cO_\H)=h_g(\H)\equalscolon h_g=\#\cP_g(\cO_\H).
\]
Write
\begin{align}
\omsH_{\!\!g,1}(\cO_\H)&=\{[H_1],\ldots ,[H_{h_g}]\}\quad\textup{for
$H_i\in\msH_{\!\!g,1}$, $1\leq i\leq h_g$,  and}\label{bid}\\
\cP_g(\cO_\H)&=\{[L_1],\ldots, [L_{h_g}]\}\quad\textup{with
  $L_i$ a principally polarized right $\cO_\H$-submodule of $\H^g$}.
\nonumber
\end{align}
First we give an equivalent definition of the Brandt matrix in terms of
$\omsH_{\!\!g,1}(\cO_\H)$ in case $g>1$. It is convenient to make the
following definition.
\begin{definition1}
  \label{groan}
  Suppose $H, H'\in\omsH_{\!\!g,1}(\cO_\H)$.  For a natural number $n$
  set
\begin{align*}
  \mathbf{U}_n(H,H')& \colonequals \{M\in \Mat_{g\times g}(\cO_\H)\mid
  M^\dagger H M=nH'\}\text{ and}\\
  \mathbf{U}(H)&\colonequals \mathbf{U}_1(H,H).
\end{align*}
Note that $\mathbf{U}(H)$ acts on $\mathbf{U}_n(H,H')$ by
multiplication on the left and $\mathbf{U}(H')$ acts on $\mathbf{U}_n(H,H')$
by multiplication on the right. Define an equivalence relation
$\sim_b$ on $\mathbf{U}_n(H, H')$ by $M\sim_bMU'$ for $U'\in
\mathbf{U}(H')$ and set
$\mathbf{U}_n(H, H')^{\text{\rm big}}\colonequals \mathbf{U}_n(H,H')/
\mathord{\sim}_b$
with $[M]\in\mathbf{U}_n(H, H')^{\text{\rm big}}$
the class defined by $M\in\mathbf{U}_n(H, H')$.
Define an equivalence relation $\sim_l$ on $\mathbf{U}_n(H,H')^{\text{\rm big}}$
by $[M]\sim_l [UM]$ for $U\in\mathbf{U}(H)$.
Set $\mathbf{U}_n(H,H')^{\text{\rm little}}\colonequals \mathbf{U}_n(H,H')^
{\text{\rm big}}/\mathord{\sim}_l$.
\end{definition1}
\begin{theorem}\label{Brandt2}

  Let $g>1$ with $\gamma$ as in Theorem \textup{\ref{pumpkin}} and 
$\iota$ as in \eqref{iota}.
If $[\gamma(\iota([H_i]))]=[L_i]$ and
$[\gamma(\iota(H_j))]=[L_j]$, then we have equivalently
\begin{align*}
  \mathbf{B}_g(n)_{ij}^{\text{\rm big}} & =\mathbf{U}_n(H_i,H_j)^{\text{\rm big}}
    \text{  and}\\
    \mathbf{B}_g(n)_{ij}^{\text{\rm little}} & = \mathbf{U}_n(H_i,H_j)^
           {\text{\rm little}}.
\end{align*}
In particular we have

\begin{align*}
e_j(g)& =\#\mathbf{U}(H_j) \text{ and}\\
B_g(n)_{ij} & = \mathbf{B}_g(n)_{ij}^{\text{\rm big}}
=\frac{\#\mathbf{U}_n(H_i, H_j)}
{e_j(g)}
\end{align*}
  for $n\geq 1$.
\end{theorem}
It clearly suffices prove the following lemma.

\begin{lemma}
\label{yeast}
  Let $g>1$.  Choose any $[H_1],\,[H_2]\in\omsH_{\!\!g,1}(\cO_\H)$.  Let
  $[\gamma(\iota(H_k))]=[L_k]$ for $k\in\{1,2\}$.  There exists a
  bijective correspondence between $\{U\in\GU_g(\H)\mid
  [L_1:UL_2]=n^{2g}\}$ and $\{B\in\MM(\cO_\H)\mid B^\dagger
  H_1B=nH_2\}$.
\end{lemma}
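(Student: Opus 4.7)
The plan is to reduce the bijection to a purely linear-algebraic statement by choosing clever representatives of the classes $[L_k]$. Following the proof of Theorem~\ref{t2}, I use Lemma~\ref{l2} to factor $H_k = M_k^\dagger M_k$ with $M_k\in\SL_g(\H)$ and $H_k = N_k^\dagger N_k$ with $N_k\in\SL_g(\cO_\Hh)$ for $k=1,2$, so that $\kappa(\iota(H_k)) = M_kN_k^{-1}\cO_\Hh^g\cap\H^g$. Since $N_k^{-1}\cO_\Hh^g = \cO_\Hh^g$ and multiplication by $M_k^{-1}\in\GL_g(\H)$ preserves $\H^g$ inside $\Hh^g$,
\[
M_k^{-1}\kappa(\iota(H_k)) = \cO_\Hh^g\cap\H^g = \cO_\H^g,
\]
i.e., $\kappa(\iota(H_k)) = M_k\cO_\H^g$. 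By the invariance of both counts in the lemma under change of representative within a class (Definition~\ref{Brandt}), I take $L_k := M_k\cO_\H^g$; this is a free right $\cO_\H$-module whose hermitian form inherited from $H_0 = I$ has Gram matrix $H_k$ in the basis $M_ke_1,\ldots,M_ke_g$.

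With these representatives fixed, I define the mutually inverse maps
\[
U\longmapsto B\colonequals M_1^{-1}UM_2,\qquad B\longmapsto U\colonequals M_1BM_2^{-1}
\]
on $\GL_g(\H)$. The condition $UL_2 = UM_2\cO_\H^g\subseteq M_1\cO_\H^g = L_1$ is equivalent to $B\cO_\H^g\subseteq\cO_\H^g$, i.e., $B\in\MM(\cO_\H)$. If $U^\dagger U = \mu I$ for some $\mu\in\Q^\times_{>0}$, then using $M_k^\dagger M_k = H_k$ gives
\[
B^\dagger H_1 B = M_2^\dagger U^\dagger (M_1^{-1})^\dagger H_1 M_1^{-1} U M_2 = M_2^\dagger U^\dagger U M_2 = \mu H_2.
\]
The reverse direction is parallel: from $B^\dagger H_1 B = nH_2$ one computes $U^\dagger U = M_2^{-\dagger}(nH_2)M_2^{-1} = nI$, so $U\in\GU_g(\H)$, and $UL_2 = M_1B\cO_\H^g\subseteq L_1$ because $B\in\MM(\cO_\H)$.

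What remains is to match the scalar $\mu$ with the index $n^{2g}$. Since $\Norm$ is a multiplicative polynomial of degree $2g$ on $\MM(\H)$, and $\Norm(H_k) = \HNm(H_k) = 1$, applying $\Norm$ to $B^\dagger H_1 B = \mu H_2$ yields $\Norm(B)^2 = \mu^{2g}$, hence $|\Norm(B)| = \mu^g$. On the other hand, the standard index formula for lattices over a maximal order in a definite quaternion algebra gives
\[
[L_1:UL_2] = [\cO_\H^g:B\cO_\H^g] = |\Norm(B)|^2 = \mu^{2g}.
\]
Thus $[L_1:UL_2] = n^{2g}$ is equivalent to $\mu = n$, completing the bijection. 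The main item to be careful about is matching normalizations: the identities $\Norm(H) = \HNm(H)$ for positive-definite hermitian $H$ and $[\cO_\H^g:B\cO_\H^g] = |\Norm(B)|^2$ for $B\in\MM(\cO_\H)\cap\GL_g(\H)$ both follow from local Smith normal form over $\cO_\H$, but they should be stated precisely before invocation so that the exponents come out right.
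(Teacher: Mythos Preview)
Your proof is correct and uses the same bijection $B\leftrightarrow U=M_1BM_2^{-1}$ as the paper, but you streamline the argument by observing directly that $\kappa(\iota(H_k))=M_k\cO_\H^g$ (since $N_k^{-1}\cO_\Hh^g=\cO_\Hh^g$ and $M_k\in\GL_g(\H)$ carries $\cO_\Hh^g\cap\H^g=\cO_\H^g$ to $M_k\cO_\Hh^g\cap\H^g$). With this identification the index becomes $[\cO_\H^g:B\cO_\H^g]=|\Norm(B)|^2$, and you read off $\mu=n$ at once. The paper instead introduces the adelic companion $W_B=N_1BN_2^{-1}\in\MM(\cO_\Hh)$, computes $[\cO_\Hh^g:W_B\cO_\Hh^g]=n^{2g}$, transports this via the identity $U_BV_2=V_1W_B$, and then intersects with $\H^g$; your route bypasses this detour.

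Two small points of care. First, the identity you invoke is $\Norm(H)=\HNm(H)^2$ for positive-definite hermitian $H$, not $\Norm(H)=\HNm(H)$; this is harmless here since $\HNm(H_k)=1$ gives $\Norm(H_k)=1$ regardless, and in any case $\Norm(H_k)=\Norm(M_k^\dagger)\Norm(M_k)=1$ follows directly from your factorization together with $\Norm(M^\dagger)=\Norm(M)$. Second, the step $\Norm(B)^2=\mu^{2g}$ implicitly uses $\Norm(B^\dagger)=\Norm(B)$; this is standard (the anti-involution becomes an adjoint after splitting and preserves determinants), but worth stating since you flag the normalization issues yourself.
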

\begin{proof}
  For $k\in\{1,2\}$, let $[V_k]=\iota([H_k])$ with $L_k=\gamma(V_k)$.
  Let $H_k=M_k^\dagger M_k=N_k^\dagger N_k$, with $M_k\in\SL_g(\H)$,
  $N_k\in\SL_g(\cO_\Hh)$, and $V_k=M_kN_k^{-1}\in\rU_g(\Hh)$ as in
  (\ref{iota}).

  For $B\in\MM(\cO_\H)$ with $B^\dagger H_1B=nH_2$, let
  $U_B=M_1BM_2^{-1}$.  Notice that $B^\dagger M_1^\dagger
  M_1B=nM_2^\dagger M_2$ so $U_B^\dagger U_B=n\Id_{g\times g}$ and
  $U_B\in\GU_g(\H)$.  Similarly take $W_B=N_1BN_2^{-1}$ and observe
  that $W_B\in\GU_g(\Hh)$ and $W_B\in\MM(\cO_\Hh)$. Taking reduced
  norms, we get $\Norm(W_B)=n^g$. 
  Therefore, $W_B\cO_\Hh^g\subset\cO_\Hh^g$ with
  \begin{equation}\label{ind 1}
  [\cO_\Hh^g:W_B\cO_\Hh^g]=n^{2g}.
  \end{equation}
  Notice that $U_BV_2=M_1BN_2^{-1}=V_1W_B$.  Apply this to \eqref{ind
    1} gives
  $n^{2g}=[V_1\cO_\Hh^g:V_1W_B\cO_\Hh^g]=[V_1\cO_\Hh^g:U_BV_2\cO_\Hh^g]$.
  And intersecting with $\H^g$ gives $n^{2g}=[L_1:U_BL_2]$.

  The correspondence $B\mapsto U_B$ is clearly well-defined and
  injective.  We will now show it is surjective.  Given
  $U\in\GU_g(\H)$ with $[L_1:UL_2]=n^{2g}$, hence tensoring with
  $\cO_\Hh$ we see $[V_1\cO_\Hh^g:UV_2\cO_\Hh^g]=n^{2g}$.  Let
  $B=M_1^{-1}UM_2$ and $W=N_1BN_2^{-1}$, so $UV_2=M_1BN_2^{-1}=V_1W$.
  Hence, $n^{2g}=[V_1\cO_\Hh^g:V_1W\cO_\Hh^g]=[\cO_\Hh^g:W\cO_\Hh^g]$.
  Thus, $W\in\MM(\cO_\Hh)$ with $\Norm(W)=n^g$.  Since the
  $M_k$'s and $N_k$'s have reduced norm $1$, $U$ also has reduced
  norm $n^g$; hence, $U^\dagger U=n\Id_{g\times g}$ since
  $U\in\GU_g(\H)$.  Also that means that $B\in\MM(\H)$.
But also $B=N_1^{-1}WN_2$ with $N_1, N_2\in \SL_2(\cO_{\Hh})$ and
$W\in\MM(\cO_{\Hh})$, so $B\in\MM(\cO_{\Hh})$ and hence
$B\in\MM(\cO_\H)$.
 Straightforward algebra shows that $B^\dagger H_1B=nH_2$, and we are
  done.
\end{proof}

\subsection{Brandt matrices: Examples}
\label{exa}
The Brandt matrices $B_g(n)$ for a maximal order $\cO_\HH\subseteq\HH$
are amenable to machine computation, although the memory requirements
rapidly grow with $n$ and especially $g$ so that 
few examples are accessible
with $g=3$.  We had no computations finish for $g\geq 4$.

\subsubsection{\texorpdfstring{$\HH=\HH_7$}{\unichar{"210D}=\unichar{"210D}\unichar{"2085}}}
Take $\HH=\HH_7$, the rational definite quaternion algebra of discriminant $7$.
The first class numbers of $\HH_7$ are: $h_1(\HH_7)=1$,
$h_2(\HH_7)=2$, $h_3(\HH_7)=5$.
The Brandt matrices $B_g(\ell)$ are given in Table 1 below for
primes $\ell=2, 3, 5, 11$ and $1\leq g\leq 3$.
Note that in all cases 
$B_g(\ell)$
has constant row-sum $N_g(\ell)=\prod_{k=1}^g(1+\ell^k)$
in keeping with Theorem \ref{hen}\eqref{hen1}.
A ? in the table below means that the computation did not finish.\\[.1in]
\begin{center}
\begin{table}[h]
\begin{tabular}{l c c c c}
   & \boldmath{$B_g(2)$} & \boldmath{$B_g(3)$} & 
\boldmath{$B_g(5)$} & \boldmath{$B_g(11)$}\\[.15in]
 \boldmath{$g=1$} & $[3]$ & $[4]$ & $[6]$ & $[12]$ \\[.2in]
\boldmath{$g=2$} & $\begin{bmatrix} 11 & 4 \\ 6 & 9\end{bmatrix}$ &
 $\begin{bmatrix} 28 & 12 \\ 18 & 22\end{bmatrix}$ &
$\begin{bmatrix} 112 & 44 \\ 66 & 90\end{bmatrix}$ &
$\begin{bmatrix} 928 & 536 \\ 804 & 660\end{bmatrix}$\\[.3in]
\boldmath{$g=3$} &  $\begin{bmatrix} 45 & 36 & 8 & 32 &14 \\ 
18 & 27 & 6 & 60 & 24\\14 & 21 & 30 & 14 & 56\\4 & 15 & 1 & 101 & 14\\
7 & 24 & 16 & 56 & 32\end{bmatrix}$ &
$\begin{bmatrix} 208 & 208 & 0 & 640 & 64 \\ 
104 & 184 & 32 & 640 & 160\\0 & 112 & 112 & 616 & 280\\ 80 & 160 & 44 & 
676 &1 60\\
32 & 160 & 80 & 640 & 208\end{bmatrix}$ & \text{\Large ?} &   
\text{\Large ?}\\[.6in]
\end{tabular}
\caption{\label{bt}Brandt matrices $B_g(\ell)$ for $\HH_7$}
\end{table}
\end{center}

\subsubsection{\texorpdfstring{$\HH=\HH_{11}$}{\unichar{"210D}=\unichar{"210D}\unichar{"2081}\unichar{"2081}}}
Now take $\HH=\HH_{11}$, the rational definite 
quaternion algebra of discriminant $11$.
The first class numbers of $\HH_{11}$ are: $h_1(\HH_{11})=2$,
$h_2(\HH_{11})=5$, $h_3(\HH_{11})=19$.
Table 2 below gives the Brandt matrices $B_g(\ell)$ for
$\ell=2,3,5,7$ and $g=1, 2$.  
Again in all examples $B_g(\ell)$
has constant row-sum $N_g(\ell)=\prod_{k=1}^g(1+\ell^k)$.\\[.1in]
\begin{center}
\begin{table}[h]
\begin{tabular}{l c c c c}
   & \boldmath{$B_g(2)$} & \boldmath{$B_g(3)$} & 
\boldmath{$B_g(5)$} & \boldmath{$B_g(7)$}\\[.15in]
 \boldmath{$g=1$} & $\begin{bmatrix} 1 &2\\3&0\end{bmatrix}$ & $\begin{bmatrix} 2 & 2\\3 &1\end{bmatrix}$ & 
 $\begin{bmatrix}4 &2\\3 & 3\end{bmatrix} $ & $\begin{bmatrix} 4 & 4\\6 &2\end{bmatrix}$ \\[.3in]
\boldmath{$g=2$} & $\begin{bmatrix} 
    3 & 4 & 4 & 0 & 4\\
    3 & 6 & 0 & 6 & 0\\
    3 & 0 & 3 & 8 & 1\\
    0 & 3 & 4 & 8 & 0\\
    9 & 0 & 3 & 0 & 3\end{bmatrix}$ &
$\begin{bmatrix}
    8  & 8  & 4 & 16 &  4\\
    6 & 20  & 0 & 12  & 2\\
    3 &  0 &  9&  22 &  6\\
    6  & 6 &  11 &  16 &  1\\
    9 &  6 &  18 &  6 &  1\end{bmatrix}$ &
$\begin{bmatrix}
    36 &  32 &  36&  32&  20\\
    24 & 42 & 24 & 60 &  6\\
    27 &  24 & 41 & 58 &  6\\
    12 & 30 &  29 &  78 &   7\\
    45 &  18 &  18&  42 & 33\end{bmatrix}$ &
$\begin{bmatrix}
    80 &  80 &  72 &  128 &  40\\
    60 & 128 &  48 & 144 &  20\\
    54 &  48 &  94 &  172 &  32\\
    48 &  72&   86&  176 &  18\\
    90 &  60 &  96 & 108 &  46\end{bmatrix}$\\[.6in]
\end{tabular}\\[.15in]
\caption{\label{bt1}Brandt matrices $B_g(\ell)$ for $\HH_{11}$}
\end{table}
\end{center}

\subsection{First properties of Brandt matrices}
\label{fi}
To simplify the discussion, we restrict to the case of the definite
quaternion algebra $\H=\H_p$ ramified at one finite prime $p$
and choose a maximal order $\cO=\cO_{\H_p}\subseteq\H_p$.

\subsubsection{The classical case: \texorpdfstring{\except{toc}{$g=1$}\for{toc}{$g=1$}}{g=1}}
\label{wolf}

We start by reviewing known properties of the classical Brandt
matrices $B(n)=B_1(n)$ for $\OO=\OO_{\HH_p}$, $n\geq 0$, 
largely following  Gross \cite[\S 1, 2]{Gr}.
Set $h=h_1(\H_p)$. Almost all the results given are due to 
Eichler \cite{E}.

\begin{remark1}
\label{cat}
{\rm
\begin{enumerate}[\upshape (a)]
\item
\label{cat1}
For $n\geq 0$ with $(n,p)=1$ the row sums $\sum_j B(n)_{ij}$ are
independent of $i$.  For $n\geq 1$ and $\ell\neq p$ prime
with $N_1(\ell)$ as in \eqref{grid1},
\[
\sum_j B(\ell)_{ij}=N_1(\ell)\colonequals \ell +1.
\]
\item
\label{cat2}
If $(m,n)=1$, then $B(mn)=B(m)B(n)$.
\item
\label{cat3}
$B(p)$ is a permutation matrix with $B(p)^2=\Id_{h\times h}$
and $B(p)^k=B(p^k)$.
\item
\label{cat4}
For a prime $\ell\neq p$ and $k\geq 2$,
\[
B(\ell^k)=B(\ell^{k-1})B(\ell)-\ell  B(\ell^{k-2}).
\]
\item
\label{cat5}
Set $e_j=e_j(1)$ for $1\leq j\leq h$.
We have
$e_jB(n)_{ij}=e_iB(n)_{ji}$ for $1\leq i, j\leq h$.
Equivalently, let $v_1, \ldots, v_h$ be the 
standard basis of $\Z^h$.
Define the inner product
$\langle v_i, v_j\rangle=e_i\delta_{ij}$ on $\Z^h$.
Then the Brandt matrices $B(n)$, $n\geq 1$, are self-adjoint
with respect to $\langle \,\, , \,\,\rangle$.
\item
\label{cat6}
(Eichler's mass formula)
Let $\H=\H_p$.  Then
\[
\sum_{i=1}^{h} \frac{1}{e_i}=\frac{p-1}{24}.
\]
Equivalently, the sum of any row of $B(0)$ is $(p-1)/24$ with
$B(0)=B_1(0)$ as in \eqref{rose}.
\item
\label{cat7} For all $m$ and $n$ we have $B(m)B(n)=B(n)B(m)$.
\item
\label{cat8}
The commutative $\Q$-algebra $\BB\otimes \Q$ is semi-simple, and
isomorphic to the product of totally real number fields.
\end{enumerate}
}
\end{remark1}

\subsubsection{The general case \texorpdfstring{\except{toc}{$g\geq 1$}\for{toc}{$g\geq 1$}}{g\unichar{"2265}1}}
\label{dog2}
We now generalize the results of Remark \ref{cat} to the
Brandt matrices $B_g(n)$ of Definition \ref{Brandt} with $\H=\H_p$.
Put $h_g=h_g(\H)$ and let $\BB_g=\BB_g(\H)$ be the $\Z$-subalgebra
of $\Mat_{h_g\times h_g}(\Z)$ generated by the Brandt matrices $B_g(n)$
for $n\geq 1$ as in Definition \ref{Brandt}.
\begin{theorem}
\label{hen}

\begin{enumerate}[\upshape (a)]
\item
\label{hen1}
For $n\geq 1$ with $(n,p)=1$, $\sum_j B_g(n)_{ij}=N_g(n)$
as in \eqref{grid}. For $n=\ell\neq p$ prime this gives
\[
\sum_j B_g(\ell)_{ij}=N_g(\ell)\colonequals \prod_{k=1}^g(1+\ell^k)
\]
with $N_g(\ell)$ as in \eqref{grid1}.
In particular, the row sums $\sum_jB_g(n)_{ij}$ are
independent of $i$ and in fact only depend on $n$ and $g$
\textup{(}they do not depend on $p$\textup{)}.
\item
\label{hen2}
If $(m,n)=1$, then $B_g(mn)=B_g(m)B_g(n)$.
\item
\label{hen5}
We have
$e_j(g)B_g(n)_{ij}=e_i(g)B_g(n)_{ji}$ for $1\leq i, j\leq h_g$.
Equivalently, let $v_1, \ldots, v_{h_g}$ be the 
standard basis of $\Z^{h_g}$.
Define the inner product
$\langle v_i, v_j\rangle_g=e_i(g)\delta_{ij}$ on $\Z^{h_g}$.
Then the generalized Brandt matrices $B_g(n)$, $n\geq 1$, are self-adjoint
with respect to $\langle \,\,\, , \,\,\rangle_g$.
\item
\label{hen6}
\textup{(Mass formula of Ekedahl 
and Hashimoto/Ibukiyama)}
\[
M_g:=\sum_{i=1}^{h_g} \frac{1}{e_i(g)}=\frac{(-1)^{g(g+1)/2}}{2^g}\left\{
\prod_{k=1}^g \zeta(1-2k)\right\}\cdot\prod_{k=1}^g\{p^k+(-1)^k\}.
\]
Equivalently, the sum of any row of $B_g(0)$
as in \eqref{rose}
is $M_g$.
Note that for $g=1$ we have $M_1=(p-1)/24$
and so recover Theorem \textup{\ref{cat}\eqref{cat6}}.

\item
\label{hen7}
For all $m$ and $n$ we have $B_g(m)B_g(n)=B_g(n)B_g(m)$.

\item
\label{hen8}
The commutative $\Q$-algebra $\BB_g\otimes \Q$ is semi-simple, and
isomorphic to the product of totally real number fields.

\end{enumerate}
\end{theorem}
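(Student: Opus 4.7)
The plan is to build each part of Theorem \ref{hen} on the lattice-theoretic description in Definition \ref{Brandt} combined with Theorem \ref{Brandt2}, except for the mass formula (d), which I would quote from the cited work of Ekedahl and Hashimoto/Ibukiyama. For (a) and (b), the equivalent formulation \eqref{stuffing4} realizes the row sum $\sum_j B_g(n)_{ij}$ as the total count of principally polarized right $\cO_\H$-submodules $L' \subseteq L_i$ of the appropriate index, modulo the equivalence fixing $L_i$. After tensoring with $\widehat{\Z}$, this count depends only on the local structure of $L_i \otimes \widehat{\Z} \cong \cO_\Hh^g$ as a principally polarized $\cO_\Hh$-module, which is the same for every class $[L_i]$ by Theorem \ref{pumpkin}; hence the row sum is independent of $i$. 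For $n = \ell$ prime unramified in $\H$, Morita equivalence at $\ell$ reduces the count to that of maximal isotropic subgroups of a symplectic space over $\F_\ell$, recovering the product formula indicated by \eqref{grid}. Multiplicativity (b) at coprime $m$ and $n$ is the standard Chinese Remainder decomposition: a sublattice of the prescribed index splits uniquely into disjointly-supported factors at the primes of $m$ and of $n$, and the polarization and class data split correspondingly.

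For part (c) I would produce an explicit weighted bijection between $\widetilde{\mathbf{B}}_g(n)_{ij}$ and $\widetilde{\mathbf{B}}_g(n)_{ji}$ using the dualization map $L \mapsto \widehat{L}$ from \eqref{stuffing2}. By Remark \ref{hat1}, the inclusion $L' \subseteq L$ yields $\widehat{L} \subseteq \widehat{L'}$ with $[L:L'] = [\widehat{L'}:\widehat{L}]$, and by Remark \ref{hat2}, $\widehat{L}$ lies in the same $\GU_g(\H)$-class as $L$. Thus any $U \in \GU_g(\H)$ with $L_i \supseteq U L_j$ at the Brandt index corresponds by dualization to some $U' \in \GU_g(\H)$ with $L_j \supseteq U' L_i$ of the same index, and this correspondence intertwines the $E_j(g)$-action on the source with the $E_i(g)$-action on the target. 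Orbit counting yields $e_j(g) B_g(n)_{ij} = e_i(g) B_g(n)_{ji}$. The reformulation as self-adjointness for the inner product $\langle\,,\,\rangle_g$ is then a direct bilinear computation.

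Part (e), the commutativity of the $B_g(n)$, is the main obstacle. By (b) it suffices to show $[B_g(\ell^j), B_g(\ell^k)] = 0$ for each prime $\ell$ and all $j, k \geq 0$, which is a local statement about the Hecke algebra at $\ell$. For $\ell \neq p$, the local order $\cO_{\H_\ell}$ is isomorphic to $\Mat_{2\times 2}(\Z_\ell)$, and Morita equivalence identifies the relevant Hecke algebra with the spherical Hecke algebra for $\GSp_{2g}(\Q_\ell)$ relative to $\GSp_{2g}(\Z_\ell)$; this last is commutative by the Satake isomorphism. For $\ell = p$ the operators $B_g(p^k)$ lie in a small commutative subalgebra generated by an Atkin--Lehner-type involution, generalizing Remark \ref{cat}\eqref{cat3}. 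This local-global reduction, combined with the Satake input, is the deepest ingredient of the theorem.

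Finally, (d) is a direct citation. For (f), observe that $\langle\,,\,\rangle_g$ is positive-definite since each $e_i(g) > 0$, that every $B_g(n)$ is self-adjoint with respect to it by (c), and that the $B_g(n)$ commute pairwise by (e). A commuting family of self-adjoint endomorphisms of a finite-dimensional positive-definite real inner product space is simultaneously diagonalizable with real eigenvalues. Consequently $\BB_g \otimes \Q$ is a commutative semi-simple $\Q$-subalgebra of $\Mat_{h(g) \times h(g)}(\Q)$ with real spectrum, and therefore decomposes as a finite product of totally real number fields.
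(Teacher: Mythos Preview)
Your treatment of (a), (b), (c), (d), and (f) matches the paper's approach essentially line for line: Chinese Remainder for multiplicativity, the dualization $L\mapsto\widehat{L}$ via Remarks~\ref{hat1} and~\ref{hat2} for the self-adjointness relation, citation for the mass formula, and the standard linear-algebra argument (commuting self-adjoint operators on a positive-definite space) for (f). The paper in fact does not write out a proof of (a) at all, so your local-structure argument there is an addition rather than a deviation.

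The genuine gap is in part (e) at the ramified prime $\ell=p$. Your assertion that ``the operators $B_g(p^k)$ lie in a small commutative subalgebra generated by an Atkin--Lehner-type involution, generalizing Remark~\ref{cat}\eqref{cat3}'' is not justified, and the authors themselves do not know such a structure for $g>1$: for $g=1$ one has $B(p)^2=\Id$ and $B(p^k)=B(p)^k$, but for $g=2$ there are $p^3+p^2+p+1$ isogenies of type $(p)^2$ and no single involution generates the algebra of $B_g(p^k)$'s. The paper instead proves commutativity of the local Hecke algebra at $p$ by Gelfand's trick: one shows $KMK=KM^\dagger K$ for $G=\GU_g(\H\otimes\Q_p)$, $K=\GU_g(\cO_\H\otimes\Z_p)$, using Shimura's diagonalization \cite[Prop.~3.10]{Sh} to reduce to diagonal $d$, and then the fact that in the ramified quaternion case every local ideal is a two-sided principal power of the maximal ideal to get $KdK=Kd^\dagger K$. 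Your $\ell\neq p$ argument via Satake is fine and agrees with the paper's, but you need this separate argument (or an equivalent) at $p$; the Atkin--Lehner shortcut does not carry through.
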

\begin{proof}
\eqref{hen2}: It's not hard to see
that $$(B_g(m)B_g(n))_{ij}=\frac{\#\{U\in\GU_g(\H) \text{ and $L_k$
    p.~p.}\mid [L_i:L_k]=m^{2g} \text{ and } [L_k:UL_j]=n^{2g}\}}
{e_j(g)} ,$$
where p.~p. denotes principally polarized.
Since $m$ and $n$ are relatively prime given $L_i$, $L_j$,
and $U$ with $[L_i:UL_j]=(mn)^{2g}$ there exists a unique principally
polarized $L_k$ with $[L_i:L_k]=m^{2g}$ and $[L_k:UL_j]=n^{2g}$.  Thus
$(B_g(m)B_g(n))_{ij}=B_g(mn)_{ij}$.\\

\eqref{hen1}: By \eqref{hen2} we may restrict to the case when
$n=\ell^r$ is a prime power with $\ell\neq p$.  Since each p.~p.~ lattice belongs to
precisely one equivalence class, we have using \eqref{stuffing4} and
\eqref{rose}
\begin{align}
  \nonumber\sum_j B_g(\ell^r)_{ij}&=\sum_j\# \{L_j^\prime \text{ p.~p.} \mid [L_i:L_j^\prime]=(\ell^r)^{2g}=\ell^{2rg}
  \text{ and $UL_j=L_j^\prime$ for some $U\in\GU_g(\HH) \}$}\\
  \label{risky}
    &=\#\{L_j \text{ p.~p.}\mid [L_i:L_j]=\ell^{2rg}\}.
\end{align}

We now suppose $g>1$ so that classes of $\GU_g(\OO_\H)$ can be
described by Hermitian matrices as well as principally polarized lattices
as in Theorem \ref{t2}.  The case $g=1$ is covered by
the classical results of Eichler in Remark \ref{cat}.

Set $\OO_\ell=\OO_{\HH}\otimes \Z_\ell$ for $\ell\neq p$
prime and $\HH_\ell=\HH\otimes \Q_\ell$.
There is a well-known one-to-one correspondence $\leftrightarrow$
between $g\times g$ quaternionic Hermitian matrices and
$2g\times 2g$ symplectic ($=$ nondegenerate alternating)
matrices; a reference is
\cite[\S 1]{Ek}. 
Let $e=\left[\begin{smallmatrix}0 & 1\\-1 &0
  \end{smallmatrix}\right]$ and
identify $\OO_\ell$ with $\Mat_{2\times 2}(\Z_\ell)$
so that the main involution on $\OO_\ell$
becomes
\begin{equation}
  \label{desk}
\overline{\left[\begin{smallmatrix} a & b \\c & d
    \end{smallmatrix}\right]}=\left[\begin{smallmatrix}
    d & -b\\ -c &a\end{smallmatrix}\right]=e^{-1}
\left[\begin{smallmatrix} a & b \\c & d
  \end{smallmatrix}\right]^t e.
\end{equation}
Let $E$ be the $2g\times 2g$ block matrix with $e$'s on the diagonal.
The identification of $\OO_\ell$ with $\Mat_{2\times 2}(\Z_\ell)$ gives
an identification of $\Mat_{g\times g}(\OO_\ell)$ with
$\Mat_{g\times g}(\Mat_{2\times2}(\Z_\ell))=\Mat_{2g\times 2g}(\Z_\ell)$;
as notation  $A\in\Mat_{g\times g}(\OO_\ell)$ is identified with
$\tilde{A}\in\Mat_{2g\times 2g}(\Z_\ell)$ so $\Norm (A)=\det (\tilde{A})$.
For a $A\in \Mat_{g\times g}(\OO_\ell)$ with $A^\dagger=\overline{A}^t
\in\Mat_{g\times g}(\OO_\ell)$,
\eqref{desk} implies that
\begin{equation}
  \label{desk1}
  \widetilde{A^\dagger} =E^{-1}\tilde{A}^tE .
  \end{equation}
In case $A=H$ is Hermitian so that $H^\dagger =H$, \eqref{desk1}
gives
\begin{equation*}
  E\tilde{H} = E\widetilde{H^\dagger} = \tilde{H}^t E,
\end{equation*}
so that
\begin{equation}
  \label{desk2}
  (E\tilde{H})^t=\tilde{H}^tE^t=\tilde{H}^t(-E)=-\tilde{H}^tE=-(E\tilde{H}).
\end{equation}
This gives the one-to-one correspondence $\leftrightarrow$: to
the Hermitian matrix $H\in\Mat_{g\times g}(\OO_\ell)$ we associate
the symplectic matrix $S_H\colonequals E\tilde{H}\in\Mat_{2g\times 2g}(\Z_\ell)$.
With $\Pf$ denoting the Pfaffian of a symplectic
matrix we have $\HNm(H)=\Pf(S_H)$.

We examine how this correspondence behaves with respect to sublattices.
Let $L$ be a nondegenerate Hermitian right $\OO_\ell$-module
of rank $g$ such that $L\otimes \Q_\ell\cong \HH_\ell^g$ with Hermitian
form given by
$H\in\Mat_{g\times g}(\OO_\ell)$
(so $H^\dagger =H$).
Let $L'=AL\subset L$ be an $\OO_\ell$-sublattice of finite index $i=[L:L']$
for $A\in\Mat_{g\times g}(\OO_\ell)$; then $(i)=(\Norm(A)^2)$ as ideals
in $\Z_\ell$ and $i=\ell^{2\val_\ell(\Norm(A))}$.
Let $H'=A^\dagger H A$ be the restriction of
$H$ to $L'$. We have
\begin{equation}
  \label{greasy}
  \Norm(H')=\Norm(A)^2\Norm(H)=i\Norm(H), \text{ or, }\left| \HNm(H')\right|=\sqrt{i} \left|
  \HNm(H)\right|.
\end{equation}

Separately, let $\tilde{L}$ be a rank-$2g$ symplectic $\Z_\ell$-lattice with symplectic
form $S$. Let
$\tilde{L}'\colonequals M\tilde{L}\subseteq \tilde{L}$
for $M\in\Mat_{2g\times 2g}(\Z_\ell)$ 
be a sublattice of finite index $\tilde{\iota}$ so $(\tilde{\iota})=
(\det(M))$ as ideals in $\Z_\ell$
with symplectic form $S'=M^t S M $ given by restricting $S$ to $\tilde{L}'$.
Then $(\Pf(S'))=(\Pf(S)\tilde{\iota})$ as ideals in $\Z_\ell$.
We have
\begin{align}
  \label{greasy1}
 \nonumber (\tilde{\iota})&=(\det(M))\subseteq \Z_\ell,
  \,\,(\det(S'))=(\det(M)^2\det(S)),\text{ and }\\
  (\Pf(S')&=(\det(M)\Pf(S))=(\tilde{\iota}\Pf(S))\subseteq \Z_\ell.
\end{align}

Now consider $\tilde{L}=\Z_\ell^{2g}$ with $S=S_H$ and
$A\in\Mat_{g\times g}(\OO_\ell)$
with $H'=A^\dagger HA$ the restriction of
$H$ to the $\OO_\ell$-sublattice
$L'=AL$,  and take $M=\tilde{A}\in\Mat_{2g\times 2g}(\Z_\ell)$.
Then $S'=\tilde{A}^tS\tilde{A}$ the restriction of $S'$ to
the $\Z_\ell$-sublattice $\tilde{L}'=\tilde{A}\tilde{L}$.  Note that
\begin{equation}
  \label{beast}
  S_{H'}=
E\tilde{A^\dagger}\tilde{H}\tilde{A}=
  \tilde{A}^t E\tilde{H}\tilde{A}=\tilde{A}^tS_H \tilde{A}=S'
\end{equation}
using \eqref{desk1}. Using \eqref{greasy}, the indices $\iota = [L:L']$ and
$\tilde{\iota}=[\tilde{L}:\tilde{L}']$ are related by
\begin{equation}
  \label{sunny}
  \iota=\ell^{2\val_\ell(\Norm(A))}=\ell^{2\val_\ell(\det(\tilde{A}))}=
  \bigl(\ell^{\val_\ell(\det{\tilde{A}})}\bigr)^2=\tilde{\iota}^2.
  \end{equation}

The problem of computing a given row sum of the Brandt matrix
$B(\ell^r)$ by \eqref{risky} is the following: We are given
$L=\OO_{\ell}^g$ together with a unimodular
Hermitian form $H\in\Mat_{g\times g}(\OO_\ell)
$ (so $\left| \HNm(H)\right|=1$).  We have to
count the $\OO_\ell$-submodules $\ell^r L\subset L'\subset L$
with $[L:L']=\ell^{2rg}$ such that the restriction $H'$ of $H$
to $L'$ satisfies $H'=\ell^{r}\mathbf{H'}$ with
$\mathbf{H'}\in\Mat_{g\times g}(\OO_\ell)$ unimodular.
Applying the one-to-one correspondence $\leftrightarrow$ above
induced by $H\leftrightarrow S=S_H$ and $H'\leftrightarrow S'=S_{H'}$
we see that this equivalent to the following computation with
symplectic $\Z_\ell$-lattices of rank $2g$: given the lattice
$\tilde{L}=\Z_\ell^{2g}$ together with a unimodular symplectic pairing
$S$ (unimodular in the sense that $\Pf(S)\in \Z_\ell^\times$),
count the $\Z_\ell$-sublattices $\ell^r\tilde{L}\subset \tilde{L}'\subset
\tilde{L}$ with $[\tilde{L}:\tilde{L}']=\ell^{rg}$ such that
the restriction $S'$ of $S$ to $\tilde{L'}$ satisfies
$S'=\ell^r\mathbf{S}'$ for a unimodular symplectic matrix
$\mathbf{S}'\in \Mat_{2g\times 2g}(\Z_\ell)$. (The indices
$[L:L']$ and $[\tilde{L}:\tilde{L}']$ are related by \eqref{sunny}.)
It follows that
$\tilde{L}'/\ell^r\tilde{L}$ is a maximal isotropic subspace
of $\tilde{L}/\ell^r\tilde{L}$ with respect to the
$\Z/\ell^r\Z$-symplectic pairing on $\tilde{L}/\ell^r\tilde{L}$
induced by $S$.  Moreover,
given the maximal isotropic subspace $\tilde{L}'/\ell^r\tilde{L}$
we can recover $\tilde{L}'\subseteq \tilde{L}$.  We now
remark that all unimodular symplectic lattices over $\Z_\ell$
of the same dimension are isomorphic.  Hence without loss
of generality we can start with $\tilde{L}=\Z_\ell^{2g}$
and $S$ the standard unimodular symplectic pairing.  So 
the sum of the entries in any row of the Brandt
matrix $B_g(\ell^r)$ is equal to $N_{g}(\ell^r)$
as in \eqref{grid}.  In particular all the row sums of $B_g(\ell^r)$
are equal and (perhaps surprisingly) do not depend on $p$.

\eqref{hen5}: By definition this is equivalent to proving that
$\#\{U\in\GU_g(\H)\mid [L_i:UL_j]=n^{2g}\}$ and $\#\{U\in\GU_g(\H)\mid
      [L_j:UL_i]=n^{2g}\}$ are equal.  By the comments following \eqref{rose}
we can replace the $L$'s with arbitrary representatives of their classes.  By
Remark \ref{hat2} we can use their duals, so
$$\#\{U\in\GU_g(\H)\mid [L_j:UL_i]=n^{2g}\}=\#\{U\in\GU_g(\H)\mid
[\widehat{L_j}:\widehat{(U^{-1})^\dagger L_i}]=n^{2g}\}.$$ But by
Remark \ref{hat1} the right hand side of the above is equal to
$$\#\{U\in\GU_g(\H)\mid [(U^{-1})^\dagger
  L_i:L_j]=n^{2g}\}=\#\{U\in\GU_g(\H)\mid [L_i:U^\dagger
  L_j]=n^{2g}\},$$ and we are done. \\


\eqref{hen6}: See \cite[p.~159]{Ek} and \cite[Prop.~9]{HI}, cf.
\cite[Thm.~3.1]{Y}. \\


\eqref{hen7}:
The Brandt matrices $B(n)$ are in image of the Hecke algebra for $(G,K)$ with 
$G=\GU_g(\Hh)$, $K=\GU_g(\cO_\Hh)$ acting on
the lattice $\Z[\cP_g(\cO_\H)]$ with basis $\cP_g(\cO_\H)$,
which is a space of algebraic modular forms in the sense of
\cite{Gr1}.  In fact the 
Brandt matrices $B(n)$ are linear combinations of 
standard Hecke operators in the Hecke algebra.

By \eqref{hen2} we may restrict to the case where both $m$ and $n$ are powers of a prime $\ell$. Commutativity here is implied by commutativity of 
the local Hecke algebra for 
\begin{equation}
\label{turn}
G=G_{\ell}=\GU_g(\H\otimes\Z_{\ell}),\quad
K=K_\ell=\GU_g(\cO_\H\otimes\Z_\ell)
\end{equation}
  Satake proves a structure
theorem for this local algebra \cite[Thm. 8]{Sat} which in particular
shows that it is commutative.

Below we give a simple argument for commutativity, worked out 
in correspondence with Guy Henniart and Marie-France Vign\'{e}ras.
We use $(G,K)$ as in \eqref{turn}. 
By  Gelfand's trick \cite[{\rm IV}, \S 1, Thm.~1]{La}
(see also \cite[Prop.~3.8]{Sh1}),
 it suffices to show that for all elements $M\in G$ we have
$
KMK=KM^\dagger K$.

In the case when $\ell=p$, by \cite[Prop.~3.10]{Sh} we have that for every $M\in G$,
$
KMK=KdK
$
for some diagonal matrix $d$ over the quaternion algebra $\H\otimes\Z_\ell$.  We
also have
$
K d K=Kd^\dagger K
$
since in the ramified case all ideals are two-sided and principal
powers of the unique prime ideal.

When $\ell\ne p$, the group $G$ is just the
symplectic group and we can use \cite[Prop.~1.6]{Sh} to show that
for arbitrary $M\in G$ we have
$
KMK=KdK,
$
where $d$ is now in a diagonal matrix over $\Q_\ell$, hence trivially preserved
by transpose.

The only complication is making sure (conjugate-)transpose is in fact
an anti-involution in the basis from \cite{Sh}.  However if
$H$ is the matrix giving the Hermitian (or symplectic) form in Shimura's
basis then we have
$
H^\dagger H H=H$,
so $H$ is itself an element of $G$.  And since for any matrix $M\in G$ we have
$M^\dagger=H M^{-1} H^{-1}$,
(conjugate-)transpose is in fact an anti-involution.\\

\eqref{hen8}: This follows trivially from \eqref{hen5}, \eqref{hen7},
and the fact that self-adjoint matrices are semi-simple with 
real eigenvalues.

\end{proof}
\noindent We do not know the analogue of Remark \ref{cat}\eqref{cat4} for
  our generalized Brandt matrices $B_g(n)$.  For a weak result see
  \cite[Thm.~3]{andrianov}.

\section{The big and little Brandt graphs}
\label{brr}

It is convenient to reformulate Section \ref{br} on Brandt matrices
in the broader
context of Brandt graphs.  We begin with a general discussion
of graphs in order to be precise about the definitions.  We will
again use this in Section \ref{ble} when we consider the big, little,
and enhanced isogeny graphs. 
\subsection{Graphs}
\label{grs}
\begin{definition1}
\label{grdef}
{\rm
A graph $\Grr$ has a set of vertices $\Ver(\Grr)=\{v_1,\ldots , v_s\}$
and a set of (directed) edges $\Ed(\Grr)$.  An edge
$e\in\Ed(\Grr)$ has initial vertex $o(e)$ and terminal
vertex $t(e)$.  For vertices $v_i,\,v_j\in \Ver(\Grr)$, put
\[
\Ed(\Grr)_{ij}=\{e\in\Ed(\Grr)\mid o(e)=v_i \text{ and }
t(e)=v_j\}.
\]
The {\sf adjacency matrix} $\Ad(\Grr)\in\Mat_{s\times s}(\Z)$is the 
matrix with 
\[
\Ad(\Grr)_{ij}=\#\Ed({\Grr})_{ij}.
\]
We place no further restrictions on our definition of a graph.
Serre \cite{Ser} requires graphs to be {\sf graphs with opposites}:
every directed edge $e\in\Ed(\Grr)$ has an {\sf opposite}
edge $\overline{e}\in\Ed(\Grr)$.
An edge $e$ with $\overline{e}=e$
is called a {\sf half-edge}. Serre forbids half-edges; we will call a graph
satisfying his requirements a {\sf graph without half-edges}.
Kurihara \cite{Kur} relaxes Serre's definition to allow
half-edges giving the notion of a {\sf graph with half-edges}.
(A graph with half-edges may have $\emptyset$ as its set of half-edges,
so every graph without half-edges is a graph with half-edges.)
}
\end{definition1}
\begin{definition1}
\label{weight1}
{\rm
\begin{enumerate}[\upshape (a)]
\item
\label{weight11}
A {\sf graph with weights}, or a {\sf weighted graph},
 is a graph with opposites together with a
{\sf weight function} $\w$ mapping vertices and edges to positive integers
such that for each edge $e$ we have 
$\w(e)=\w(\overline{e})$ and 
$\w(e)|\w(o(e))$ (which implies $\w(e)|\w(t(e))$). 
\item
\label{weight12}
Following \cite[Defn.~3-1]{Kur}, a {\sf graph with lengths} is a graph
with opposites together with a length function $f$ mapping edges
to positive integers satisfying $f(e)=f(\overline{e})$.
A graph with weights defines a graph with lengths by setting the length
of an edge equal to its weight and forgetting the weights of the vertices.
\item
\label{wad}
The {\sf  weighted adjacency matrix} $A_{\w}\colonequals \Adw(\Grr)$
of a weighted graph $\Grr$
with $\Ver(\Grr)=\{v_1,\ldots , v_s\}$ is
\[
(A_{\w})_{ij}=\sum_{e\in\Ed({\Grr})_{ij}}\frac{\w(v_i)}{\w(e)}, \quad 1\leq i,j\leq s.
\]
\item
\label{wad1}
Following \cite[\S 3]{Kur}, if $\Grr$ is a graph with half-edges, denote
by $\Grr^\ast$ the graph obtained by removing the half-edges from $\Grr$.
If $\Grr$ is a graph with weights, then the graph $\Grr^\ast$ with
half-edges removed is also a graph with weights---the weights
are inherited from $\Grr$.  Likewise, if $\Grr$ is a graph with lengths,
then $\Grr^\ast$ is a graph  with inherited lengths.

\end{enumerate}
}
\end{definition1}
\noindent Many authors (especially in computer science) call a graph
with weights what we have called a graph with lengths, and accordingly
have a different notion of a weighted adjacency matrix.

For the remainder of this section, we let $\HH$ be a rational
definite quaternion algebra with maximal order $\cO_{\HH}\subseteq\HH$,
main involution $x\mapsto\overline{x}$, and reduced norm
$\Norm_{\HH/\Q}(x)=x\overline{x}$. Let $\hat{\Z}$ be the profinite
completion of $\Z$ and $\hat{\Q}=\hat{\Z}\otimes\Q$ the finite
ad\`{e}les of $\Q$. Set $\cO_{\hat{\HH}}=\cO_{\HH}\otimes\hat{\Z}$ and
let $\hat{\HH}=\cO_{\hat{\H}}\otimes\Q$ be the finite ad\`{e}les of $\HH$.

The classes of $\GU_g(\cO_\H)$ are
\[
\cP_{\!g}(\cO_\H)=\{[L_1],\ldots , [L_h]\}
\]
with $L_i$ a principally polarized right $\cO_\HH$-module and 
$h=h_g(\H)$ as in Definition \ref{later}.  In case $g=1$ the
principally polarized right $\cO_\HH$-module $L_i$ is just
a right $\cO_\HH$-ideal $I_i$ with left order the maximal ideal
$\cO_i\subseteq \H$.
We will freely use the notation in
Definition \ref{Brandt} and Remark \ref{Brandt well} of Section \ref{br}, which
the reader is advised to review.

\subsection{The big Brandt graph \texorpdfstring{\except{toc}{\boldmath{$\BR_{\!g}(n,\cO_{\HH})$}}\for{toc}{$\BR_{\!g}(n,\cO_{\HH})$}}{Br\unichar{"005F}g(n,O\unichar{"210D})}}
\label{rent}

The vertices of the big Brandt graph $\BR_{\!g}(n)\colonequals
\BR_{\!g}(n,\cO_{\HH})$
are
\[
\Ver(\BR_{\!g}(n))=\cP_g(\cO_{\H})=\{[L_1],\ldots , [L_h]\}.
\]
The directed edges connecting the vertex $[L_i]$ to the 
vertex $[L_j]$ are
\[
\Ed(\BR_{\!g}(n))_{ij}=\mathbf{B}_g(n)_{ij}^{\rm big}
\]
as in  \eqref{stuffing} and \eqref{stuffing4}.
The graph $\BR_{\!g}(n)$ is a graph without opposites.
Moreover it is immediate from \eqref{rose} that the adjacency matrix of 
$\BR_{\!g}(n)$ is
the Brandt matrix $B_g(n)$ for $\cO_{\HH}\subseteq\HH$:
\begin{equation}
\label{grinder}
\Ad(\BR_{\!g}(n))=B_g(n).
\end{equation}
When $g=1$ the big Brandt graph $\BR_1(n)$ is the graph
constructed by Pizer \cite{Piz1}, \cite{Piz2} from 
the classical Brandt matrices.

\subsection{The little Brandt graph \texorpdfstring{\except{toc}{\boldmath{$\br_{\!g}(n,\cO_{\HH})$}}\for{toc}{$\br_{\!g}(n,\cO_{\HH})$}}{br\unichar{"005F}g(n,O\unichar{"210D})}}
The vertices of the little Brandt graph $\br_{\!g}(n)\colonequals
\br_{\!g}(n,\cO_{\HH})$ are
\[
\Ver(\br_{\!g}(n))=\cP_g(\cO_{\H})=\{[L_1], \ldots, [L_h]\}.
\]
The directed edges connecting the vertex $[L_i]$ to the vertex
$[L_j]$ are
\[
\Ed(\br_{\!g}(n))_{ij}=\mathbf{B}_g(n)_{ij}^{\rm little}
\]
as in  \eqref{stuffing1} and \eqref{stuffing5}.

Unlike the big Brandt graph, the little Brandt graph $\br_{\!g}(n)$ {\em
  is} a graph with opposites: the opposite $\overline{e}$ of an edge
$e\in\Ed(\br_{\!g}(n))_{ij}$ with $e=[L'_j]_l$ as in \eqref{stuffing4} is
$\overline{e}=[V\widehat{L_i}]_l$, where $V$ satisfies $V\widehat{L'_j}=L_j$ with
the dual $\widehat{L'}$ of the principally polarized finitely
generated right $\cO_{\HH}$-module $L'$ with $L'\otimes \Q=\HH^g$
defined in \eqref{stuffing2}.  The little Brandt graph $\br_{\!g}(n)$ is
also a graph with weights: we set $\w([L_i])=e_i(g)$ for $[L_i]\in
\cP_g(\cO_{\HH})=\Ver(\br_{\!g}(n))$ and $\w([U]_l)=e(U)$ for $[U]_l\in
\mathbf{B}_g(n)^{\rm little}_{ij}=\Ed(\br_{\!g}(n))_{ij}$ in the notation
\eqref{dubious}. To see that this is well-defined, verify
\begin{enumerate}[\upshape (a)]
\item
$e(U)=e(U')$ if $[U]_l=[U']_l
\in\mathbf{B}_g(n)^{\rm little}_{ij}=\Ed(\br_{\!g}(n))_{ij}$ and 
\item
$\w(e)=\w(\overline{e})$ for $e\in \Ed(\br_{\!g}(n))_{ij}$.
\end{enumerate}

It follows from the definitions that the weighted adjacency matrix 
of the little 
Brandt graph $\br_{\!g}(n)$ is the usual Brandt matrix $B_{\!g}(n)$:
\begin{proposition}
\label{sail}
We have
$\Adw(\br_{\!g}(n))=\Ad(\BR_{\!g}(n))=B_g(n)$.
\end{proposition}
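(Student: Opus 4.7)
The plan is to reduce the proposition to an orbit-counting identity. The second equality $\Ad(\BR_{\!g}(n)) = B_g(n)$ is already equation \eqref{grinder}, so the content is the first equality $\Adw(\br_{\!g}(n)) = B_g(n)$. Unwinding the definitions of weighted adjacency matrix and of the weights on $\br_{\!g}(n)$, what must be shown entrywise is
\[
\sum_{[U]_l \in \mathbf{B}_g(n)^{\rm little}_{ij}} \frac{e_i(g)}{e(U)} \;=\; B_g(n)_{ij} \;=\; \frac{\#\widetilde{\mathbf{B}}_g(n)_{ij}}{e_j(g)}.
\]

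The plan is to exhibit a group action whose orbits are the $\sim_l$-classes, and whose stabilizers have size $e(U)$. Specifically, let $E_i(g)\times E_j(g)$ act on $\widetilde{\mathbf{B}}_g(n)_{ij}$ by $(V_i, V_j)\cdot U = V_i U V_j$. The orbits are exactly the $\sim_l$-classes that form $\mathbf{B}_g(n)^{\rm little}_{ij}$, by definition of $\sim_l$. For a representative $U$, the stabilizer consists of pairs with $V_i U V_j = U$, i.e., $V_i = U V_j^{-1} U^{-1}$; so the stabilizer is parametrized by $\{V_j \in E_j(g) : U V_j^{-1} U^{-1} \in E_i(g)\}$. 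The map $V_j \mapsto U V_j^{-1} U^{-1}$ then gives a bijection from this set to $E(U) = \{V \in E_i(g) \mid U^{-1} V U \in E_j(g)\}$, since both conditions $V L_i = L_i$ and $VUL_j = UL_j$ are visibly equivalent to $V \in E_i(g)$ with $U^{-1}VU \in E_j(g)$. Hence the stabilizer has order $e(U)$, and by orbit-stabilizer the $\sim_l$-class of $U$ has size $e_i(g)e_j(g)/e(U)$.

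Summing orbit sizes over representatives then yields
\[
\#\widetilde{\mathbf{B}}_g(n)_{ij} \;=\; \sum_{[U]_l \in \mathbf{B}_g(n)^{\rm little}_{ij}} \frac{e_i(g)\, e_j(g)}{e(U)}.
\]
Dividing both sides by $e_j(g)$ and invoking the definition \eqref{rose} gives precisely the required identity, proving $\Adw(\br_{\!g}(n)) = B_g(n)$.

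The argument is mostly bookkeeping; the one step that requires actual verification is the identification of the stabilizer in $E_i(g) \times E_j(g)$ with the set $E(U)$ of Definition \ref{Brandt}, and this is immediate once one writes out both conditions. Consistency checks that should be noted along the way: the weight function on edges is well-defined (i.e., $e(U) = e(U')$ when $[U]_l = [U']_l$), which is the reason the sum on the left is well-defined — and this also follows from the same stabilizer computation since stabilizers of points in the same orbit are conjugate and hence have equal cardinality. No further obstacles arise.
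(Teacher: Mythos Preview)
Your proof is correct. The paper itself gives no argument beyond the sentence ``It follows from the definitions\ldots'' preceding the proposition, so your orbit-counting computation is precisely the verification the paper omits. The identification of the $\sim_l$-classes with orbits of the $E_i(g)\times E_j(g)$-action and of the stabilizer size with $e(U)$ is exactly what is needed, and your bijection between the stabilizer and $E(U)$ is checked correctly. One cosmetic point: the formula $(V_i,V_j)\cdot U = V_iUV_j$ is, strictly speaking, an action of $E_i(g)\times E_j(g)^{\mathrm{op}}$ rather than of $E_i(g)\times E_j(g)$; this changes nothing, since the group has the same order and the orbit--stabilizer count is unaffected, but you might prefer to write $V_iUV_j^{-1}$ to avoid the remark.
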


\part{Applying the quaternion infrastructure to isogeny graphs}
\label{smile2}
\section{Superspecial abelian varieties, their principal and 
\texorpdfstring{$[\ell]$}{[\unichar{"2113}]}-polarizations,
and their isogenies}
\label{revenue}

In this section $X$ is an abelian variety defined over a field $k$
(not necessarily algebraically closed) with dual abelian variety
$\hat{X}=\Pic^0(X)$; $A$ will continue to denote a superspecial
  abelian variety.  If $f\colon X\rightarrow Y$ is a morphism of abelian
varieties over $k$, the dual morphism $\hat{f}\colon \hat{Y}\rightarrow
\hat{X}$ is defined over $k$.  For a point $x$ of $X$, denote by $t_x$
translation by $x$ on $X$; the isomorphism class of a line bundle $L$
on $X$ is denoted $[L]$.  A homomorphism $\tau\colon X\rightarrow \hat{X}$
is {\sf symmetric} if $\hat{\tau}= \tau$, where we identify
$X=\hat{\hat{X}}$ via the canonical isomorphism
\begin{equation}
\label{lob}
\kappa_X\colon X\stackrel{\simeq}{\longrightarrow} \hat{\hat{X}}\text{ of \cite[Thm.~7.9]{vM}, for example.}
\end {equation}
A line bundle $L$ on $X$ gives rise to a symmetric homomorphism
$\varphi_L\colon X\rightarrow \hat{X}$ which maps points $x$ of 
$X$ to $[t_x^\ast L\otimes L^{-1}]$. The Poincar\'{e} line bundle
on $X\times\hat{X}$ is denoted $\caP$.
Our standard reference for abelian varieties is
\cite{vM}, whose modern treatment of polarizations is ideally suited to
our needs here.  

\begin{definition1}\text{\rm (cf.~\cite[Cor.~11.5, Defn.~11.6]{vM}.) }
\label{polar}
{\rm
A {\sf polarization} of an abelian variety $X$ over a field $k$
is a homomorphism  $\lambda\colon X\rightarrow \hat{X}$ over $k$
satisfying the equivalent
conditions
\begin{enumerate}[\upshape (a)]
\item
\label{pola1}
$\lambda$ is a symmetric isogeny and the line bundle
$(\id_X,\lambda)^\ast \caP$ is ample;
\item
\label{pola2}
there exists a finite separable field extension $k\subseteq K$ and
an ample line bundle $L$ on $X_K$ such that $\lambda_K=\varphi_L$.
\end{enumerate}
}
\end{definition1}

If $\lambda\colon X\rightarrow \hat{X}$ is a polarization of the abelian variety
$X$, following Mumford \cite[Defns.~7.2, 7.3]{Mu1} define the {\sf degree}
$\deg(\lambda)$ of the polarization $\lambda$ to be the degree of
the isogeny $\lambda$, i.e., $\#\ker(\lambda)$. The degree $\deg(\lambda)$ is
always a square
by the Riemann-Roch theorem: $\deg(\lambda)=d^2$ with
$d=\chi(L)$ if $\lambda_{\overline{k}}=\varphi_L$, see \cite[\S 16]{Mu}.  
It is convenient to define the {\sf reduced degree} $\rdeg(\lambda)$
of the polarization $\lambda$ to be 
\begin{equation}
\label{chorizo}
\rdeg(\lambda)=\sqrt{\deg(\lambda)} .
\end{equation}
A polarization  $\lambda\colon X\rightarrow \hat{X}$ which is an isomorphism is a 
{\sf principal polarization}.  
 If $\lambda\colon X\rightarrow \hat{X}$ is a polarization of the abelian variety
$X$ and $\phi\colon X'\rightarrow X$ is an isogeny,
then 
\begin{equation}
\label{transfer}
\phi^\ast(\lambda)\colonequals \hat{\phi}\circ\lambda\circ\phi\colon X'\rightarrow
\widehat{X'}
\end{equation}
is a polarization of $X'$ with
\begin{equation}
\label{beer}
\deg(\phi^\ast(\lambda))=
\deg(\lambda)\deg(\phi)^2\qquad\text{and}\qquad
\rdeg(\phi^\ast(\lambda))=\rdeg(\lambda)\deg(\phi).
\end{equation}

\begin{definition1}
\label{worst}
{\rm 
Suppose the abelian variety  $X$  over the field $k$
has dimension $g$ and 
polarization $\lambda\colon X\rightarrow \hat{X}$
with kernel $\ker(\lambda)$. The polarization
$\lambda$ is is an {\sf $[\ell]$-polarization}
for a prime $\ell\neq \charr k$ if $\ker(\lambda)\subseteq X[\ell]$.
An $[\ell]$-polarization $\lambda\colon X\rightarrow \hat{X}$ 
has reduced degree $\rdeg (\lambda)=\ell^r$ for
$0\leq r\leq g$.  We say that $\lambda$ is of {\sf type $r$} and
$(X,\lambda)$ is an $[\ell]$-polarized abelian variety of  type $r$.
An $[\ell]$-polarization
of type $0$ is a principal polarization.
If $\lambda\colon X\rightarrow \hat{X}$ is an $[\ell]$-polarization 
of type $r$, then there is a homomorphism
$[\lambda]=[\lambda]_\ell\colon\hat{X}\rightarrow X$ such that
$[\lambda]\circ \lambda$ is multiplication by $\ell$ on $X$.
We will see in Theorem \ref{yam} that $[\lambda]$ is an
$[\ell]$-polarization of type $\hat{r}\colonequals g-r$ on $\hat{X}$.
}
\end{definition1}
\begin{remark1}
{\rm
For an abelian variety $X$ over a field $k$ and $n\in\NN$ prime to 
$\charr k$ there is a 
perfect pairing 
\[
\langle\,\,\, ,\,\,\,\rangle_{n}\colonequals
\langle\,\,\, ,\,\,\,\rangle_{X,n}\colon  X[n]\times \hat{X}[n]\rightarrow
\Bmu_n .
\]
A polarization $\lambda$ on $X$ 
gives rise to the Weil pairing
\begin{equation*}
\label{dented}
\langle\,\,\, ,\,\,\,\rangle_{\lambda,n}\colonequals
\langle\,\,\, ,\,\,\,\rangle_{X,\lambda,n} \colon
X[n]\times X[n]\longrightarrow X[n]\times \hat{X}[n]
\stackrel{\langle \,\,,\,\,\rangle_n}{\longrightarrow}\Bmu_n
\text{ with }
\langle u,v\rangle_{\lambda,n}=\langle u, \lambda(v)\rangle_n.
\end{equation*}
}
\end{remark1}
\begin{proposition}
\label{print}
Let $X$ be an abelian variety over a field $k$ with $\dim X=g$.  Let
$\caP$ be the
Poincar\'{e} line bundle on
$X\times\hat{X}$.
\begin{enumerate}[\upshape (a)]
\item
\label{print0}
Let $\tau\colon X\rightarrow \hat{X}$ be a symmetric isogeny.  The following
are equivalent:
\begin{enumerate}[\upshape (i)]
\item
$\tau$ is a polarization.
\item
$(n\id_X,\tau)^\ast\caP$ is an ample line bundle on $X$ for some $n\in\NN$.
\item
$(n\id_X, \tau)^\ast\caP$ is an ample line bundle on $X$ for all $n\in\NN$.
\item
$n\tau$ is a polarization for some $n\in\NN$.
\item
$n\tau$ is a polarization for all $n\in\NN$.
\end{enumerate}
\item
\label{print2}
Let $\ell\neq \charr k$ be a prime.
If $(X,\lambda)$ is an $[\ell]$-polarized abelian
variety of type $g$, then $\lambda=\ell \lambda '$
for a principal polarization $\lambda'$ of $X$.
\end{enumerate}
\end{proposition}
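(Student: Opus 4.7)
For part (a), my plan is to reduce all five conditions to the single statement that $(\id_X,\tau)^*\caP$ is ample. The two ingredients I would use are: first, ampleness of a line bundle $L$ on an abelian variety is equivalent to ampleness of any positive tensor power $L^{\otimes n}$; and second, the standard Poincar\'e-bundle pullback formula $(n_X \times \id_{\hat X})^*\caP \cong \caP^{\otimes n} \cong (\id_X \times n_{\hat X})^*\caP$ on $X \times \hat X$. Applying these to the factorizations $(n\id_X, \tau) = (n_X \times \id_{\hat X}) \circ (\id_X, \tau)$ and $(\id_X, n\tau) = (\id_X \times n_{\hat X}) \circ (\id_X, \tau)$ of maps $X \to X \times \hat X$, both $(n\id_X, \tau)^*\caP$ and $(\id_X, n\tau)^*\caP$ equal $((\id_X, \tau)^*\caP)^{\otimes n}$. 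All of (ii)--(v) therefore collapse to Definition~\ref{polar}\eqref{pola1}, which is (i). Symmetry of $n\tau$ in (iv), (v) is automatic since $\widehat{n\tau} = n\hat\tau = n\tau$.

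For part (b), the plan is to divide $\lambda$ by $\ell$. Being of type $g$ gives $\deg(\lambda) = \rdeg(\lambda)^2 = \ell^{2g} = \#X[\ell]$; combined with $\ker(\lambda) \subseteq X[\ell]$, this forces the equality $\ker(\lambda) = X[\ell] = \ker[\ell]_X$. The universal property of the quotient isogeny then produces a unique homomorphism $\lambda':X \to \hat X$ with $\lambda = \lambda'\circ [\ell]_X = \ell\lambda'$. Comparing degrees, $\deg\lambda' = 1$, so $\lambda'$ is an isomorphism. For symmetry: $\ell\widehat{\lambda'} = \widehat{\ell\lambda'} = \hat\lambda = \lambda = \ell\lambda'$, and since $\Hom(X,\hat X)$ is a torsion-free $\Z$-module, cancellation yields $\widehat{\lambda'} = \lambda'$. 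Finally, part (a) applied with $n = \ell$ promotes the symmetric isogeny $\lambda'$ to a polarization (because $\ell\lambda' = \lambda$ is one), and being an isomorphism $\lambda'$ is then a principal polarization.

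The main obstacle is minor and sits entirely inside part (a): one must verify that the Poincar\'e-bundle identity and the tensor-power criterion for ampleness both go through over a base field $k$ that need not be algebraically closed. Both are standard and appear in \cite{vM}. Beyond this, the argument is bookkeeping together with a clean application of the universal property of the quotient by $[\ell]_X$ and the torsion-freeness of $\Hom(X,\hat X)$.
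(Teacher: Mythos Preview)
Your proof is correct and follows essentially the same route as the paper's: both parts hinge on the identity $(n\id_X,\tau)^\ast\caP = (\id_X,n\tau)^\ast\caP = ((\id_X,\tau)^\ast\caP)^{\otimes n}$ (which the paper cites as \cite[Exercise~7.4]{vM}) together with the tensor-power criterion for ampleness, and part~(b) is deduced from part~(a) exactly as you do. Your write-up of (b) is simply more explicit than the paper's---you spell out why $\ker\lambda = X[\ell]$, why $\lambda'$ exists and is symmetric via torsion-freeness of $\Hom(X,\hat X)$---where the paper just asserts $\lambda = \ell\lambda'$ for a symmetric isogeny $\lambda'$ and invokes (a).
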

\begin{proof}
\eqref{print0}:
As in Definition \ref{polar}, the symmetric isogeny
$\eta\colon X\rightarrow \hat{X}$ is a polarization if and only if the line bundle
$(\id_X,\eta)^\ast\caP$ on $X$ is ample. But 
\[
(n\id_X,\eta)^\ast\caP=(\id_X, n\eta)^\ast\caP=(\id_X,\eta)^\ast
\caP^{\otimes n}=((\id_X,\eta)^\ast\caP))^{\otimes n} 
\]
by \cite[Exercise 7.4]{vM}, and so $(n\id_X,\eta)^\ast\caP=
(\id_X, n\eta)^\ast\caP$ is ample
if and only if $(\id_X,\eta)^\ast\caP$ is ample.\\
\eqref{print2}:
If $(X,\lambda)$ is an $[\ell]$-polarized abelian variety of type $g$,
then $\lambda=\ell\lambda'$ for a symmetric isogeny
$\lambda'\colon X\rightarrow \hat{X}$. By \eqref{print0} we have
that $\lambda'$ is a principal polarization of $X$.
\end{proof}

\begin{definition1}
  \label{best}
{\rm
Let $\A=(A,\lambda)$ be an $[\ell]$-polarized $g$-dimensional superspecial
abelian variety over $\oFp$, $p\neq \ell$.  
We denote its $\bFp$-isomorphism class by 
$[\A]$. 
\begin{enumerate}[\upshape (a)]
\item
\label{best2}
For $0\leq r\leq g$, let
 $\sS_{\!g}(\ell,p)_r$ be the set of $\oFp$-isomorphism classes $[\A]$
  of $g$-dimensional $[\ell]$-polarized superspecial abelian varieties
  over $\oFp$ of type $r$. In particular $\sS_{\!g}(p)_0\colonequals
\sS_{\!g}(\ell,p)_0$ is the 
set of $\oFp$-isomorphism classes of principally polarized superspecial
abelian varieties. The sets $\sS_{\!g}(\ell,p)_r$ are finite.
\item
\label{best3}
For $[\A=(A,\lambda)]\in\sS_{\!g}(p)_0$, set
  $\ell \A=
\ell (A,\lambda)=(A, \ell \lambda)$, so $[\ell\A]\in \sS_{\!g}(\ell,p)_g$.  
Suppose $\A'=(A,\lambda')$ with $[\A']\in\sS_{\!g}(\ell,p)_g$.
Then there is a principally
polarized abelian variety $\A=(A,\lambda)$ with  $[\A']=[\ell \A]$
by Proposition \ref{print}\eqref{print2}.
In particular $\sS_{\!g}(\ell,p)_g$ is the set 
of $\oFp$-isomorphism classes of $g$-dimensional 
superspecial abelian varieties over $\oFp$ with $\ell$ times a 
principal polarization. There is a canonical bijection between 
$\sS_{\!g}(p)_0$
and $\sS_{\!g}(\ell,p)_g$ and $\#\sS_{\!g}(\ell, p)_g=h_g(p)$.
\end{enumerate}
}
\end{definition1}
\begin{theorem}
\label{yam}
Suppose $(X,\lambda)$ is a $g$-dimensional $[\ell]$-polarized abelian variety 
of type $r$, $0\leq r\leq g$, over a field $k$. 
Then $[\lambda]=[\lambda]_\ell$ as in Definition \textup{\ref{worst}} is
an $[\ell]$-polarization of $\hat{X}$ of type $\hat{r}\colonequals g-r$.
\end{theorem}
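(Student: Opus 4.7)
The plan is to verify, in order, the four defining properties of an $[\ell]$-polarization of $\hat{X}$: symmetry of $[\lambda]$, being a polarization (not merely a symmetric isogeny), the containment $\ker([\lambda])\subseteq\hat{X}[\ell]$, and the reduced-degree identity $\rdeg([\lambda])=\ell^{g-r}$.

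First I would promote the defining relation $[\lambda]\circ\lambda=\ell\,\id_X$ to the companion relation $\lambda\circ[\lambda]=\ell\,\id_{\hat{X}}$. Composing on the left with $\lambda$ yields $(\lambda\circ[\lambda]-\ell\,\id_{\hat X})\circ\lambda=0$; since $\lambda$ is an isogeny, hence faithfully flat and in particular an epimorphism of group schemes, we can right-cancel it to obtain $\lambda\circ[\lambda]=\ell\,\id_{\hat X}$. Two consequences drop out at once: $\ker([\lambda])\subseteq\hat{X}[\ell]$, and, by multiplicativity of degrees applied to this identity, $\deg([\lambda])\deg(\lambda)=\ell^{2g}$, so $\deg([\lambda])=\ell^{2(g-r)}$ and therefore $\rdeg([\lambda])=\ell^{g-r}=\ell^{\hat r}$.

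To establish symmetry, I would dualize the defining relation to get $\widehat\lambda\circ\widehat{[\lambda]}=\ell\,\id_{\hat X}$, and then invoke the symmetry of $\lambda$ (which, under the canonical isomorphism $\kappa_X$ of \eqref{lob}, identifies $\widehat\lambda$ with $\lambda$) to conclude $\lambda\circ\widehat{[\lambda]}=\ell\,\id_{\hat X}=\lambda\circ[\lambda]$. Hence the difference $\widehat{[\lambda]}-[\lambda]\colon\hat X\to X$ has image contained in the finite group scheme $\ker(\lambda)$; since $\hat X$ is connected, this difference must vanish, giving $\widehat{[\lambda]}=[\lambda]$.

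The substantive step, and the main obstacle, is upgrading the symmetric isogeny $[\lambda]$ to a polarization. Here I would apply Proposition \ref{print}\eqref{print0}: it suffices to check that $\ell\,[\lambda]$ is a polarization. Using the identities just established,
\[
\ell\,[\lambda] \;=\; [\lambda]\circ\lambda\circ[\lambda] \;=\; \widehat{[\lambda]}\circ\lambda\circ[\lambda] \;=\; [\lambda]^{\ast}(\lambda)
\]
in the sense of \eqref{transfer}. The map $[\lambda]$ is itself an isogeny---its kernel is finite by the preceding step and its image is necessarily all of $X$ on dimension grounds---so $[\lambda]^{\ast}(\lambda)$ is a polarization of $\hat X$, and consequently so is $[\lambda]$. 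Together with the kernel containment and the reduced-degree computation, this exhibits $[\lambda]$ as an $[\ell]$-polarization of $\hat X$ of type $\hat r=g-r$, as required.
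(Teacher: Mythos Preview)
Your proof is correct, and your route to the polarization property differs from the paper's. The paper works directly with Poincar\'e bundles: it shows that $(\id_{\hat X},[\lambda])^*\cQ$ is ample by pulling it back along the isogeny $\lambda$ and rewriting the result as $(\ell\,\id_X,\lambda)^*\caP$, which is ample by Proposition~\ref{print}\eqref{print0}. You instead derive the algebraic identity $\ell\,[\lambda]=\widehat{[\lambda]}\circ\lambda\circ[\lambda]=[\lambda]^*(\lambda)$, recognize this as the pullback of the polarization $\lambda$ along the isogeny $[\lambda]$, and then invoke Proposition~\ref{print}\eqref{print0} to pass from $\ell\,[\lambda]$ down to $[\lambda]$. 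Your approach avoids any explicit manipulation of line bundles, at the cost of having to establish the symmetry $\widehat{[\lambda]}=[\lambda]$ first (the paper simply asserts symmetry). You are also more explicit than the paper in deriving the companion relation $\lambda\circ[\lambda]=\ell\,\id_{\hat X}$ and in checking $\ker([\lambda])\subseteq\hat X[\ell]$, which the paper leaves implicit in its final sentence.
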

\begin{proof}
Firstly note that $[\lambda]:\hat{X}\rightarrow X$ is symmetric.
Let $\caP$ be the Poincar\'{e} bundle on $X\times \hat{X}$
and let $\cQ$ be the Poincar\'{e} bundle on $\hat{X}\times X$,
where we identify $\hat{\hat{X}}=X$ as in \eqref{lob}.
If $s:X\times \hat{X}\rightarrow \hat{X}\times X$ is the switch factors
map $s(x,y)=(y,x)$, then $s^\ast(\cQ)=\caP$. From this it follows that
\begin{equation}
\label{waiting}
([\lambda],\id_{\hat{X}})^\ast\caP=([\lambda],\id_{\hat{X}})^\ast s^\ast\cQ=
(s\circ([\lambda],\id_{\hat{X}}))^\ast\cQ=(\id_{\hat{X}},[\lambda])^\ast\cQ
\end{equation}
as line bundles on $\hat{X}$.
Now $[\lambda]$ is a polarization of $\hat{X}$ if and only if 
the line bundle $(\id_{\hat{X}},[\lambda])^\ast\cQ$ on 
$\hat{X}$ is ample as in Definition \ref{polar}. But since 
$\lambda:X\rightarrow \hat{X}$ is an isogeny, this is true if and only if 
\[
\lambda^\ast(\id_{\hat{X}},[\lambda])^\ast\cQ=
\lambda^\ast([\lambda],\id_{\hat{X}})^\ast\caP=
(\ell\id_X, \lambda)^\ast\caP
\]
is an ample line bundle on $X$, where we have used
\eqref{waiting}.  But this is true since $\lambda$
is a polarization by Proposition \ref{print}\eqref{print0}.
Since $\deg([\lambda]\circ\lambda)=\ell^{2g}$ and
$\deg(\lambda)=\ell^{2r}$, it follows that
$\deg([\lambda])=\ell^{2\hat{r}}$.  Hence $[\lambda]$ is an 
$[\ell]$-polarization of $\hat{X}$ of type $\hat{r}$.
\end{proof}
\begin{definition1}
\label{hiro}
{\rm
Suppose $\X=(X,\lambda)$ is a $g$-dimensional $[\ell]$-polarized abelian variety
of type $r$, $0\leq r\leq g$. The the {\sf $[\ell]$-dual} of $\X$
is $\hat{\X}=(\hat{X},[\lambda])$ with the $[\ell]$-polarization
$[\lambda]$ on $\hat{X}$ of type $\hat{r}$ as in Proposition \ref{yam}. If 
$[\A]\in
\sS_{\!g}(\ell, p)_r$, then $[\hat{\A}]\in\sS_{\!g}(\ell,p)_{\hat{r}}$. The association
$[\A]\leftrightarrow [\hat{\A}]$ gives a one-to-one correspondence
between $\sS_{\!g}(\ell, p)_r$ and $\sS_{\!g}(\ell, p)_{\hat{r}}$.

The $[\ell]$-dual $\hat{\A}$
of $\A=(A,\lambda)$ with $[\A]\in\sS_{\!g}(p)_0$ is $\ell\A$ as in
Definition \ref{best}\eqref{best3} with
$[\ell\A]\in\sS_{\!g}(\ell, p)_{\hat{0}}=
\sS_{\!g}(\ell, p)_{g}
$.  Likewise the $[\ell]$-dual 
$\widehat{\ell\A}$
of $\ell\A=(A,\ell \lambda)$ with $[\ell\A]\in\sS_{\!g}(\ell,p)_g$ is
$[\A]\in\sS_{\!g}(p)_{\hat{g}}=
\sS_{\!g}(p)_{0}$.
}
\end{definition1}

Now fix $n\in\NN$ prime to $\charr k$.
Let $\lambda$ be a principal polarization 
on the abelian variety $X$ over the
field $k$.  Then $\lambda$ defines an alternating and nondegenerate
Weil pairing on the $n$-torsion $X[n]$ 
of $X$
\begin{equation}
  \label{onion}
  \langle \quad , \quad \rangle_{\lambda,n}\colon X[n]\times X[n]\rightarrow \Bmu_n;
  \end{equation}
$\#X[n]=n^{2g}$.
A subgroup $C\subseteq X[n]$ is {\sf
  $n$-isotropic} if the Weil pairing $ \langle \quad , \quad
\rangle_{\lambda,n}$ is trivial when restricted to $C$.  An
$n$-isotropic subgroup $C$ is {\sf maximal $n$-isotropic} if there is
no $n$-isotropic subgroup of $X$ properly containing $C$.  The order
of a maximal $n$-isotropic subgroup of $X$ is $n^g$.  Put
\begin{equation}
  \label{grunt}
  \Iso_n(\X)=\{\mbox{maximal $n$-isotropic subgroups $C\subseteq X[n]$}\}.
\end{equation}
For a prime $\ell\neq \charr k$ it is known that
\begin{equation}
\label{grant}
\#\Iso_\ell(\X)=N_g(\ell)\colonequals \prod_{k=1}^{g}(\ell^k+1) .
\end{equation}
Define an equivalence relation $\sim$ on $\Iso_n(\X)$ by 
$C\sim C'$ if there exists $\alpha\in\Aut(\X)$ with 
$\alpha(C)=C'$ for $C,\, C'\in\Iso_n(\X)$. Put
\begin{equation}
\label{granted}
\iso_n(\X)=\Iso_n(\X)/\sim
\end{equation}
with $[C]\in\iso_n(\X)$ the equivalence class containing
$C\in\Iso_n(\X)$.

A key fact is that quotienting a principally
polarized abelian variety by a maximal isotropic subgroup
gives an abelian variety which is again principally polarized:

\begin{proposition}
  \label{heard}
\textup{cf. \cite[\S 23, Cor.~to Thm.~2]{Mu} and \cite[p. 36]{Oo}.}
Suppose $\X=(X, \lambda)$ is a principally polarized
abelian variety over an algebraically closed field $k$
 and $C\subseteq X[n]$ with $n$ prime to $\charr k$.
Let $\psi_C:X\rightarrow X/C=:X'$.  Then there is a principal polarization
$\lambda'$ on $X'$ so that $\psi_C^{\ast}(\lambda')=n \lambda$ if and only if
$C\in\Iso_n(\X)$. In this case we write 
$\X'=(X',\lambda')=\X/C$.  Furthermore, if
$[\A]\in\sS_g(p)_0$ and $(n,p)=1$, then $[\A']\in\sS_{\!g}(p)_0$.
\end{proposition}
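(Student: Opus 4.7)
The plan is to reformulate everything in terms of descending line bundles along the quotient isogeny $\psi_C: X \to X' = X/C$ and apply Mumford's descent theorem from \cite[\S 23]{Mu}. Since $k$ is algebraically closed and $\lambda$ is a polarization, Definition \ref{polar}\eqref{pola2} gives an ample line bundle $L$ on $X$ with $\varphi_L = \lambda$; then $L^{\otimes n}$ is ample with $\varphi_{L^{\otimes n}} = n\lambda$ and $K(L^{\otimes n}) = \ker(n\lambda) = X[n] \supseteq C$. Mumford's theorem asserts that $L^{\otimes n}$ descends along $\psi_C$ to a line bundle $L'$ on $X'$ (necessarily ample, and satisfying $\psi_C^* L' = L^{\otimes n}$) if and only if $C$ lifts to a level subgroup of the theta group $\mathcal{G}(L^{\otimes n})$, equivalently if and only if $C$ is isotropic with respect to the commutator pairing $e^{L^{\otimes n}}$ on $X[n]$. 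By a standard identification in \cite[\S 23, Thm.~2]{Mu}, this commutator pairing coincides with the Weil pairing $\langle\,,\,\rangle_{\lambda,n}$, so the descent condition is precisely that $C$ be isotropic.

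In the forward direction I would take $C \in \Iso_n(\X)$, form $L'$ as above, and set $\lambda' = \varphi_{L'}$. The identity $\psi_C^*(\lambda') = \varphi_{\psi_C^* L'} = \varphi_{L^{\otimes n}} = n\lambda$ is immediate. To see that $\lambda'$ is principal, I use the degree formula \eqref{beer}: $n^g = \rdeg(n\lambda) = \rdeg(\psi_C^*\lambda') = \rdeg(\lambda')\deg(\psi_C) = \rdeg(\lambda')\cdot n^g$, using $\#C = n^g$ for a maximal isotropic subgroup. This forces $\rdeg(\lambda') = 1$. For the converse, starting from a principal $\lambda'$ with $\psi_C^*(\lambda') = n\lambda$, the same degree computation yields $\#C = n^g$, and picking any ample $M'$ on $X'$ with $\varphi_{M'} = \lambda'$ makes $\psi_C^* M'$ tautologically descended along $\psi_C$; the theta-group criterion then forces $C$ to be isotropic, and combined with the order count we conclude $C \in \Iso_n(\X)$.

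The superspecial part is separate and easier: when $(n,p) = 1$ the group scheme $C \subseteq A[n]$ is \'etale, so $\psi_C$ is an \'etale isogeny, inducing an isomorphism of $p$-divisible groups $A[p^\infty] \cong A'[p^\infty]$. Since $A$ is superspecial its $p$-divisible group is isomorphic to $E[p^\infty]^g$ for $E$ a supersingular elliptic curve, so the same holds for $A'$, and Theorem \ref{sunn} gives $A' \cong E^g$ when $g > 1$; hence $[\A'] \in \sS_g(p)_0$. The main obstacle I anticipate is the converse in the first assertion, where one must identify the theta-group commutator pairing on $K(\psi_C^* M')$ with $\langle\,,\,\rangle_{\lambda,n}$ independently of which ample representative of $\lambda'$ we chose; this is precisely the content of Mumford's descent theorem and requires careful bookkeeping with theta groups, but no genuinely new input beyond \cite[\S 23]{Mu}.
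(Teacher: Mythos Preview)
The paper does not provide its own proof of this proposition; it merely cites \cite[\S 23, Cor.~to Thm.~2]{Mu} and \cite[p.~36]{Oo}. Your proposal is a correct and complete elaboration of precisely the Mumford descent argument that those citations point to, so there is nothing to compare---you have reconstructed the intended proof. One small remark: your superspecial argument via $p$-divisible groups is correct for all $g\geq 1$, so the clause ``when $g>1$'' invoking Theorem~\ref{sunn} is unnecessary (and leaves $g=1$ unaddressed as written); for $g=1$ the isomorphism $A[p^\infty]\cong E[p^\infty]$ already forces $A'$ to be supersingular.
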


Recall that we have fixed a supersingular elliptic curve $E=E/\oFp$
with $\cO=\cO_{\HH_p}=\cO_E=\End(E)$;
$\cO$ is a maximal order in the rational quaternion algebra
$\HH_p\cong \End^0(E)\colonequals \End(E)\otimes \Q$.

\begin{remark1}
\label{cabbage}
{\rm
  For $g>1$ polarizations $\lambda$  on $g$-dimensional
superspecial abelian varieties $A=E^g$ in characteristic
$p$ with $\rdeg (\lambda)=d$ as in \eqref{chorizo} are in
one-to-one correspondence with 
$\msH_{g,d}(\cO)$ as in \eqref{hermit}.
Explicitly, let $\lambda_0$ be the product
polarization on $E^g$.  Then the polarization 
$\lambda_H$ corresponding
to $H\in \msH_{g,d}(\cO)$ is
\begin{equation}
\label{pola}
\lambda_H:A\stackrel{H}{\longrightarrow}A
\stackrel{\lambda_{0}}
{\longrightarrow} \hat{A},
\end{equation}
see \cite[Prop.~2.8]{IKO}. Note that for $n\in\NN$ and $H\in\msH_{g,d}(\cO)$
we have $\lambda_{nH}=n\lambda_H$.
}
\end{remark1}
\begin{proposition}
\label{degree} 
Let $\ell\neq p$ be prime.  Let
$A=E^g/\bFp$ with polarizations $\lambda\colonequals\lambda_H$,
$\lambda'\colonequals \lambda_{H'}$ 
corresponding to positive-definite Hermitian matrices 
 $H, H'\in\msH_{g,d}(\cO)$ 
as in \eqref{pola}.
Let
$\phi:A\rightarrow A$ be an isogeny of degree $\ell^{gm}$
given by $M\in \Mat_{g\times g}(\cO)$.
Then $\phi^\ast (\lambda')=\ell^m \lambda$ if and only if $M^\dagger H'M=\ell^m H$.
\end{proposition}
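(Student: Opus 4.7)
The plan is to unfold the definitions on both sides of the claimed equivalence and reduce it to a chain of equalities between maps $A\to\hat A$ that all pass through the product polarization $\lambda_0$, which is an isomorphism and can therefore be cancelled. Recall from \eqref{transfer} that
\[
\phi^\ast(\lambda')=\hat\phi\circ\lambda'\circ\phi,
\]
and from \eqref{pola} that $\lambda=\lambda_0\circ H$ and $\lambda'=\lambda_0\circ H'$, where $H,H'$ are regarded as endomorphisms of $A=E^g$ via their action on $\cO^g$. Finally, Remark~\ref{cabbage} gives $\ell^m\lambda=\lambda_{\ell^mH}=\lambda_0\circ(\ell^mH)$.

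The key identification I need is that of $\hat\phi$. Since $\phi$ is the endomorphism of $A=E^g$ given by $M\in\Mat_{g\times g}(\cO)$, the Rosati involution attached to the product principal polarization $\lambda_0$ acts on $\End(A)=\Mat_{g\times g}(\cO)$ by $M\mapsto M^\dagger=\overline{M}^t$. Unwinding the definition of the Rosati involution $\alpha\mapsto \lambda_0^{-1}\circ\hat\alpha\circ\lambda_0$, this is equivalent to the identity
\[
\hat\phi\circ\lambda_0=\lambda_0\circ M^\dagger
\]
of morphisms $A\to\hat A$. This is the only non-formal input; it reduces to the corresponding statement for a single supersingular elliptic curve $E$ (where the canonical principal polarization induces the main involution $x\mapsto\bar x$ as the Rosati involution on $\cO=\End(E)$) and is then extended to matrices by componentwise duality.

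Combining these ingredients I compute
\[
\phi^\ast(\lambda')=\hat\phi\circ\lambda_0\circ H'\circ M
=\lambda_0\circ M^\dagger\circ H'\circ M
=\lambda_0\circ(M^\dagger H' M),
\]
while
\[
\ell^m\lambda=\lambda_0\circ(\ell^m H).
\]
Since $\lambda_0$ is an isomorphism, equality of these two morphisms $A\to\hat A$ is equivalent to equality of the endomorphisms $M^\dagger H'M$ and $\ell^m H$ of $A$, and since $\End(A)=\Mat_{g\times g}(\cO)$ this is exactly the equality of matrices $M^\dagger H'M=\ell^m H$. This proves the equivalence.

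As a compatibility check, taking reduced norms in $M^\dagger H'M=\ell^m H$ and using $\Norm(M^\dagger)=\Norm(M)$ together with $\Norm(H)=\Norm(H')$ (both lie in $\msH_{g,d}(\cO)$) yields $\Norm(M)^2=\ell^{2gm}$, so $\Norm(M)=\ell^{gm}$, matching the hypothesis $\deg\phi=\ell^{gm}$; thus the degree assumption is automatic from the matrix identity and plays no role in the logical equivalence beyond ensuring that both sides are polarizations of the same reduced degree.
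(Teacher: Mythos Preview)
Your proof is correct and follows essentially the same approach as the paper: both unwind $\phi^\ast(\lambda')=\hat\phi\circ\lambda'\circ\phi$, substitute $\lambda'=\lambda_0\circ H'$, use that the Rosati involution for the product polarization sends $M$ to $M^\dagger$ (equivalently $\hat\phi\circ\lambda_0=\lambda_0\circ M^\dagger$), and then cancel the isomorphism $\lambda_0$ to reduce to the matrix identity. Your extra compatibility check on reduced norms at the end is a pleasant addition not present in the paper's argument.
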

\begin{proof}
By \eqref{transfer} $$\phi^\ast(\lambda')=
\hat{\phi}\circ\lambda'\circ\phi ,$$ which by \eqref{pola}
equals $$\hat{\phi}\circ\lambda_{0}\circ H'\circ\phi=
\widehat{M}\lambda_{0}H'M=
\lambda_{0}\lambda_{0}^{-1}\widehat{M}\lambda_{0}H'M.$$ Now
$\lambda_{0}^{-1}\widehat{M}\lambda_{0}$ is the Rosati
anti-involution applied to $M$ by definition which equals $M^\dagger$
in the product polarization case.  Thus we have $$\phi^\ast(\lambda')=
\lambda_{0}M^{\dagger}HM= \lambda_{M^{\dagger}HM}.$$ Hence,
$\phi^*(\lambda')=\lambda_{M^{\dagger}HM}$ and since $\ell^m \lambda=\ell^m\lambda_H=\lambda_{\ell^m H}$, we are
done by Remark \ref{cabbage}.
\end{proof}

This allows us to describe the set $\sS_{\!g}(p)_0$ for $g>1$ following \cite{IKO}.  
\begin{proposition}
  \label{only}
  If $g>1$ then the map
\[
\overline{\msH}_{\!\!g,1}(\cO)\ni[H]\mapsto [\A(H)],\mbox{ where }\A(H)=(A, \lambda_H),
\]
with $\cO=\cO_{\HH_p}$
is a bijection between $\omsH_{\!\!g,1}(\cO)$ defined in \eqref{class} and $\sS_{\!g}(p)_0$.  
\end{proposition}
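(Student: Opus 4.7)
The plan is to exhibit the inverse direction in two steps: first, use the Deligne--Ogus--Shioda theorem to rigidify every principally polarized superspecial $\A$ so that its underlying abelian variety is literally $E^g$, and then use Remark \ref{cabbage} to read off the hermitian matrix $H$ from the polarization. Concretely, for a given $[\A] = [(A,\lambda)] \in \sS_g(p)_0$, Theorem \ref{sunn} supplies an isomorphism $A \xrightarrow{\sim} E^g$ (available only because $g>1$), and transporting $\lambda$ across this isomorphism yields a principal polarization on $E^g$. Remark \ref{cabbage} then associates to this polarization a unique hermitian matrix $H \in \Mat_{g\times g}(\cO)$ which, because the polarization is principal, has reduced degree $1$ and hence lies in $\msH_{g,1}(\cO)$. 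This shows the map $[H]\mapsto [\A(H)]$ is surjective.

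For well-definedness and injectivity I would show simultaneously that
\[
[\A(H)] = [\A(H')] \ \text{ in } \sS_g(p)_0 \iff [H]=[H'] \ \text{ in } \overline{\msH}_{g,1}(\cO).
\]
An isomorphism $(E^g,\lambda_H) \xrightarrow{\sim} (E^g,\lambda_{H'})$ of principally polarized abelian varieties amounts to an automorphism $\phi$ of $E^g$ with $\phi^\ast(\lambda_{H'}) = \lambda_H$. Since $\End(E^g)=\MM(\cO)$ and $\phi$ is an isomorphism (hence of reduced norm a unit, which for the definite quaternion order $\cO$ forces norm $1$), the map $\phi$ corresponds to some $M \in \GL_g(\cO)=\SL_g(\cO)$. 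Applying Proposition \ref{degree} with the exponent $m=0$ (an isomorphism has degree $1$) translates the condition $\phi^\ast(\lambda_{H'})=\lambda_H$ into the matrix identity $M^\dagger H' M = H$, which is precisely the $\SL_g(\cO)$-equivalence defining $\overline{\msH}_{g,1}(\cO)$. Both directions of the equivalence read off immediately from this.

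The main obstacle, such as it is, lies in the use of Theorem \ref{sunn}: the proposition fails for $g=1$ precisely because there one cannot identify the underlying abelian variety with a fixed $E^g$. Once we have fixed $A = E^g$ using $g>1$, the argument is a formal consequence of the correspondence in Remark \ref{cabbage} together with the degree formula of Proposition \ref{degree}, so no further computation beyond verifying that Proposition \ref{degree} applies with $\ell^m = 1$ is required.
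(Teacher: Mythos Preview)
Your argument is correct and is precisely the natural proof of this proposition. The paper itself does not supply a proof of Proposition~\ref{only}; it is stated with a reference to \cite{IKO} (``following \cite{IKO}''), so there is no in-paper proof to compare against. Your use of Theorem~\ref{sunn} for surjectivity, Remark~\ref{cabbage} to recover $H$, and Proposition~\ref{degree} (whose proof in fact establishes $\phi^{\ast}(\lambda_{H'})=\lambda_{M^{\dagger}H'M}$ for any isogeny $\phi$ given by $M$, not just those of $\ell$-power degree) for the equivalence of isomorphism classes is exactly the intended argument.
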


We thus obtain the following description of $\sS_{\!g}(p)_0$.

\begin{theorem}
  \label{gar}
\textup{(Ibukiyama/Katsura/Oort, Serre) }
There are one-to-one correspondences $\leftrightarrow$
with $\cO=\cO_{\HH_p}$ \textup{:}
\begin{enumerate}[\upshape (a)]
\item
  \label{gar1}
For $g\geq 1$,
\[
\sSgpz\longleftrightarrow \msP_g(\HH_p)= \GU_g(\HH_p)\backslash 
\GU_g(\widehat{\HH}_p)/\GU_g(\cO_{\widehat{\HH}_p}).
\]
\item
  \label{gar2}
For $g>1$,
\[
\sSgpz\longleftrightarrow \msP_g(\HH_p)=\GU_g(\HH_p)\backslash 
\GU_g(\widehat{\HH}_p)/\GU_g(\cO_{\widehat{\HH}_p})
\longleftrightarrow  \overline{\msH}_{g,1}(\cO) =\msH_{g,1}(\cO)/\GL_g(\cO) ,
\]
where the second one-to-one correspondence is Theorem \textup{\ref{t2}}.
\end{enumerate}
\end{theorem}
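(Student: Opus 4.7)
The plan is to deduce both bijections by composing results already established earlier in the paper, treating the cases $g=1$ and $g>1$ separately.

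For $g>1$, I would prove part (b) first and read off part (a) as a consequence. Proposition \ref{only} identifies $\sSgpz$ with $\omsH_{\!\!g,1}(\cO)$ via the assignment $[H]\mapsto [\A(H)]=[(A,\lambda_H)]$, using the correspondence of Remark \ref{cabbage} between positive-definite hermitian matrices of reduced norm $1$ and principal polarizations on $A=E^g$. Independently, Theorem \ref{t2} produces a bijection
\[
\iota\colon \omsH_{\!\!g,1}(\cO)\longrightarrow \GU_g(\HH_p)\bs\GU_g(\widehat{\HH}_p)/\GU_g(\cO_{\widehat{\HH}_p})=\msP_g(\HH_p)
\]
by writing $H=M^\dagger M=N^\dagger N$ with $M\in \SL_g(\HH_p)$ and $N\in \SL_g(\cO_{\widehat{\HH}_p})$ (possible by Lemma \ref{l2}) and setting $[H]\mapsto [MN^{-1}]$; surjectivity here is exactly where strong approximation for $\SL_g$ over $\Q$ is invoked. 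Composing the two bijections yields both assertions in (b), and part (a) for $g>1$ is obtained simultaneously.

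For $g=1$, I would handle part (a) via the classical Deuring correspondence. The key observation is that $\GU_1$ collapses to the multiplicative group: for any $M\in \HH_p^\times$ one has $M^\dagger M=\Norm_{\HH_p/\Q}(M)\in \Q^\times$ automatically, so $\GU_1(\HH_p)=\HH_p^\times$, and likewise $\GU_1(\widehat{\HH}_p)=\widehat{\HH}_p^\times$ and $\GU_1(\cO_{\widehat{\HH}_p})=\cO_{\widehat{\HH}_p}^\times$. Therefore $\msP_1(\HH_p)=\HH_p^\times\bs \widehat{\HH}_p^\times/\cO_{\widehat{\HH}_p}^\times$, which by \eqref{bonus} is the set of right $\cO$-ideal classes of the maximal order $\cO=\cO_{\HH_p}$. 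The Deuring correspondence then assigns to $[E']\in \sS_1(p)_0$ the class of the right $\cO$-ideal $\Hom(E',E)$, with composition on the right providing the right $\cO$-module structure; that this is a bijection is classical (cf.~\cite[\S 3.5.B]{V}) and underlies the original count of supersingular $j$-invariants.

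The main obstruction one needs to navigate is that the tools developed in Sections \ref{def}--\ref{br} (Proposition \ref{only} and the surjectivity half of Theorem \ref{t2}) genuinely require $g>1$: the surjectivity of $\iota$ rests on strong approximation for $\SL_g(\HH_p)$ with $S=\{\infty\}$, which fails at $g=1$ because the norm-one quaternions are compact. This forces the separate treatment of the $g=1$ case above, but once $\GU_1$ is correctly identified with the full multiplicative group, the two regimes assemble cleanly into the unified statement of part (a).
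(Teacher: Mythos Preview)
Your proposal is correct, and it assembles exactly the ingredients the paper has already laid out. In fact, the paper does not supply a self-contained proof of this theorem at all: it is stated as a known result attributed to Ibukiyama/Katsura/Oort and Serre, placed immediately after Proposition~\ref{only} precisely so that the $g>1$ case follows by composing Proposition~\ref{only} with Theorem~\ref{t2}, as you do. Your handling of $g=1$ via the identification $\GU_1=\H_p^\times$ and the classical Deuring correspondence is the standard argument and is likewise what the paper implicitly relies on (cf.\ \eqref{bonus}). Your remark that the $g=1$ case must be treated separately because strong approximation for $\SL_1(\HH_p)$ at $S=\{\infty\}$ fails is exactly the right diagnosis of why Theorem~\ref{t2} is stated only for $g>1$.
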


\begin{theorem}
\label{elect}
Let $\A=(A,\lambda)$ and $\A'=(A',\lambda')$ with $[\A], [\A']\in
\sS_{\!g}(p)_0$ for  $g>1$ and let $\ell\neq p$ be a prime.
Suppose $\psi:A'\rightarrow A$ is  an isogeny such that 
$\psi^\ast(\lambda)=\ell^m \lambda'$ for $m\geq 1$. Then there exist
principally polarized superspecial abelian varieties
\begin{align*}
&(A_1, \lambda_1)=\A_1=\A'=(A',\lambda'),\, \A_2 =(A_2, \lambda_2),\,\ldots ,
\A_m=(A_m,\lambda_m), \\
&\quad\quad\quad\qquad\,\, (A_{m+1}, \lambda_{m+1})=\A_{m+1}=\A=(A,\lambda)
\end{align*}
 with $(\ell)^g$-isogenies $\psi_i\colon A_{i}\rightarrow A_{i+1}$
such that $\psi_i^\ast(\lambda_{i+1})=\ell \lambda_i$ for $1\leq i\leq m$ and
$\psi=\psi_{m}\circ\psi_{m-1}\circ \cdots \circ\psi_{1}$\textup{:}
\[
\psi:A'=A_1\stackrel{\psi_1}{\longrightarrow}A_2\stackrel{\psi_2}
{\longrightarrow}\,\cdots\, \stackrel{\psi_{m-1}}{\longrightarrow}
A_m\stackrel{\psi_m}{\longrightarrow}A_{m+1}=A.
\]
\end{theorem}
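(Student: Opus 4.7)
I would argue by induction on $m$; the base case $m = 1$ is immediate, as $\psi$ itself is the desired $(\ell)^g$-isogeny by Proposition \ref{heard}. For $m \geq 2$ the plan is to exhibit $K_1 \in \Iso_\ell(\A')$ with $K_1 \subseteq K \colonequals \ker(\psi)$, peel off the first $(\ell)^g$-isogeny via the quotient $\A' \to \A'/K_1$, and apply the induction hypothesis to the residual isogeny.

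First I would establish that $K$ is maximal isotropic in $A'[\ell^m]$ for $\langle\,,\,\rangle_{\lambda',\ell^m}$. The identity $\hat\psi\circ\lambda\circ\psi = \ell^m\lambda'$, together with $\lambda'$ being an isomorphism, places $\ker(\psi)$ inside $A'[\ell^m]$; the degree formula \eqref{beer}, combined with $\deg(\ell^m\lambda') = \ell^{2mg}$, gives $|K| = \ell^{mg}$; and Proposition \ref{heard} applied at $n = \ell^m$ (with the roles of $\X$ and $\X'$ swapped) identifies $K$ with an element of $\Iso_{\ell^m}(\A')$.

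The key step is to show $K_1 \colonequals K \cap A'[\ell]$ lies in $\Iso_\ell(\A')$. By the structure theorem for finitely generated $\Z/\ell^m\Z$-modules, $K \cong \bigoplus_{j=1}^r \Z/\ell^{a_j}\Z$ with $1 \leq a_j \leq m$ and $\sum a_j = mg$, so $|K_1| = \ell^r$ with $r \geq g$ (from $rm \geq mg$). On the other hand, the standard compatibility of Weil pairings---$\langle\,,\,\rangle_{\lambda',\ell^m}$ restricts to $\langle\,,\,\rangle_{\lambda',\ell}$ on $A'[\ell]\times A'[\ell]$ under the canonical inclusion $\Bmu_\ell \hookrightarrow \Bmu_{\ell^m}$---transfers isotropy of $K$ to isotropy of $K_1$ inside $A'[\ell]$; since any isotropic subgroup of $A'[\ell]$ has order at most $\ell^g$ (as noted just before \eqref{grunt}), we obtain $|K_1| \leq \ell^g$. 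Hence $r = g$ and $K_1 \in \Iso_\ell(\A')$.

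With $K_1$ in hand, Proposition \ref{heard} produces a principally polarized $\A_2 = \A'/K_1 = (A_2,\lambda_2)$ with $[\A_2] \in \sS_{\!g}(p)_0$ together with an $(\ell)^g$-isogeny $\psi_1 : A' \to A_2$ satisfying $\psi_1^*(\lambda_2) = \ell\lambda'$. Since $K_1 \subseteq \ker\psi$, there is a unique $\bar\psi : A_2 \to A$ with $\psi = \bar\psi\circ\psi_1$. Then $\psi_1^*(\bar\psi^*\lambda) = \psi^*\lambda = \ell^m\lambda' = \psi_1^*(\ell^{m-1}\lambda_2)$, and the injectivity of $\psi_1^*$ on symmetric homomorphisms $A_2 \to \hat{A}_2$ (which holds because $\psi_1$ is surjective and $\hat\psi_1$ has finite kernel, forcing any such $\tau$ with $\psi_1^*\tau = 0$ to satisfy $\tau(A_2) \subseteq \ker\hat\psi_1$ and hence $\tau = 0$ by connectedness of $A_2$) yields $\bar\psi^*(\lambda) = \ell^{m-1}\lambda_2$. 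Applying the induction hypothesis to $\bar\psi$ completes the factorization. The principal subtlety---really the only nontrivial ingredient---is the Weil-pairing compatibility used to transfer isotropy from $K$ to $K_1$; everything else is a routine application of Proposition \ref{heard} and degree arithmetic.
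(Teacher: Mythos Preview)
Your inductive strategy matches the paper's, but the core step---showing $K_1 \colonequals K \cap A'[\ell]$ lies in $\Iso_\ell(\A')$---contains a genuine error. The Weil-pairing compatibility you invoke is false: for $m \geq 2$ the restriction of $\langle\,,\,\rangle_{\lambda',\ell^m}$ to $A'[\ell]\times A'[\ell]$ is \emph{trivial}, not equal to $\langle\,,\,\rangle_{\lambda',\ell}$. Indeed, by bilinearity any value $\langle x,y\rangle_{\lambda',\ell^m}$ with $x,y\in A'[\ell]$ is killed by $\ell^2$, hence lies in $\Bmu_{\ell^2}\subseteq\Bmu_{\ell^m}$; but a direct check on a symplectic basis shows it is in fact $1$. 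The correct relation (see the definition of $\langle\,,\,\rangle_{V[\ell]}$ in Theorem~\ref{elect1}) is $\langle x,y\rangle_{\lambda',\ell} = \langle \tilde{x},y\rangle_{\lambda',\ell^m}$ where $\ell^{m-1}\tilde{x}=x$; since $\tilde{x}$ need not lie in $K$, isotropy of $K$ for the $\ell^m$-pairing tells you nothing about isotropy of $K_1$ for the $\ell$-pairing.

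Concretely, take $m=2$ and $\psi=[\ell]:A'\to A'$, so $\psi^\ast(\lambda')=\ell^2\lambda'$. Then $K=\ker\psi=A'[\ell]$ is maximal isotropic in $A'[\ell^2]$ (it has the right order $\ell^{2g}$, and the $\ell^2$-pairing vanishes on it), yet $K_1=K[\ell]=A'[\ell]$ is the entire $\ell$-torsion, on which $\langle\,,\,\rangle_{\lambda',\ell}$ is nondegenerate. So $|K_1|=\ell^{2g}$, not $\ell^g$, and $K_1$ is certainly not isotropic. This is precisely why the paper proves the auxiliary Theorem~\ref{elect1}: one must \emph{find} a maximal $\ell$-isotropic $G$ inside $K[\ell]$ (by an induction on $g$ with a case split on whether $A'[\ell]\subseteq K$), rather than take $K[\ell]$ itself. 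With that $G$ in hand, your last paragraph---factoring through $\A'/G$ and cancelling $\psi_1^\ast$ via torsion-freeness---goes through exactly as the paper does it.
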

Theorem \ref{elect} will follow from the purely algebraic Theorem
\ref{elect1} below.  
\begin{theorem}
\label{elect1}
Let $V$ be a free $\Z/\ell^n\Z$-module of rank $2g$ with a
nondegenerate symplectic pairing 
\[
\langle \,\,\, ,\,\,\,\rangle_V\colon V\times V\longrightarrow \Q/\Z.
\]
Note that there is an induced nondegenerate symplectic pairing
on the $\ell$-torsion $V[\ell]\subseteq V$
\[
\langle \,\,\, ,\,\,\,\rangle_{V[\ell]}\colon V[\ell]\times V[\ell]\longrightarrow
\Q/\Z \quad\text{by}\quad
\langle \bullet , \bullet\rangle_{V[\ell]}=\langle(1/\ell^{n-1})
\bullet, \bullet\rangle_V .
\]
Let $M\subseteq V$ be a maximal isotropic subspace.  Then there exists
$G\subseteq M[\ell]$ such that $G\subseteq V[\ell]$ is maximal isotropic
with respect to $\langle \,\,\, ,\,\,\,\rangle_{V[\ell]}$.
\end{theorem}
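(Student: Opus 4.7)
The plan is to prove that $M[\ell]$ is \emph{coisotropic} inside the symplectic $\F_\ell$-vector space $V[\ell]$---that is, $M[\ell]^{\perp_{V[\ell]}}\subseteq M[\ell]$---and then to invoke Witt's extension theorem to produce a maximal isotropic $G$ of $V[\ell]$ sandwiched between $M[\ell]^{\perp_{V[\ell]}}$ and $M[\ell]$ itself.

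The central computation relates the two pairings. For any $m\in M$, the element $\ell^{n-1}m$ lies in $M[\ell]$, and for any $v\in V[\ell]$, writing $v=\ell^{n-1}w$ for some lift $w\in V$, I compute directly from the definition of $\langle\,,\,\rangle_{V[\ell]}$ and from bilinearity that
\[
\langle v,\ell^{n-1}m\rangle_{V[\ell]}
=\langle w,\ell^{n-1}m\rangle_V
=\ell^{n-1}\langle w,m\rangle_V
=\langle v,m\rangle_V.
\]
(The ambiguity in the lift $w$ lies in $V[\ell^{n-1}]$ and is killed when pairing against an $\ell$-torsion element, so the chain is well-defined.) Granted this, suppose $v\in V[\ell]$ is perpendicular in $V[\ell]$ to all of $M[\ell]$. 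Letting $m$ range over $M$, the identity forces $\langle v,m\rangle_V=0$ for every $m\in M$. Since $M$ is maximal isotropic in $V$, we have $M^{\perp_V}=M$, whence $v\in M\cap V[\ell]=M[\ell]$. This is the desired coisotropy.

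With coisotropy in hand, set $R:=M[\ell]^{\perp_{V[\ell]}}$. The inclusion $R\subseteq M[\ell]=R^{\perp_{V[\ell]}}$ says $R$ is itself isotropic in $V[\ell]$, so Witt's extension theorem lets me enlarge $R$ to a Lagrangian (equivalently, maximal isotropic) subspace $G\subseteq V[\ell]$. The containment $R\subseteq G$ gives $G=G^{\perp_{V[\ell]}}\subseteq R^{\perp_{V[\ell]}}=M[\ell]$, so $G$ is a maximal isotropic subspace of $V[\ell]$ lying inside $M[\ell]$, as required.

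The only point demanding care is the bookkeeping of the two pairings in the key identity---in particular, the well-definedness of $\langle\,,\,\rangle_{V[\ell]}$ independent of the choice of lift, which uses exactly that both arguments are $\ell$-torsion. The only structural input is the Lagrangian property $M=M^{\perp_V}$ of $M$ in $V$; aside from this, the argument is entirely formal linear algebra, and I do not anticipate a substantive obstacle.
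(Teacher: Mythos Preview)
Your proof is correct and takes a genuinely different route from the paper's. The paper argues by induction on $g$: in the base case $g=1$ any line in $M[\ell]$ works, and in the inductive step one splits into two cases according to whether $V[\ell]\subseteq M$ or not; in the latter case one finds a corank-one free submodule $N\subseteq V$ containing $M$, observes $N^\perp\subseteq M$, and descends to the rank-$(2g-2)$ symplectic module $N/N^\perp$ to apply the inductive hypothesis. Your argument instead isolates the single structural fact that $M[\ell]$ is coisotropic in $V[\ell]$, deduced directly from the identity $\langle v,\ell^{n-1}m\rangle_{V[\ell]}=\langle v,m\rangle_V$ together with $M=M^{\perp_V}$; the existence of $G$ then drops out of elementary symplectic linear algebra over $\F_\ell$. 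This avoids both the induction and the case split, and makes transparent exactly which property of $M$ is used (namely $M=M^{\perp_V}$). The paper's approach, on the other hand, is more hands-on and would adapt more readily if one wanted to track additional data through the construction.
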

\begin{proof}
The proof is by induction on $g$.  If $g=1$, let $G$ be any line
in $M[\ell]$. Suppose the statement is true for $g-1$. \\
{\em Case \textup{1.} } $V[\ell]\subseteq M$.  In this case let $G$ be any
maximal isotropic subgroup of $V[\ell]$.\\
{\em Case \textup{2.} } $V[\ell]\not\subseteq M$.  In this case there exists
$N\subseteq V$, $N\cong (\Z/\ell^n\Z)^{2g-1}$, such that $M\subseteq N$.
To see this note that $M$ has at most $2g-1$ generators, lift them
arbitrarily to $\ell^n$-torsion to define $N$.

Note that $N^\perp\cong \Z/\ell^n\Z$ and $\langle N^\perp, M\rangle_V=0$
since $M\subseteq N$.  So $N^\perp\subseteq M$ by maximality.  Apply
the induction hypothesis to $M/N^\perp\subseteq N/N^\perp$:
$N/N^\perp$ is rank $2g-2$ over $\Z/\ell^n\Z$ with a nondegenerate
symplectic pairing induced by $\langle \,\,\, , \,\,\,\rangle_V$.
Also $M/N^\perp$ is isotropic; it is maximal isotropic since if it were
contained in a bigger isotropic subgroup pulling back would contradict the
maximality of $M$.  Hence by the induction hypothesis there exists 
$\tilde{G}\subseteq M$
with $N^\perp\subseteq \tilde{G}$
such that $\tilde{G}/N^\perp\subseteq
(M/N^\perp)[\ell]$ is maximal isotropic.  Now take $G=\tilde{G}[\ell]$.
\end{proof}
{\em Proof of Theorem \textup{\ref{elect}}. }The proof is by induction
on $m$. For $m=1$ the statement follows from Proposition \ref{heard}.
 Suppose the statement is true for $m$ and consider
$\A'=(A,\lambda')\colonequals (A_1,\lambda_1)$, 
$\A=(A,\lambda)\colonequals \A_{m+2}=(A_{m+2}, \lambda_{m+2})$ 
with $[\A],[\A']\in\sS_g(p)_0$ for $g>1$ and an
isogeny $\psi:A'\rightarrow A$ such that $\psi^\ast(\lambda)=\ell^{m+1}\lambda'$
for $m\geq 1$.  By Proposition \ref{heard}, the kernel 
$C\subseteq A'[\ell^{m+1}]$  of $\psi$ is
a maximal $\ell^{m+1}$-isotropic subgroup.  Now apply Theorem \ref{elect1}
to the free $\Z/\ell^{m+1}\Z$-module $A'[\ell^{m+1}]$ with the 
nondegenerate symplectic pairing $\langle\,\,\, ,\,\,\,\rangle_{\lambda',\ell^{m+1}}$.
This shows there exists an $\ell$-maximal isotropic subgroup 
$G\subseteq C[\ell]\subseteq A'[\ell]$ with respect to the nondegenerate
symplectic pairing
$\langle \,\,\, ,\,\,\,\rangle_{\lambda',\ell}$. 

Let  $\A_2$ be
the principally polarized abelian variety $\A_2=(A_2,\lambda_2)\colonequals
\A'/G$ with isogeny $\psi_1=\psi_G\colon A_1\colonequals A'\rightarrow A_2$.
Since $G\subseteq C$, the isogeny $\psi\colon A'\rightarrow A$ 
factors as
\[
\psi\colon A_1=A'\stackrel{\psi_1}{\longrightarrow}A_2\stackrel{\psi'}
{\longrightarrow}A_{m+2}=A.
\]
Note that both $\ell^m \lambda_2$ and $\psi^{\prime \ast}(\lambda_{m+2})$ are
polarizations on $A_2$ which pull back under $\psi_1$ to 
$\ell^{m+1}\lambda_1$.  Since the N\'{e}ron-Severi group of an abelian
variety is torsion-free, this implies that
$\psi^{\prime \ast}(\lambda_{m+2})=\ell^m \lambda_2$. Applying the
induction hypothesis
to $\psi':A_2\rightarrow A_{m+2}$ with $\psi^{\prime \ast}(\lambda_{m+2})=
\ell^m \lambda_2$ now concludes the proof. \qed

\section{The big, little, and enhanced isogeny graphs}
\label{ble}

\subsection{The big isogeny graph \texorpdfstring{\except{toc}{\boldmath{$\GR_{\!g}(\ell,p)$}}\for{toc}{$\GR_{\!g}(\ell,p)$}}{Gr\unichar{"005F}g(\unichar{"2113},p)}}
\label{gr1}

The {\sf big} $(\ell)^g$-isogeny graph $\GR=\GR_{\!g}(\ell,p)$ 
(often called simply ``the isogeny graph'')
is the directed
graph with vertices $\Ver(\GR)= \sS_{\!g}(p)_0=
\{[\A_1=(A, \lambda_1)],\ldots , [\A_h=(A, \lambda_h)]$\} with
$\#\Ver(\GR)=h=h_g(p)$ and $A=E^g/\oFp$.  Its edges are
\begin{equation}
  \label{but}
\Ed(\GR)_{ij}=\{C\in\Iso_\ell(\A_i)\mid [\A_i/C]=
[\A_j]\}
\end{equation}
with $\Iso_\ell(\A)$ as in \eqref{grunt}.
A useful reformulation of \eqref{but} is the following:
Set
\begin{align*}
\Hom(\A_i,\A_j)_\ell &  =\{\text{isogenies $\phi\colon \A_i\rightarrow
  \A_j$ of degree $\ell^g$ such that $\phi^\ast(\lambda_j)=\ell\lambda_i$}\}
\text{ and}\\
\Aut(\A_j) &= \{\text{automorphisms $\psi:A_i\rightarrow A_j$}\}.
\end{align*}
Define the equivalence relation $\sim_{b}$ on $\Hom(\A_i,\A_j)$
by $\phi\sim_b\phi'$ if there is an automorphism $\alpha$
of $\A_j$ such that $\phi'=\alpha\circ \phi$ and set
$\overline{\Hom}(\A_i,\A_j)_\ell=\Hom(\A_i,\A_j)_\ell/\mathord{\sim}_b$.
Then by Proposition \ref{heard} we have
\begin{align}
\nonumber
  \Ed(\GR)_{ij} & =\overline{\Hom}(\A_i,\A_j)_\ell \quad\text{and}\\
\#\Ed(\GR)_{ij} & =\frac
  {\#\Hom(\A_i,\A_j)_\ell}{\#\Aut{(\A_j)}}.\label{but1}
  \end{align}

  We have
$\sum_{j=1}^h\#\Ed(\GR)_{ij}= N_g(\ell)=\prod_{k=1}^g(\ell^k+1)$;
see \eqref{grant}.

\begin{theorem}
\label{adbig}
Let $\cO\subseteq \HH_p$ be the maximal order $\End(E)$ with
$B_g(\ell)$ the Brandt matrix for the maximal order $\cO$. Then
\begin{enumerate}[\upshape (a)]
\item
\label{adbig1}
$\GR_{\!g}(\ell, p)=\BR_{\!g}(\ell,\cO)$,
\item
\label{adbig2}
$\Ad(\GR_{\!g}(\ell, p))=B_{g}(\ell)$, and
\item
  \label{adbig3}
  the big isogeny graph $\GR_{\!g}(\ell,p)$ is regular
  of degree $\N_g(\ell)=\prod_{k=1}^{g}(\ell^k+1)$.
\end{enumerate}
\end{theorem}
\begin{proof}
  The case $g=1$ is classical and well-known: combine Remark \ref{Brandt well}
  with the quaternionic-ideal description of isogenies of supersingular elliptic curves as 
  in, for example, \cite[\S 2]{Gr}.

  So suppose $g>1$. \eqref{adbig1}:
  With $\cO=\cO_{\H_p}$, we have
  \[
  \Ver(\GR_{\!g}(\ell, p))=\Ver(\BR_{\!g}(\ell,\cO))= \sS_{\!g}(p)_0
  \leftrightarrow \cP_{\!g}(\cO)\leftrightarrow
  \overline{\msH}_{g,1}(\cO) \colonequals \msH_{g,1}(\cO)/\SL_g(\cO)
    \]
    using Theorem \ref{gar}\eqref{gar2}
    and the definitions in Sections \ref{gr1} and \ref{rent}.
    With $h=h_g$ and $ \overline{\msH}_{g,1}(\cO)=\{[H_1],
    \ldots , [H_h]\}$ for $H_i\in\msH_{g,1}$, $1\leq i\leq h$,
    as in \eqref{bid}
    we have  $\sS_{\!g}(p)_0=\{[\A_1\colonequals (A,\lambda_{H_1})],
    \ldots, [\A_h\colonequals
      (A, \lambda_{H_h})]\}$ for $A=E^g/\oFp$ by Proposition \ref{only}.
    But now using the notation of Definition \ref{groan}
    and Theorem \ref{Brandt2} we have
    \begin{align}
 \nonumber     \Ed(\GR_g(\ell, p))_{ij} &=\overline{\Hom}(\A_i,\A_j)_\ell\\
 \nonumber      & = \mathbf{U}_\ell(H_i, H_j)^{\text{\rm big}}
 \text{ by Prop.~\ref{degree}}\\
 \label{rated}       &= \Ed(\BR_g(\ell, p))\text{ by Thm.~\ref{Brandt2}}
          \end{align}
Since the edges and vertices of $\GR_{\!g}(\ell, p)$
   and $\BR_{\!g}(\ell,\cO)$ correspond,
   we have $\GR_{\!g}(\ell, p)\cong\BR_{\!g}(\ell,\cO)$.\\
   \eqref{adbig2}: This follows immediately from \eqref{adbig1}
   using \eqref{grinder}.\\
   \eqref{adbig3}:  This follows from \eqref{adbig2}
   by Theorem \ref{hen} \eqref{hen1}.
\end{proof}

Taking the dual isogeny does {\em not} give a well-defined involution on
$\Ed(\GR)$, so the big isogeny graph $\GR_{\!g}(\ell,p)$ is not a graph with opposites.

\subsection{The little isogeny graph \texorpdfstring{\except{toc}{\boldmath{$\gr_{\!g}(\ell,p)$}}\for{toc}{$\gr_{\!g}(\ell,p)$}}{gr\unichar{"005F}g(\unichar{"2113},p)}}
\label{gr2}

The {\sf little} $(\ell)^g$-isogeny graph $\gr=\gr_{\!g}(\ell,p)$ has vertices
$\Ver(\gr)=\sS_{\!g}(p)_0$, so the big graph $\GR$ and the little graph
$\gr$ have the same vertices.  The edges of
$\gr$ are
\begin{equation}
  \label{soon}
\Ed(\gr)_{ij}=\{[C]\in\iso_\ell(\A_i)\mid 
[\A_i/C]=[\A_j]\}
\end{equation}
with $\iso_\ell(\A)=\Iso_\ell(\A)/\sim$ as in \eqref{granted}.
Given an edge $e\in\Ed(\gr)_{ij}$ with
$e=[C]\in \iso_\ell(\A_i)$ we define its
opposite edge $\overline{e}\in\Ed(\gr)_{ji}$   by
$\overline{e}=[\widehat{C}]\in\iso_\ell(\A_j)$ with $\widehat{C}$
the kernel 
of the dual isogeny $\A_j\rightarrow \A_i$.  Note
that this dual is only well-defined up to $\sim$; thus, it is an
operation on $\gr$ (but not $\GR$).  The little graph
$\gr$ is therefore a graph with opposites.  In general $\gr$ is a graph
with half-edges.

Again we can reformulate \eqref{soon} in terms of isogenies.
Recall from Section \ref{gr1} that
\begin{equation}
  \label{soon1}
  \Ed(\GR)_{ij}=\overline{\Hom}(\A_i,\A_j)_\ell;
\end{equation}
an isogeny $\phi\in \Hom(\A_i,\A_j)_\ell$ defines a class
$[\phi]\in\overline{\Hom}(\A_i,\A_j)$. Define an equivalence
relation $\sim_l$ on $\overline{\Hom}(\A_i,\A_j)_\ell$ by
$[\phi]\sim_l [\phi']$ if $[\phi']=[\phi\circ \beta]$ for
$\beta\in\Aut(\A_i)$ and set
\[
\overline{\overline{\Hom}}(\A_i,\A_j)_\ell
=\overline{\Hom}(\A_i,\A_j)_\ell/\sim_l.
\]
Then
\begin{equation}
  \label{soon2}
  \Ed(\gr)_{ij}=\overline{\overline{\Hom}}(\A_i,\A_j)_\ell.
  \end{equation}

\begin{definition1}
\label{weight}
{\rm
Define a weight function $\w$ on the small graph $\gr=\gr_{\!g}(\ell,p)$ by
$\w([\A])=\#\Aut(\A)$ and $\w(C)=\#\Aut(A, \lambda, C)$ for 
the vertex corresponding to $[\A=(A,\lambda)]\in\sS(g,p)$ and
the edge corresponding to $[C]\in\iso_\ell(\A)$, respectively.  Then
$\gr$ is a weighted graph with half-edges.
}
\end{definition1}
\begin{theorem}
\label{smad}
Let $\cO=\End(E)\subseteq \HH_p$ and let 
$B_g(\ell)$ be the Brandt matrix for $\cO$.  Then
\begin{enumerate}[\upshape (a)]
\item
\label{smad1}
$\gr_{\!g}(\ell, p)=\br_{\!g}(\ell, \cO)$ and 
\item
\label{smad2}
$\Ad_{\w}(\gr_{\!g}(\ell,p))=\Ad(\GR_{\!g}(\ell,p))=B_g(\ell)$.
\end{enumerate}
\end{theorem}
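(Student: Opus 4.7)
The strategy mirrors the proof of Theorem \ref{adbig}: establish (a) by descending the bijection of the big graphs through the respective equivalence relations, then deduce (b) immediately.

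For (a), both graphs have vertex set $\sS_{\!g}(p)_0 = \cP_g(\cO)$, identified via the Ibukiyama/Katsura/Oort correspondence and Theorem \ref{t2}. Theorem \ref{adbig}(a) already gives the edgewise bijection $\Ed(\GR_{\!g}(\ell,p))_{ij}\leftrightarrow \mathbf{B}_g(\ell)_{ij}^{\rm big}$: a subgroup $C\in\Iso_\ell(\A_i)$ is matched via Theorem \ref{Brandt2} and Proposition \ref{degree} with a class $[U]_b$, where $U\in\widetilde{\mathbf{B}}_g(\ell)_{ij}$ encodes the isogeny $\A_i\to\A_i/C$. I would then pass to the little-graph quotients. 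Geometrically $\Ed(\gr_{\!g}(\ell,p))_{ij}=\Iso_\ell(\A_i)_{[\A_j]}/\Aut(\A_i)$; algebraically $\mathbf{B}_g(\ell)_{ij}^{\rm little}$ is the further quotient of $\mathbf{B}_g(\ell)_{ij}^{\rm big}$ by the left action of $E_i(g)$. The needed compatibility is the identification $\Aut(\A_i)=E_i(g)$, which follows from Proposition \ref{only}/Lemma \ref{yeast}: under $[\A_i]\leftrightarrow[H_i]$, automorphisms of $\A_i=(A,\lambda_{H_i})$ are exactly matrices $M\in\MM(\cO)$ with $M^\dagger H_iM=H_i$, and Lemma \ref{yeast} (applied with $n=1$, $H_1=H_2=H_i$) puts these in bijection with $E_i(g)$.

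Next I would check that the two additional structures on these graphs respect the bijection. For the opposite involution, geometrically $\overline{[C]}=[\widehat{C}]$ is the kernel class of the dual isogeny $\A_j\to\A_i$, while algebraically the opposite of $[L'_j]_l$ uses the module-theoretic dual of Remark \ref{hat2}. Under the correspondence of Theorem \ref{pumpkin}, the geometric dual isogeny corresponds to passage to $\widehat{L'}$, so the two involutions agree. For weights, on the geometric side $\w([\A_i])=\#\Aut(\A_i)$ and $\w(C)=\#\Aut(A_i,\lambda_i,C)$, while on the algebraic side $\w([L_i])=e_i(g)$ and $\w([U]_l)=e(U)$. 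The vertex equality is the same identification $\Aut(\A_i)=E_i(g)$ from above, and the edge equality follows because $\Aut(A_i,\lambda_i,C)$ is the subgroup of $\Aut(\A_i)$ stabilizing $C$, which under the bijection corresponds exactly to the stabilizer $E(U)\leq E_i(g)$ of $UL_j$.

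Having (a), part (b) is immediate: Proposition \ref{sail} gives $\Ad_\w(\br_{\!g}(\ell,\cO))=B_g(\ell)$, and Theorem \ref{adbig}(b) gives $\Ad(\GR_{\!g}(\ell,p))=B_g(\ell)$, so
\[
\Ad_\w(\gr_{\!g}(\ell,p))=\Ad_\w(\br_{\!g}(\ell,\cO))=B_g(\ell)=\Ad(\GR_{\!g}(\ell,p)).
\]
The only real obstacle is the bookkeeping in (a), namely tracking how a class $[U]\in\widetilde{\mathbf{B}}_g(\ell)_{ij}$ produces a concrete isogeny whose kernel is an element of $\Iso_\ell(\A_i)$, and verifying that the automorphism-stabilizer groups on both sides coincide under this passage; once the dictionary $\Aut(\A_i)\leftrightarrow E_i(g)$ and its refinement to subgroup-stabilizers is in hand, everything else is formal.
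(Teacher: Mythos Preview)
Your approach to (a) is essentially the paper's: chain Lemma~\ref{yeast}/Theorem~\ref{Brandt2}, Proposition~\ref{degree}, and the definitions, then descend through the $\Aut(\A_i)\leftrightarrow E_i(g)$ identification; your explicit verification that opposites and weights match is more careful than the paper's one-line citation, but the content is the same. For (b) you take a slightly different route: you deduce it from (a) together with Proposition~\ref{sail}, whereas the paper argues directly by orbit--stabilizer on the geometric side (each $[C]\in\Ed(\gr)_{ij}$ corresponds to $\#\Aut(\A_i)/\#\Aut(A_i,\lambda_i,C)=\w([\A_i])/\w([C])$ edges of $\GR$), obtaining $\Ad_\w(\gr)=\Ad(\GR)$ without invoking (a), and then applies Theorem~\ref{adbig}. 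Both are correct and equally short; yours has the advantage of making (b) a genuine corollary of (a), while the paper's makes the equality $\Ad_\w(\gr)=\Ad(\GR)$ visible independently of the Brandt-graph identification.
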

\begin{proof}
  \eqref{smad1}: Again the case $g=1$ is classical and follows
  from Remark \ref{Brandt well}.

  So suppose $g>1$. We have
  \[
  \Ver(\gr_g(\ell, p))=\Ver(\GR_g(\ell, p))=\Ver(\BR_g(\ell, p))=
  \Ver(\br_g(\ell,p))
  \]
  from the proof of Theorem \ref{adbig}\eqref{adbig1}.

  We have
      \begin{align}
        \nonumber     \Ed(\gr_g(\ell, p))_{ij} &=
        \overline{\overline{\Hom}}(\A_i,\A_j)_\ell\text{ by \eqref{soon2}}\\
          \nonumber      & = \mathbf{U}_\ell(H_i, H_j)^{\text{\rm little}}
          \text{ by Prop.~\ref{degree}}\\
 \label{rated}       &= \Ed(\br_g(\ell, p))_{ij}\text{ by Thm.~\ref{Brandt2}}.
          \end{align}
Since the edges and vertices of $\gr_{\!g}(\ell, p)$
   and $\br_{\!g}(\ell,\cO)$ correspond,
   we have $\gr_{\!g}(\ell, p)\cong\br_{\!g}(\ell,\cO)$.\\
   \eqref{smad2}: The equality of $\Ad_{\w}(\gr_g(\ell,p))$ and
   $\Ad(\GR_g(\ell,p))$ follows since each edge $[C]\in\Ed(\gr_g(\ell,p))$
   corresponds to a number of edges of $\Ed(\GR_g(\ell,p))_{ij}$
   equal to the size of the orbit of $C$ under $\Aut(\A_i)$ which equals
   \[
   \frac{\#\Aut(\A_i)}{\#\Aut(A_i,\lambda_i,C)}=\frac{\w([\A_i])}{\w([C])}.
   \]
   Now apply Theorem \ref{adbig}\eqref{adbig2}.
\end{proof}

\subsection{The enhanced isogeny graph \texorpdfstring{\except{toc}{\boldmath{$\wgr_{\!g}(\ell,p)$}}\for{toc}{$\wgr_{\!g}(\ell,p)$}}{tilde gr\unichar{"005F}g(\unichar{"2113},p)}}
\label{gr3}
In the notation of Definition \ref{best},
put $h=h_g(p)$ and 
\begin{align*}
\sS_{\!g}(p)_0&=\{[\A_1],\ldots, [\A_h]\}=\{v_1,\ldots , v_h\},\\
\sS_{\!g}(p)_g &=\{[\ell\A_1], \dots, [\ell\A_h]\}=\{v_{h+1},\ldots , v_{2h}\}.
\end{align*}
The {\sf enhanced} $(\ell)^g$-isogeny graph $\wgr=\wgr_{\!g}(\ell,p)$ has
vertices 
\[
\Ver(\wgr)=\sS_{\!g}(p)_0\coprod \sS_{\!g}(p)_g=\{v_1,\ldots, v_h\}\coprod \{v_{h+1},\ldots , v_{2h}\}.
\]
Polarizations of type $g$ are just $\ell$ times a principal
polarization, and thus there is a natural bijection
between $\sS_{\!g}(p)_0$ and $\sS_{\!g}(p)_g$. Nevertheless, they
correspond to distinct vertices of $\wgr$.  For 
$\sS_{\!g}(p)_g\ni[\hat{\A}_i]=[\ell\A_i]=v_{h+i}\in\Ver(\wgr)$ and
$\sS_{\!g}(p)_0\ni[\A_j]=v_j\in\Ver(\wgr)$, the edges of $\wgr$ from
$v_{h+i}$ to $v_j$ are
\[
\Ed(\wgr)_{h+i,j}=\{[C]\in \iso_\ell(\A_i)\mid [\A_i/C]=[\A_j]\}
\]
with notation as in \eqref{granted}.
For $\sS_{\!g}(p)_0\ni[\A_i]=v_i\in\Ver(\wgr)$ and $\sS_{\!g}(p)_g
\ni[\hat{\A}_j]=[\ell\A_j]=v_{h+j}\in\Ver(\wgr)$, the edges of $\wgr$ from
$v_i\in\Ver(\wgr)$ to $v_{h+j}\in\Ver(\wgr)$ are
\[
\Ed(\wgr)_{i, h+j}=\{[\hat{C}]\in \iso_\ell(\hat{\A}_i)\mid 
[\hat{\A}_i/\hat{C}]=[\hat{\A}_j]\}
\]
with $\hat{\A}$ denoting the $[\ell]$-dual of $\A$
as in Definition \ref{hiro}.  
In case $1\leq i,j\leq h$ or $h+1\leq i,j\leq 2h$, $\Ed(\wgr)_{ij}=\emptyset$.

The enhanced isogeny graph $\wgr$ is a graph with opposites: If 
$e\in\Ed(\wgr)_{ij}$ the opposite edge $\overline{e}\in\Ed(\wgr)_{ji}$
is the equivalence class of the dual isogeny.
We never have $\overline{e}=e$, so $\wgr$ is a graph without
half-edges.  The graph $\wgr$ is a graph with weights: define $\w$
as the order of the automorphism group as for $\gr$.

\begin{theorem}
\label{dealing}
\begin{enumerate}[\upshape (a)]
\item
\label{dealing1}
The enhanced isogeny graph $\wgr=\wgr_{\!g}(\ell, p)$ is the bipartite double
cover of the little isogeny graph $\gr=\gr_{\!g}(\ell,p)$ with inherited weights.
\item
\label{dealing2}
Let $A=\Ad(\gr)$ and $A_{\w}=\Ad_{\w}(\gr) =\Ad(\GR_{\!g}(\ell,p))$.  Then
\[
\Ad(\wgr)=\begin{bmatrix}0 & A\\A & 0\end{bmatrix}
\quad\text{and}\quad \Ad_{\w}(\wgr)=
\begin{bmatrix} 0 & A_{\w}\\ A_{\w} &0\end{bmatrix}=
\begin{bmatrix} 0 & B_g(\ell)\\B_g(\ell) & 0\end{bmatrix}.
\]
\end{enumerate}
\end{theorem}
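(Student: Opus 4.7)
The plan is to prove (a) directly from the construction of $\wgr$ and then read off (b) as an immediate matrix translation.

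For (a), I will exhibit an explicit isomorphism from the bipartite double cover of $\gr_{\!g}(\ell,p)$ onto $\wgr_{\!g}(\ell,p)$ that preserves weights. First, by Definition~\ref{best}\eqref{best3}, the assignment $[\A]\mapsto[\ell\A]$ is a canonical bijection $\sS_{\!g}(p)_0\stackrel{\sim}{\to}\sS_{\!g}(p)_g$, so $\Ver(\wgr)$ is naturally two copies of $\Ver(\gr)$; the construction of $\wgr$ ensures no edges lie within either side, so $\wgr$ is bipartite with the correct vertex set. Matching edges then reduces to two checks. (i) Tautologically, $\Ed(\wgr)_{h+i,j}=\{[C]\in\iso_\ell(\A_i):[\A_i/C]=[\A_j]\}=\Ed(\gr)_{ij}$. (ii) For $\Ed(\wgr)_{i,h+j}$, I will exploit that $\hat{\A}_i=\ell\A_i$ shares the underlying abelian variety with $\A_i$: any isogeny $\phi:A_i\to A_j$ satisfies $\phi^\ast(\ell\lambda_j)=\ell^2\lambda_i$ if and only if $\phi^\ast(\lambda_j)=\ell\lambda_i$, since the N\'{e}ron--Severi group of $A_i$ is torsion-free. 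Hence an $(\ell)^g$-isogeny $\hat{\A}_i\to\hat{\A}_j$ is the same data as an $(\ell)^g$-isogeny $\A_i\to\A_j$, yielding a canonical bijection $\Ed(\wgr)_{i,h+j}\stackrel{\sim}{\to}\Ed(\gr)_{ij}$. The opposite structure of $\wgr$, defined via the dual isogeny, is compatible with that of $\gr$ under these identifications, and the weights transfer because $\Aut(\A)=\Aut(\ell\A)$ (again by torsion-freeness of $\NS$), so $\w([\A])=\#\Aut(\A)$ and $\w([C])=\#\Aut(A,P,C)$ have the same meaning on both sides.

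For (b), once (a) is known, I will order $\Ver(\wgr)$ as $(v_1,\ldots,v_h,v_{h+1},\ldots,v_{2h})$, with the $0$-side first and the $g$-side second. The bipartite structure forces the diagonal blocks of both $\Ad(\wgr)$ and $\Adw(\wgr)$ to vanish, and by (i)--(ii) each off-diagonal block reduces to $\Ad(\gr)=A$, respectively $\Adw(\gr)=A_w$; by Theorem~\ref{smad}\eqref{smad2} the latter equals $B_g(\ell)$, giving the displayed block forms.

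The main obstacle is the bijection in (ii): one must extend the notions of maximal isotropic subgroup, $\iso_\ell$, and automorphism group cleanly from a principal polarization $\A$ to its $[\ell]$-dual $\hat{\A}=\ell\A$ and verify that the quotient operation and the equivalence under automorphisms transfer intact. The torsion-freeness of $\NS$ (yielding $\Aut(\A)=\Aut(\ell\A)$) together with the scaling equivalence of the polarization pullback condition are the two inputs that make this transfer routine; once they are in place, the rest is bookkeeping.
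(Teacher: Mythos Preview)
Your proposal is correct and follows essentially the same approach as the paper. The paper packages the argument slightly differently---it defines an involution $\iota$ on $\wgr$ sending $[\A]\mapsto[\hat{\A}]$ on vertices and matching an edge labeled $[C]$ in $\Ed(\wgr)_{ij}$ with the edge labeled by the same $[C]$ in $\Ed(\wgr)_{i+h,\,j+h}$ (indices mod $2h$), then observes $\iota$ is fixed-point free and $\wgr/\iota=\gr$---but this is exactly the identification you spell out in (i) and (ii), and your torsion-freeness justification for why $\Aut(\A)=\Aut(\ell\A)$ and why the edge sets match is the content the paper leaves implicit.
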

\begin{proof}
\eqref{dealing1}:
Let $\iota:\wgr\rightarrow \wgr$ be the involution defined 
on vertices by $\iota([\A])=[\hat{\A}]$ and on edges 
such that if $e\in\Ed(\wgr)_{ij}$ corresponds to the class $[C]$,
then $\iota(e)\in\Ed(\wgr)_{i+h,j+h}$ (where the indices are added mod
$2h$) also corresponds to the class $[C]$.
Then $\iota$ fixes
no vertices and no edges of $\wgr$ and $\wgr/\iota=\gr$.
Thus the enhanced graph $\wgr$ is the bipartite double
cover of the little graph $\gr$. \\
\eqref{dealing2}:
Given \eqref{dealing1}, the adjacency matrices for $\wgr$ now follow from
Theorems \ref{adbig} and \ref{smad}.
\end{proof}

\section{Connectedness results for isogeny graphs}
\label{co}

\subsection{Connectedness for \texorpdfstring{\except{toc}{\boldmath{$g=1$}}\for{toc}{$g=1$}}{g=1}: supersingular elliptic curves}
\label{el}

It is  well known  that the $\ell$-isogeny graph for
supersingular elliptic curves in characteristic $p$
is connected.  A standard proof of this result relies on the fact that
integral primitive quaternary quadratic forms represent all sufficiently
large integers.  There is another proof by Serre~\cite[p.~223]{M}
using that the space
of Eisenstein series of weight $2$ for the congruence subgroup
$\Gamma_{0}(p)$ is $1$-dimensional. In this section we give the
proof using Theorem \ref{sun} on strong approximation. 
As a byproduct we get
that $\GR_1(\ell, p)$ and $\gr_1(\ell, p)$ are not
bipartite.  This in turn enables us to conclude that the
enhanced isogeny graph $\wgr_1(\ell, p)$ is connected.

Let $E/\bFp$, $E'/\bFp$ be supersingular elliptic curves
with $\cO=\cO_E=\End(E)$ and $\cO'=\cO_{E'}=\End(E')$ maximal
orders in $\HH_p$.  Then $\Hom(E,E')$ is an ideal
in $\HH_p$ with left order $\cO'$ and right order $\cO$.  
\begin{lemma}
\label{birdy}
If $\psi\in\Hom(E',E)$ has degree $\deg \psi=x\neq 0$, then the right
$\cO$-ideal 
\[
I=\{\psi\circ\phi\mid \phi\in\Hom(E,E')\}\subseteq \cO
\]
has reduced norm
$x$.
\end{lemma}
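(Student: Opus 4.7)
The strategy is to verify that $I$ is a right $\cO$-ideal and then compute the index $[\cO : I]$, which equals $\N(I)^2$ for the maximal order $\cO$. That $I$ is a right ideal is immediate: for $\alpha = \phi\circ\psi \in I$ and $\beta \in \cO = \End(E)$, we have $\alpha \circ \beta = \phi \circ (\psi \circ \beta) \in \phi \Hom(E,E') = I$ since $\psi\circ\beta \in \Hom(E, E')$. So the substance of the proof is to show $[\cO : I] = x^2$.

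The key tool is the dual isogeny $\hat\phi \in \Hom(E, E')$, which satisfies $\phi\hat\phi = [x]_E$ and $\hat\phi\phi = [x]_{E'}$. I would first establish the characterization: for $\alpha \in \cO$,
\[
\alpha \in I \iff \hat\phi\alpha \in x\Hom(E,E') \iff \alpha(E[x]) \subseteq \ker\hat\phi.
\]
The first equivalence is direct---$\alpha = \phi\psi$ gives $\hat\phi\alpha = \hat\phi\phi\psi = x\psi$, and conversely $\hat\phi\alpha = x\beta$ yields $x\alpha = \phi\hat\phi\alpha = x\phi\beta$, so $\alpha = \phi\beta \in I$. The second is the standard factorization fact: an isogeny $E \to E'$ vanishes on $E[x]$ precisely when it factors through $[x]: E \to E$, i.e., is divisible by $x$ in $\Hom(E,E')$.

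With this in hand, I would introduce the evaluation map $\mu: \cO \to \Hom(E[x], E[x]/\ker\hat\phi)$ sending $\alpha$ to $(\alpha|_{E[x]}) \bmod \ker\hat\phi$; by the characterization above, $\ker\mu = I$. When $\gcd(x, p) = 1$, the natural map $\cO/x\cO \to \End(E[x])$ is an isomorphism (injective since $\cO$ acts faithfully on $E[x]$, and both sides have $x^4$ elements), so $\mu$ factors through $\End(E[x])$ and is surjective because any homomorphism $E[x] \cong (\Z/x\Z)^2 \to E[x]/\ker\hat\phi$ lifts by choosing preimages of the images of two generators. Since $|E[x]/\ker\hat\phi| = x$, the target has $x^2$ elements, so $[\cO:I] = x^2$ and $\N(I) = x$. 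The main obstacle will be the case $\gcd(x, p) > 1$: there $E[x]$ is a non-\'etale finite flat group scheme and one must invoke Dieudonn\'e theory to extend the identification $\cO/x\cO \cong \End(E[x])$ and the counting to the $p$-primary part, or else reduce to the coprime case by multiplicativity of the reduced norm computed prime-by-prime.
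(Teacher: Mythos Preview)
Your argument for $\gcd(x,p)=1$ is correct and is essentially the paper's treatment of the separable case: both identify $I$ via a factorization criterion and deduce $[\cO:I]=x^2$ by a local count (the paper works prime-by-prime on $\cO\otimes\Z_q$, you package this as surjectivity of the evaluation map onto $\Hom(E[x],E[x]/\ker\hat\phi)$; these are the same computation).

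Where you diverge is the case $p\mid x$. You correctly flag this as the obstacle and propose Dieudonn\'e theory or a prime-by-prime reduction, but leave it unexecuted. The paper instead gives a short elementary trick that avoids both. Pick any \emph{separable} isogeny $\phi'\colon E'\to E$, say of degree $x'$, set $\beta=\phi\circ\hat{\phi'}\in\cO$, and let $I'=\phi'\Hom(E,E')$. Then
\[
\beta I'=\phi\hat{\phi'}\phi'\Hom(E,E')=x'\,\phi\Hom(E,E')=x'I,
\]
so $I=(\beta/x')I'$. Since $\N(\beta)=\deg\beta=xx'$ and $\N(I')=x'$ by the separable case already established, multiplicativity of the reduced norm gives $\N(I)=(xx'/x'^{\,2})\cdot x'=x$. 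This reduction is cleaner than invoking Dieudonn\'e modules and is worth knowing: it turns the inseparable case into pure ideal arithmetic once the separable case is in hand.
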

\begin{proof}
We begin with the case when $\psi$ is separable.  Then we
have $$I=\{\alpha\in\End(E)=\cO \mid \widehat{\alpha}(\ker
\hat{\psi})=0\}.$$ Thus for each prime power $\ell^k\Vert x$, $I\otimes \Z_\ell$
is of index $\ell^{2k}$ in $\cO\otimes \Z_\ell$.
Combining these together we see that $I$ has index
$x^2$ in $\cO$ and hence has reduced norm $x$.

Now suppose $\psi$ is inseparable.  Let $\psi'$ be a separable map from
$E'$ to $E$ of degree $x'$.  Let $\beta=\psi\circ\widehat{\psi}'$.
Let
\[
I'=\{\psi'\circ\phi\mid \phi\in\Hom(E,E')\}\subseteq \cO.
\]
Then by the above case $I'$ has reduced norm $x'$.  Also
$I=\frac\beta{x'}I'$ and taking norms of both sides we get that the
reduced norm of $I$ is $x$.
\end{proof}

\begin{theorem}
\label{g1}
Let $\ell\ne p$ be prime.  
\begin{enumerate}[\upshape (a)]
\item
\label{gl1}
The big isogeny graph $\GR_1(\ell ,p)$ and the little
isogeny graph $\gr_1(\ell,p)$   for supersingular elliptic curves  
are connected.
\item
\label{gl2}
The graphs $\GR_1(\ell, p)$ and $\gr_1(\ell, p)$
are not bipartite,
i.e., given any two supersingular elliptic curves $E$
and $E'$ in characteristic $p$, there exists an isogeny $\phi:
E\to E'$ such that the degree of $\phi$ is an even power of $\ell$.
\item
\label{gl3}
The enhanced isogeny graph $\wgr_1(\ell, p)$ is connected.
\end{enumerate}
\end{theorem}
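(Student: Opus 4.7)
The plan is to reduce (a) and (b) to the strong-approximation consequences in Lemmas \ref{l0} and \ref{l1}, and to obtain (c) as a graph-theoretic corollary. Throughout, I identify supersingular elliptic curves $E_i/\bFp$ with right $\cO$-ideal classes $[I_i]\in\cP_1(\cO)$ via $I_i=\Hom(E,E_i)$, so the common vertex set of $\GR_1(\ell,p)$ and $\gr_1(\ell,p)$ is $\cP_1(\cO)$; by Lemma \ref{birdy}, a walk of length $k$ from $[\cO]$ to $[I_i]$ corresponds to a representative of $[I_i]$ that is an integral right $\cO$-ideal of norm $\ell^k$.

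For (a), I want to show every right $\cO$-ideal class admits a representative of norm a power of $\ell$. Given a right $\cO$-ideal $I$ of norm $n$, pick $\gamma\in\HH_p^\times$ with $\N(\gamma)=n$; such a $\gamma$ exists because the reduced-norm form on a definite rational quaternion algebra is positive definite and represents every positive integer (by Hasse--Minkowski, using surjectivity of the local reduced norm at each finite place). Then $\gamma^{-1}I$ is a fractional right $\cO$-ideal of norm $1$, and Lemma \ref{l0} supplies $\beta\in(\gamma^{-1}I)\otimes\Z[1/\ell]$ with $\N(\beta)=1$. Setting $\alpha=\gamma\beta\in I\otimes\Z[1/\ell]$ gives $\N(\alpha)=n$, and a local-norm comparison at each prime $q\ne\ell$ forces $\alpha\cO_q=I_q$. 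Hence $\alpha^{-1}I$ agrees with $\cO$ at every prime away from $\ell$; clearing $\ell$-denominators produces an integral right $\cO$-ideal in the class $[I]$ of norm a power of $\ell$, which by Lemma \ref{birdy} gives the desired chain of $\ell$-isogenies from $E$ to $E_i$.

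For (b), it suffices to exhibit a closed walk of odd length at some vertex, i.e.\ an $\alpha\in\cO$ of norm an odd power of $\ell$. Apply Lemma \ref{l1} to obtain $\beta\in\cO[1/\ell]$ with $\N(\beta)=\ell$, and let $M\ge 0$ be minimal with $\alpha:=\ell^M\beta\in\cO$, whence $\N(\alpha)=\ell^{2M+1}$. The endomorphism $\alpha:E\to E$ factors into $2M+1$ successive $\ell$-isogenies, giving a closed walk of odd length at $[E]$ in both $\GR_1(\ell,p)$ and $\gr_1(\ell,p)$; these graphs are therefore not bipartite. Combining this odd closed walk with any walk between two supersingular curves (supplied by (a)) produces walks of both parities between any two vertices, and in particular an isogeny of even $\ell$-power degree, as required.

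For (c), Theorem \ref{dealing}\eqref{dealing1} identifies $\wgr_1(\ell,p)$ as the bipartite double cover of $\gr_1(\ell,p)$, and the bipartite double cover of a connected graph is connected if and only if the base graph is not bipartite; parts (a) and (b) supply both hypotheses. The main obstacle is the local-norm identification $\alpha\cO_q=I_q$ in (a), which translates the abstract statement of Lemma \ref{l0} into a concrete chain of $\ell$-isogenies; once this is in hand, everything else is bookkeeping or formal graph theory.
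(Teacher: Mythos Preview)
Your proof is correct and follows essentially the same route as the paper's: both reduce to Lemma~\ref{l0} (strong approximation for $\HH_1^\times$) to produce an element of $I\otimes\Z[1/\ell]$ of the right norm, and both deduce (c) from Theorem~\ref{dealing}\eqref{dealing1}. The only organizational difference is that the paper handles (a) and (b) in a single stroke---it directly finds $\phi:E\to E'$ of degree $\ell^{2n}$, which simultaneously gives connectedness and the even-power statement---whereas you prove (a) first via an $\ell$-power-norm representative and then (b) separately via an odd closed walk from Lemma~\ref{l1}; your (a) argument in fact already yields norm $\ell^{2m}$ if you track the exponent, making your separate (b) step pleasant but redundant.
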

\begin{proof}
  (\ref{gl1}, \ref{gl2}): Let $E=E/\oFp$ and $E'=E'/\oFp$ be any
  two supersingular elliptic curves.
By Tate's theorem $E$ and $E'$ are isogenous.  Hence there exists
an isogeny $\psi\in\Hom(E',E)$ with some degree $x\ne0$.  Consider
the right ideal $I\subset\cO_E$ defined by $I=\{\psi\circ\phi \mid
\phi\in\Hom(E,E')\}$; $I$ has reduced norm $x$ by 
Lemma \ref{birdy}.  Let $\alpha\in\H_p$ be an element of
norm $x$; such an $\alpha$ exists by the Hasse-Minkowski theorem.
Then the fractional right ideal $I_1=\alpha^{-1}I$ has norm $1$.

Now by Lemma \ref{l0}, there exists an element $\beta\in
I_1\otimes\Z[1/\ell]$ of norm $1$.  Let $\ell^n$ be a sufficiently
high power of $\ell$ so that $\ell^n\beta\in I_1$.  Then
$\alpha\ell^n\beta\in I$ and thus is equal to $\psi\circ\phi$ for some
$\phi\in\Hom(E,E')$.

Taking the equation $\alpha\ell^n\beta=\psi\circ \phi$ and computing
norms/degrees, we obtain 
\[
\Norm_{\HH_p/\Q}(\alpha)\ell^{2n}\Norm_{\HH_p/\Q}(\beta)=
\deg(\psi)\deg(\phi).
\]
Since $\Norm_{\HH_p/\Q}(\alpha)=\deg(\psi)=x$ and $\Norm_{\HH_p/\Q}(\beta)=1$,
we see that the degree of $\phi$
is $\ell^{2n}$. Hence $\GR_1(\ell,p)$ and $\gr_1(\ell, p)$ are
connected and not bipartite.\\
\eqref{gl3}:
Since $\gr_1(\ell,p)$ is connected and not bipartite, its bipartite
double cover $\wgr_1(\ell, p)$ (see Theorem \ref{dealing}\eqref
{dealing1}) is connected.
\end{proof}

\subsection{Connectedness for \texorpdfstring{\except{toc}{\boldmath{$g>1$}}\for{toc}{$g>1$}}{g>1}}
\label{hi}
We now consider the higher-dimensional case; henceforth
suppose $g>1$. Here we deduce the connectedness of the
isogeny graphs from strong approximation for the quaternionic
unitary group.  Strong
approximation in this context has previously been applied
 to questions of moduli of abelian varieties
in characteristic $p$: applications to Hecke orbits are in 
Chai/Oort \cite[Prop.~4.3]{CO}
and applications to the geometry
of stratifications are in  Ekedahl/Oort \cite[\S 7]{EO}; see also
Chai \cite[Prop.~1]{Ch}. In particular, Theorem \ref{g>1} below
should be compared with Ekedahl/Oort's version of strong approximation
in
\cite[Lemma 7.9]{EO}. Combining strong approximation
with Proposition \ref{degree} 
and Theorem \ref{elect} shows
 that the isogeny graphs $\GR_{\!g}(\ell,p)$
and $\gr_{\!g}(\ell,p)$ are connected.
Our strong approximation
argument further implies that  $\GR_{\!g}(\ell, p)$ and
$\gr_{\!g}(\ell ,p)$ are not bipartite.  This in turn is used to
show that the enhanced isogeny graph $\wgr_{\!g}(\ell, p)$ is connected
--- analogously to the $g=1$ argument of Theorem \ref{g1}.
Note that \cite[\S 7]{EO} treats inseparable isogenies
of superspecial abelian varieties which we do not consider here.

Let $\H/\Q$ be an arbitrary rational definite quaternion algebra with
maximal order $\cO_\H$.

\begin{theorem}
\label{g>1}
\text{\rm (cf. \cite[Lemma 7.9]{EO}) }
Let $\ell$ be a prime unramified in $\H$. Then given any
two positive-definite Hermitian matrices $H, H'\in\MM (\cO_{\HH})$
of reduced norm $1$, there exists
a matrix $M\in\MM (\cO_{\HH})$ such that
\begin{equation}
\label{oyster}
M^\dagger HM=\ell^{2n}H'
\end{equation}
for some positive integer $n$.
\end{theorem}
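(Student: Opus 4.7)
The plan is to apply strong approximation for the quaternionic unitary group $\U_H$. Consider the affine $\Q$-variety
\[
X \colonequals \{M \in \Mat_{g\times g} : M^\dagger H M = H'\},
\]
which is naturally a left torsor for $\U_H$ under $V\cdot M = VM$. I will show that $X$ has a $\Q$-point which is $\cO_\H$-integral at every prime other than $\ell$, then clear the resulting $\ell$-denominators to obtain an integral $M$ with $M^\dagger H M = \ell^{2n}H'$.

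First, I would establish that $X$ has local points at every place of $\Q$. At each finite prime $q$, Lemma \ref{l2} factors $H = M_q^\dagger M_q$ and $H' = N_q^\dagger N_q$ with $M_q, N_q \in \MM(\cO_\H\otimes\Z_q)$; the reduced-determinant-one hypotheses force these factors to lie in $\SL_g(\cO_\H\otimes\Z_q)$, so $U_q \colonequals M_q^{-1}N_q$ lies in $\SL_g(\cO_\H\otimes\Z_q)$ and satisfies $U_q^\dagger H U_q = H'$, witnessing a point of $X(\cO_\H\otimes\Z_q)$. At the archimedean place, any two positive-definite hermitian forms over $\H\otimes\R$ of the same reduced determinant are equivalent, so $X(\R)\ne\emptyset$. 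Now $\U_H$ is simply connected, semisimple, and $\Q$-simple (it is a form of $\Sp_{2g}$). By the Kneser--Harder Hasse principle, $H^1(\Q, \U_H)$ injects into $\prod_v H^1(\Q_v, \U_H)$; since $X$ is a $\U_H$-torsor locally trivial at every place, it is globally trivial. Fix $M_0 \in X(\Q)$; this yields a $\U_H$-equivariant isomorphism $\U_H \to X$, $V \mapsto VM_0$.

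To finish, I apply Theorem \ref{sun}. Since $\ell$ is unramified in $\H$, we have $\U_H(\Q_\ell) \cong \Sp_{2g}(\Q_\ell)$, which is non-compact, so strong approximation holds for $\U_H$ with $S = \{\infty, \ell\}$. Translating the density statement back to $X$ via $M_0$ and using the local solutions $U_q$ from the previous step as the targets at each $q\ne\ell$, I obtain $M \in X(\Q)$ with $M \in X(\cO_\H\otimes\Z_q)$ for every finite $q \ne \ell$. Hence $M \in \MM(\cO_\H)\otimes\Z[1/\ell]$ and $M^\dagger H M = H'$. Choosing $n \ge 1$ so that $\tilde M \colonequals \ell^n M$ lies in $\MM(\cO_\H)$, we conclude $\tilde M^\dagger H \tilde M = \ell^{2n}H'$, as required. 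The main subtlety is the local-to-global passage: the Hasse-principle step relies crucially on $X(\R)\ne\emptyset$ (ensured by the positivity of $H$ and $H'$), while the subsequent integrality away from $\ell$ relies on $\U_H$ being both simply connected $\Q$-simple and non-compact at $\ell$, which is exactly the setting where Theorem \ref{sun} applies.
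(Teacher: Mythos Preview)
Your proof is correct and follows essentially the same architecture as the paper's: obtain a rational point of $X=\{M:M^\dagger HM=H'\}$, apply strong approximation for the quaternionic unitary group with $S=\{\infty,\ell\}$ to make it integral away from $\ell$, then clear $\ell$-denominators. The one substantive difference is how you produce the rational point. You invoke the Kneser--Harder Hasse principle for torsors under simply connected groups, which is valid but heavier than necessary here: Lemma~\ref{l2} is stated over the localization $\cO_\H\otimes\Z_{(q)}\subset\H$, not just over the completion, so applying it at any single prime $q$ already yields \emph{rational} factorizations $H=M_0^\dagger M_0$ and $H'=N_0^\dagger N_0$ with $M_0,N_0\in\MM(\H)$, and hence $M_0^{-1}N_0\in X(\Q)$ directly. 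The paper exploits this (after first reducing to $H'=I$), bypassing Galois cohomology entirely. Your route is more conceptual and would generalize to situations where an explicit rational point is not handed to you; the paper's route is more elementary and self-contained given the tools already on the table.
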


\begin{proof}
Let
$M_0\in\MM(\HH)$ satisfy $M_0^\dagger M_0=H$; such an $M_0$
exists by Lemma \ref{l2}.  By the same lemma, we can assume that
$H^\prime=I$.

We are now ready to apply the strong approximation Theorem
\ref{sun}.  Let $\Aa$ be the ad\`{e}les of $\Q$ and $G$
be the quaternionic unitary group 
\[
G=\rU_g(\HH)=\{M\in\MM (\H) \mid M^\dagger
M=\Id_{g\times g}\}.  
\]
The quaternionic unitary group $G$ is the compact real form
of $\Sp_{2g}$, so is simple.

Let $S=\{\ell,\infty\}$
and set 
\begin{equation}
\label{read}
U=\{M\in G(\Aa)\mid (M_0^{-1}M)_q\in\MM(\cO_{\HH}\otimes \Z_q)\text{ for }
q\neq \ell\}.
\end{equation}
By Lemma \ref{l2} there exists $N_q\in\MM(\cO_{\HH}\otimes \Z_{(q)})$
such that $H=N_q^\dagger N_q$.  Then 
\[
M_0N_q^{-1}\in\rU_g(\cO_{\HH}\otimes
\Q_q)=G(\Q_q).
\]
The set $U\subseteq G(\Aa)$ in \eqref{read} is open and nonempty
since $(M_0N_q^{-1}, M_p)_{q\notin S,\, p\in S}\in U$ for $M_p$ arbitrary.

Hence by strong approximation (Theorem \ref{sun}) there exists
$M'\in\MM(\HH)$ such that $M'^\dagger M'=\Id_{g\times g}$ and
$M_0^{-1}M'\in\MM(\cO_{\HH}[1/\ell])$.  Let $\ell^n$ be a sufficiently
high power of $\ell$ such that $\ell^n M_0^{-1}M'\in\MM(\cO_{\HH})$.
Let $M=\ell^nM_0^{-1}M'$.  Then
\begin{equation*}
  M^\dagger HM=M^\dagger M_0^\dagger M_0M=\ell^{2n}M'^\dagger M'
  =\ell^{2n}\Id_{g\times g}.\
  \end{equation*}
\end{proof}

\begin{theorem}
\label{fun}
Let $\ell\ne p$ be prime, $g>1$, $A=E^g$, and $\cO=\cO_E=\End(E)$.
\begin{enumerate}[\upshape (a)]
\item
\label{fun1}
The big isogeny graph $\GR_{\!g}(\ell,p)$ and the little isogeny
graph $\gr_{\!g}(\ell,p)$ are connected.
\item
\label{fun2}
The graphs $\GR_{\!g}(\ell, p)$ and $\gr_{\!g}(\ell, p)$ are not bipartite,
i.e., given any
two principal polarizations of $A$, $\lambda_H$ and $\lambda_{H'}$ with $H,H'\in\SL_g(\cO)$
positive-definite Hermitian matrices, there exists
a path on each graph from the vertex $[\A'=(A, \lambda_{H'})]$ to the
vertex $[\A=(A,\lambda_H)]$ of even length.
\item
\label{fun3}
The enhanced isogeny graph $\wgr_g(\ell, p)$ is connected.
\end{enumerate}
\end{theorem}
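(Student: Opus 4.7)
The plan is to derive all three parts from strong approximation (Theorem \ref{g>1}), the polarization–matrix dictionary (Proposition \ref{degree}), and the isogeny-factorization theorem (Theorem \ref{elect}), with Theorem \ref{dealing} bridging (a)--(b) to (c).

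For part (a), I would fix two vertices $[\A] = [(A, \lambda_H)]$ and $[\A'] = [(A, \lambda_{H'})]$ of $\GR_g(\ell,p)$, with $H, H' \in \msH_{g,1}(\cO)$ the positive-definite hermitian matrices of reduced determinant $1$ corresponding to them under Proposition \ref{only}. Theorem \ref{g>1} then supplies $M \in \MM(\cO)$ and a positive integer $n$ with $M^\dagger H M = \ell^{2n} H'$. By Proposition \ref{degree}, the isogeny $\phi : A \to A$ defined by $M$ satisfies $\phi^{\ast}(\lambda_H) = \ell^{2n}\lambda_{H'}$, i.e., it is a polarization-scaling isogeny $(A,\lambda_{H'}) \to (A,\lambda_H)$, and Theorem \ref{elect} factors $\phi$ as a chain of $2n$ successive $(\ell)^g$-isogenies of principally polarized superspecial abelian varieties. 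This is precisely a walk of length $2n$ in $\GR_g(\ell,p)$ from $[\A']$ to $[\A]$, so $\GR_g(\ell,p)$ is connected; the little graph $\gr_g(\ell,p)$ inherits connectedness since it has the same vertex set and every edge of $\GR_g(\ell,p)$ descends to an edge of $\gr_g(\ell,p)$.

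For part (b), the essential feature of the walks constructed in (a) is that their length $2n$ is always \emph{even}. Fix any vertex $[\A]$ of $\GR_g(\ell,p)$; since $[\A]$ has $\prod_{k=1}^g(\ell^k + 1) \ge 1$ outgoing edges, pick any edge $e : [\A] \to [\A_1]$, and apply part (a) to produce an even-length walk from $[\A_1]$ back to $[\A]$. Concatenating $e$ with this walk yields a closed walk at $[\A]$ of odd length, which forces the underlying undirected graph to contain an odd cycle and hence to be non-bipartite. The same conclusion transfers to $\gr_g(\ell,p)$ by projecting edges from $\GR_g(\ell,p)$ to $\gr_g(\ell,p)$, since an odd closed walk projects to an odd closed walk. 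For part (c), Theorem \ref{dealing}(a) identifies $\wgr_g(\ell,p)$ as the bipartite double cover of $\gr_g(\ell,p)$; a standard fact is that the bipartite double cover of a connected graph is itself connected if and only if the base graph is non-bipartite, so parts (a) and (b) for $\gr_g(\ell,p)$ conclude the argument.

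The substantive analytic content has already been absorbed into Theorem \ref{g>1}, so what remains is essentially bookkeeping across the three graphs. The one point requiring a little care is the undirected-versus-directed distinction in (b): strong approximation supplies directed walks in $\GR_g(\ell,p)$, while bipartiteness is a property of the underlying undirected graph, and the step that closes the argument is the elementary observation that an odd closed walk in a directed graph descends to an odd closed walk in its underlying undirected graph, which must contain an odd cycle.
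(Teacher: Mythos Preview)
Your proposal is correct and follows essentially the same route as the paper: strong approximation (Theorem~\ref{g>1}) to get $M^\dagger HM=\ell^{2n}H'$, Proposition~\ref{degree} to translate this to $\phi^\ast(\lambda_H)=\ell^{2n}\lambda_{H'}$, Theorem~\ref{elect} to factor into a length-$2n$ chain of $(\ell)^g$-isogenies, and Theorem~\ref{dealing}\eqref{dealing1} for part~(c). The only difference is that in~(b) you add an explicit odd-closed-walk argument to witness non-bipartiteness, whereas the paper simply observes that the walks produced in~(a) already have even length $2n$, which is exactly what the statement of~(b) asks for; your extra step is correct but not needed.
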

\begin{proof}
Put  $\GR=\GR_{\!g}(\ell,p)$ and  $\gr=\gr_{\!g}(\ell, p)$.\\
(\ref{fun1}, \ref{fun2}): Let 
\[
[\A=(A,\lambda_H)], [\A'=(A, \lambda_{H'})]\in\Ver(\GR)
=\Ver(\gr)
\]
with $H, H'\in\SL_g(\cO)$
 positive-definite Hermitian matrices.
By Theorem \ref{g>1}, there exists $M\in\Mat_{g\times g}(\cO)$ with
$M^\dagger HM=\ell^{2n}H'$ for some positive integer $n$.  Hence
by Proposition \ref{degree}, the isogeny $\psi\in\End(A)$ given
by $M$ satisfies $\psi^\ast (\lambda_H)=\ell^{2n} \lambda_{H'}$.
But then by
Theorem \ref{elect} there exists a path of length $2n$ on both $\gr$
and $\GR$ connecting the vertex $[\A']$ to the vertex $[\A]$. \\
\eqref{fun3}: Since $\gr_{\!g}(\ell, p)$ is connected and not bipartite, its
bipartite double cover $\wgr_{\!g}(\ell, p)$ (see Theorem \ref{dealing}
\eqref{dealing1}) is connected.
\end{proof}

\section{The \texorpdfstring{$\ell$}{\unichar{"2113}}-adic uniformization of \texorpdfstring{$\gr_{\!g}(\ell,p)$}{gr\unichar{"005F}g(\unichar{"2113},p)} and
\texorpdfstring{$\wgr_{\!g}(\ell,p)$}{tilde gr\unichar{"005F}g(\unichar{"2113},p)}}
\label{uni}

Through out this section $X$ will be an arbitrary 
principally polarized, not necessarily supersingular, abelian variety.

It is well known that for $\ell\ne p$ the supersingular
elliptic curves over $\bFp$ are in bijective correspondence
with the double cosets
$$\cO_{\H_{p}}[1/\ell]^\times\bs
\GL_2(\Q_\ell)/\Q^\times_\ell\GL_2(\Z_\ell), $$
with
$\GL_2(\Q_\ell)/\Q^\times_\ell\GL_2(\Z_\ell)$ corresponding to the vertices
of the standard tree for $\GL_2(\Q_\ell)$.  We will  generalize
this form to higher dimension, starting with the definition below.

\begin{definition1}
Let $R$ be a commutative ring and $M$ an $R$-algebra with an
anti-involution $x\mapsto x^\dagger$.  We define the {\sf unitary
group} $\rU(M)=\{x\in M \mid x^\dagger x=1\}$.  We define the
{\sf general unitary group} $\GU_R(M)=\{x\in M \mid x^\dagger
x\in R^\times\}$.
\end{definition1}
\begin{remark1}
\label{motel}
  {\rm Let $\cB_{2g}$ be the Bruhat-Tits building for $\GSp_{2g}$ over
    $\Q_\ell$.  The {\sf special $1$-skeleton} $\cSS_{2g}$ of $\cB_{2g}$ 
has vertices the
    special vertices of $\cB_{2g}$ which are the vertices of type
$0$ or $g$ -- see, for example,
\cite[Sect.~2,3]{She}, 
 and edges the edges of the $1$-skeleton
    of $\cB_{2g}$ with both ends special vertices.}
\end{remark1}

Note that the next theorem is true for all principally polarized
abelian varieties whether superspecial or not. Specifically
say that for a principally polarized abelian variety the anti-involution
$x\mapsto x^\dagger$ on $\End(A)$ is Rosati. On $E^g$ we take the
Rosati (anti-)involution corresponding to the product polarization.
Hence on $\Mat_{g\times g}(\OO_E)$ we take $M\mapsto M^\dagger:=\overline{M}^t$,
with $m\mapsto \overline{m}$ the main involution of the definite quaternion
algebra $\End(E)\otimes\Q:=\End^0(E)$.

Theorems similar to Theorem \ref{building q} can be found  in the
theory of Shimura varieties -- see \cite{Ko}, for example.
\begin{theorem}
\label{building q}
Let $X$ be a principally polarized abelian variety of dimension $g$
over an algebraically closed field $k$ of characteristic
$\charr(k)$ with $\ell\neq\charr(k)$ a prime.
The principally polarized abelian varieties
isogenous to $X$ by $\ell$-power isogenies \textup{(}we require that the
principal polarization be the one induced by the isogeny\textup{)} are in
bijective correspondence with the double cosets
\begin{equation}
\label{egg}
\GU(\End(X)[1/\ell])\bs
\GSp_{2g}(\Q_\ell)/\allowbreak \Q^\times_\ell\GSp_{2g}(\Z_\ell) ,
\end{equation}
 with
$\GSp_{2g}(\Q_\ell)/\Q^\times_\ell\GSp_{2g}(\Z_\ell)$ the 
vertices $\Ver(\cSS_{2g})$ as in Remark \textup{\ref{motel}}.  Furthermore,
$(\ell)^g$-isogenies correspond to the edges $\Ed(\cSS_{2g})$.
Specifically, two elements of
\[
\GSp_{2g}(\Q_\ell)/\Q^\times_\ell\GSp_{2g}(\Z_\ell)
\]
are adjacent if
the corresponding homothety classes of unimodular symplectic lattices
have representatives with one having index $(\ell)^g$ in the other.
In particular, the principally polarized superspecial abelian
varieties of dimension $g$ are in bijective correspondence with
\begin{equation}
\label{yolk}
\GU(\MM(\cO_E[1/\ell]))\bs 
\GSp_{2g}(\Q_\ell)/\allowbreak \Q^\times_\ell\GSp_{2g}(\Z_\ell).
\end{equation}
\end{theorem}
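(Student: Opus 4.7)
The plan is to realize both sides of the claimed bijection as sets of $\Z_\ell$-lattices in the rational Tate module $V \colonequals T_\ell(A)\otimes_{\Z_\ell}\Q_\ell$, equipped with the nondegenerate symplectic form $\langle\,,\,\rangle$ coming from the Weil pairing of $\lambda_A$. Fixing a symplectic basis identifies $(V,\langle\,,\,\rangle)$ with the standard symplectic space $\Q_\ell^{2g}$, and the group of automorphisms of $V$ preserving $\langle\,,\,\rangle$ up to scalar is $\GSp_{2g}(\Q_\ell)$.

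First I will describe the right-hand side. A $\Z_\ell$-lattice $L\subset V$ is \emph{unimodular up to homothety} if its dual $L^\vee$ with respect to $\langle\,,\,\rangle$ equals $cL$ for some $c\in\Q_\ell^\times$; homothety classes of such lattices are precisely the vertices of $\cSS_{2g}$. Since $\GSp_{2g}(\Q_\ell)$ acts transitively on these classes with the stabilizer of $[T_\ell(A)]$ equal to $\Q_\ell^\times\GSp_{2g}(\Z_\ell)$, we obtain $\Ver(\cSS_{2g})=\GSp_{2g}(\Q_\ell)/\Q_\ell^\times\GSp_{2g}(\Z_\ell)$; two such classes are joined by an edge of $\cSS_{2g}$ exactly when they admit representatives $L'\subset L$ with $L/L'\cong(\Z/\ell)^g$, since neighboring special vertices are classified by maximal isotropic subspaces of the reduction.

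Next I interpret the left-hand side in the same language. An $\ell$-power isogeny $\phi\colon A\to B$ together with a principal polarization $\lambda_B$ satisfying $\phi^{\ast}\lambda_B=\ell^k\lambda_A$ determines, via $V_\ell(\phi)$, a $\Z_\ell$-lattice $L(\phi,B)\subset V$ commensurable with $T_\ell(A)$, and the polarization compatibility translates into the identity $L(\phi,B)^\vee=\ell^k L(\phi,B)$. Hence $[L(\phi,B)]$ is a special vertex. Conversely, any lattice $L\subset V$ commensurable with $T_\ell(A)$ and unimodular up to homothety reconstructs a principally polarized abelian variety $(B_L,\lambda_L)$ together with an $\ell$-power isogeny $A\to B_L$ inducing $\lambda_L$ from $\lambda_A$, by the standard Tate-module dictionary between lattices in $V$ and $\ell$-power quasi-isogenous abelian varieties over the algebraically closed base.

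To complete the bijection I must identify the equivalence that forgets $\phi$. Two triples $(B,\lambda_B,\phi)$, $(B',\lambda_{B'},\phi')$ yield isomorphic principally polarized abelian varieties iff the quasi-isogeny $A\to A$ obtained by composing $\phi$, an isomorphism $B\to B'$, and the inverse of $\phi'$ is an $\ell$-power quasi-isogeny preserving $\lambda_A$ up to a $\Z[1/\ell]^\times$-scalar—that is, an element of $\GU(\End(A)[1/\ell])$. This action on lattices in $V$ is the restriction of the natural left action of $\GU(\End(A)[1/\ell])\subset\GSp_{2g}(\Q_\ell)$ on the vertex set, yielding the double coset presentation (\ref{egg}). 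The edge statement follows at once: an $(\ell)^g$-isogeny $B\to B'$ inducing the polarization corresponds to an inclusion of Tate-module lattices of index $\ell^g$, matching the edge description of $\cSS_{2g}$ above. The superspecial specialization $A=E^g$ with its product polarization gives $\End(A)=\MM(\cO_E)$, recovering (\ref{yolk}). The principal obstacle is the bookkeeping in paragraph three: one must verify carefully that the "induced polarization" condition $\phi^{\ast}\lambda_B=\ell^k\lambda_A$ matches self-duality up to the \emph{scalar} $\ell^k$, so that the resulting homothety class lies in the special skeleton $\cSS_{2g}$ rather than in a non-special stratum of $\cB_{2g}$, and that the $\GU$-action really accounts for isomorphism of polarized pairs rather than of unpolarized abelian varieties.
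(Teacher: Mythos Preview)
Your argument is essentially the same as the paper's: both pass to the rational Tate module $V=T_\ell(A)\otimes\Q_\ell$, identify special vertices with homothety classes of self-dual lattices, use the Tate-module dictionary to match these with principally polarized $\ell$-power isogenous abelian varieties, and recognize the residual ambiguity as the left $\GU(\End(A)[1/\ell])$-action. The paper phrases the lattice as $\pi^{-1}(\ker\phi)$ via the exact sequence $0\to T\to V\to A[\ell^\infty]\to 0$ rather than via $V_\ell(\phi)$, but this is the same construction.

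There is one genuine omission in your last sentence. Specializing $A=E^g$ in \eqref{egg} only describes those principally polarized superspecial abelian varieties that are $\ell$-power isogenous to $(E^g,\lambda_0)$ \emph{with compatible polarizations}. To obtain \eqref{yolk} as stated---a bijection with \emph{all} of $\sS_{\!g}(p)_0$---you must know that every such variety is reachable from $E^g$ by a chain of $(\ell)^g$-isogenies. The paper invokes the connectedness result Theorem~\ref{fun} for exactly this step; without it the ``in particular'' clause is unjustified.
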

\begin{proof}
Let $T=\Ta_\ell(X)$ be the Tate module of $X$, and let
$V=\Ta_\ell(X)\otimes\Q_\ell$, both equipped with the symplectic Weil
pairing.  Identify $\GSp_{2g}(\Q_\ell)=\GSp(V)$ and
$\GSp_{2g}(\Z_\ell)=\GSp(T)$.  Note that we have an exact
sequence $$0\to T\to V\xrightarrow{\pi}X[\ell^\infty]\to0.$$

Let $\phi:X\to X^\prime$ be an $\ell$-power isogeny to an abelian
variety $X^\prime$, principally polarized by the induced polarization.
We will associate to the pair $(\phi,X^\prime)$ the homothety class of
the $\Z_\ell$-lattice $T^\prime=\pi^{-1}(\ker \phi)\subset V$.  Since
the induced polarization on $X^\prime$ is principal, the symplectic
pairing restricted to $T^\prime$ is a scalar multiple of a unimodular
integral pairing.  Conversely, if $[T^\prime]$ is a homothety class of
full-rank $\Z_\ell$-lattices in $V$ such that symplectic pairing restricted to
any representative
is a scalar multiple of a unimodular integral pairing, pick
a representative $T^\prime$ such that $T^\prime\supset T$. Then
$\pi(T^\prime)$ is the kernel of an $\ell$-power isogeny whose image
is principally polarized.  Furthermore, picking a different
representative corresponds to composing the $\phi$ with
multiplication by a scalar power of $\ell$.

Note that if $\psi:X^\prime\to X^{\prime\prime}$ is an
$(\ell)^g$-isogeny, then $T^{\prime\prime}=\pi^{-1}(\ker
\psi\circ\phi)$ is an extension of $T^\prime$ of index $(\ell)^g$.
Hence the corresponding vertices of the building are adjacent.
Conversely, since both graphs have the same degree all special edges
of the building come from $(\ell)^g$-isogenies.

Now let $\Delta$ be the set of all homothety classes of full-rank
$\Z_\ell$-lattices
in $V$ such that the restriction of the symplectic pairing is a scalar
multiple of a unimodular integral pairing.  It is easy to see that
$\Delta\cong\GSp(V)/\Q^\times_\ell\GSp(T)$ with $r\Q^\times_\ell\GSp(T)$
corresponding to the class $[rT]$.

It suffices to show that $[rT]$ and $[sT]$ correspond to
isomorphic principally polarized abelian varieties if and only if
$[rT]=[\psi sT]$ for some
$\psi\in\GU(\End(X)[1/\ell])$.
After possibly scaling $r$, $s$, and $\psi$ by powers of $\ell$ we may assume
that $\psi\in\End(X)$, $rT=\psi sT$, and $rT,\,sT\supset T$.
Therefore $\pi(rT)=\psi(\pi(sT))$.  Hence if $\ker \phi=\pi(rT)$ and
$\ker \phi^\prime=\pi(sT)$, then $\phi \circ \psi = \phi^\prime$ and
both have the same codomain.

Conversely, if $\phi$ and $\phi^\prime$ have the same codomain, let
$\psi=\widehat{\phi}\circ \phi^\prime$.  Note that
$\psi\in\GU(\End(X)[1/\ell])$
since it preserves the polarization.  Now $\phi \circ \psi =
\deg(\phi) \phi^\prime$.  Now let $\pi(rT)=\ker \phi$ and
$\pi(sT)=\ker(\deg(\phi) \phi^\prime)$.  Then $rT=\psi sT$, and we are
done with the main claim.

The final assertion with  \eqref{yolk} now follows from Theorem
\ref{fun}.
\end{proof}
We now apply Theorem \ref{building q} to derive the $\ell$-adic 
uniformization of the isogeny graphs $\gr_{\!g}(\ell, p)$ and
$\wgr_{\!g}(\ell, p)$.
\subsection{The case \texorpdfstring{\except{toc}{\boldmath{$g=1$}}\for{toc}{$g=1$}}{g=1}: \texorpdfstring{\except{toc}{\boldmath{$\ell$}}\for{toc}{$\ell$}}{\unichar{"2113}}-adically uniformizing 
\texorpdfstring{\except{toc}{\boldmath{$\gr_1(\ell,p)$}}\for{toc}{$\gr_1(\ell,p)$}}{gr\unichar{"005F}1(\unichar{"2113},p)} and \texorpdfstring{\except{toc}{\boldmath{$\wgr_1(\ell,p)$}}\for{toc}{$\wgr_1(\ell,p)$}}{tilde gr\unichar{"005F}1(\unichar{"2113},p)}}
\label{g=1q}
Let $\Delta=\Delta_\ell$ be the tree for $\SL_2(\Q_\ell)$. The
rational definite quaternion algebra $\HH_p$ with maximal
order $\cO=\cO_{\HH_p}$ is ramified at $p$
and split at $\ell$.  Set 
\begin{equation}
\label{gamma}
\Gamma_0=\cO[1/\ell]^\times\quad\text{and}\quad
\Gamma_1=\{\gamma\in\Gamma_0\mid \Norm_{\HH_p/\Q}(\gamma)=1\}.
\end{equation}
We have $\Gamma_0=\cO[1/\ell]^\times\hookrightarrow 
(\HH_p\otimes_{\Q}\Q_{\ell})^\times=\GL_2(\Q_\ell)$ and likewise
$\Gamma_1\hookrightarrow \GL_2(\Q_\ell)$. Let $\overline{\Gamma}_i$
be the image of $\Gamma_i$ in $\PGL_2(\Q_\ell)$ for $i=0,1$.
The groups $\overline{\Gamma}_0$, $\overline{\Gamma}_1$
are discrete cocompact subgroups of $\PGL_2(\Q_\ell)$.
The groups $\Gamma_i\subset \GL_2(\Q_\ell)$ act on $\Delta$ through
their image $\overline{\Gamma}_i\subseteq \PGL_2(\Q_\ell)$, $i = 0,1$.
Hence the quotients $\ggr_1\colonequals
\Gamma_1\backslash \Delta=
\overline{\Gamma}_1\backslash\Delta$
and $\ggr_0\colonequals \Gamma_0\backslash \Delta=
\overline{\Gamma}_0\backslash \Delta$ are finite graphs
with weights. Kurihara \cite[p.~294]{Kur} shows that the weighted
adjacency matrix $\Adw(\ggr_0)$ is the Brandt matrix $B_1(\ell)$ for
$\cO\subseteq\HH_p$; we know $\Adw(\gr_1(\ell, p))= B_1(\ell)$
by Theorem \ref{smad}. In fact, to  show $\Adw(\ggr_0)=B_1(\ell)$
Kurihara basically shows $\ggr_0=\br_1(\ell, p)$. In \cite[p.~296]{Kur}
it is shown that $\ggr_1$ (note that our $\overline{\Gamma}_1$ is $\Gamma_+$ in
\cite{Kur}) is the bipartite double cover of $\ggr_0$.  Hence we
have
\begin{theorem}
\label{oneee}
{\rm (Kurihara) } 
\begin{enumerate}[\upshape (a)]
\item
\label{oneee1}
$\br_1(\ell, p)
=\Gamma_0\backslash \Delta_\ell=
\overline{\Gamma}_0\backslash \Delta_\ell$ as graphs with weights.
\item
\label{oneee2}
$\Gamma_1\backslash \Delta_\ell=\overline{\Gamma}_1\backslash \Delta_\ell$ is the bipartite double cover of 
$\Gamma_0\backslash \Delta_\ell =\overline{\Gamma}_0\backslash \Delta_\ell$.
\end{enumerate}
\end{theorem}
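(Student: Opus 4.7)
The plan is to deduce both parts from Theorem \ref{building q} applied with $g=1$, combined with standard bipartition structure of the Bruhat–Tits tree; the statement is also originally due to Kurihara \cite{Kur} and an alternative is simply to cite his argument.

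For part (a), I would first note that in the $g=1$ case the Bruhat–Tits building for $\GSp_2 = \GL_2$ is the tree $\Delta_\ell$ itself, every vertex is special, and the special $1$-skeleton $\cSS_2$ coincides with $\Delta_\ell$. The quaternionic unitary group simplifies as well: for $g=1$ the condition $x^\dagger x \in \Q^\times$ is automatic because $\N(x) \in \Q$, so $\GU(\cO[1/\ell]) = \cO[1/\ell]^\times = \Gamma_0$. Theorem \ref{building q} therefore identifies
\[
\GR_1(\ell,p)\text{'s vertex set } \sS_1(p)_0 \;\longleftrightarrow\; \Gamma_0 \backslash \GL_2(\Q_\ell)/\Q_\ell^\times \GL_2(\Z_\ell) = \Ver(\Gamma_0\backslash\Delta_\ell),
\]
where I use Theorem \ref{g1}\eqref{gl1} so that every supersingular elliptic curve is $\ell$-power isogenous to $E$ and hence accounted for. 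The same theorem identifies $(\ell)^1$-isogenies with edges of $\Delta_\ell$, so as a graph with opposites we get $\gr_1(\ell,p) = \Gamma_0 \backslash \Delta_\ell$. It remains to match the weights: the $\overline{\Gamma}_0$-stabilizer of a vertex corresponding to $\A_i = E_i$ is $\cO_i^\times/\{\pm 1\}$, reproducing Kurihara's weight convention, and analogously for the stabilizers of edges. Combining with the identification $\gr_1(\ell,p) = \br_1(\ell,p)$ from Theorem \ref{smad}\eqref{smad1} yields part (a).

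For part (b), I would argue directly that $\Gamma_1 \backslash \Delta_\ell$ is bipartite and is a two-fold cover of $\Gamma_0 \backslash \Delta_\ell$. The tree $\Delta_\ell$ is bipartite: after fixing a base vertex, the two color classes are distinguished by the parity of $v_\ell(\det M)$ for any $M \in \GL_2(\Q_\ell)$ taking the base lattice to a representative. Since $\Gamma_1$ consists of norm-$1$ quaternions, its image in $\GL_2(\Q_\ell)$ has determinant of $\ell$-valuation zero, so $\Gamma_1$ preserves the bipartition and $\Gamma_1\backslash\Delta_\ell$ inherits it. For the index, definiteness of $\H_p$ forces $\N(\Gamma_0) \subseteq \Q_{>0} \cap \Z[1/\ell]^\times = \ell^\Z$, and in fact this containment is an equality (e.g.\ the image of $\ell \in \Z[1/\ell]^\times \subseteq \Gamma_0$). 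The central subgroup $\Z[1/\ell]^\times = \pm\ell^\Z$ has norm image $\ell^{2\Z}$, so
\[
[\overline{\Gamma}_0 : \overline{\Gamma}_1] = [\N(\Gamma_0):\N(\Z[1/\ell]^\times)] = [\ell^\Z:\ell^{2\Z}] = 2.
\]
Hence $\Gamma_1\backslash\Delta_\ell \to \Gamma_0\backslash\Delta_\ell$ is a two-to-one cover whose source is bipartite, which characterizes the bipartite double cover.

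The main obstacle is the weight bookkeeping in part (a): one must check that the natural stabilizer weights on $\Gamma_0\backslash\Delta_\ell$ (orders of stabilizers in $\overline{\Gamma}_0$ modulo the image of $\Z[1/\ell]^\times$) exactly reproduce the weights $e_i(1) = \#\cO_i^\times$ at vertices and $e(U)$ at edges from Definition \ref{Brandt}, with no spurious factors of $2$ from $\pm 1$. This is a routine but fiddly verification, and it is precisely the content of Kurihara's calculation at \cite[p.~294]{Kur}; one can therefore either grind through stabilizer comparisons or simply cite \cite{Kur}.
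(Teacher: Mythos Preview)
Your approach is sound but differs from the paper's: the paper gives no proof at all for Theorem \ref{oneee}, simply attributing both parts to Kurihara with page citations to \cite{Kur}. What you have done instead is rederive the statement from the paper's own machinery---specifically, your argument for part (a) is exactly the $g=1$ specialization of the paper's proof of Theorem \ref{Un}, which the authors only invoke for general $g$. So your route is self-contained within the paper, at the cost of relying on Theorem \ref{building q}; the paper's route is a bare citation. Either is acceptable, and you correctly note the citation alternative.

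There is one genuine slip in part (b). To get $[\overline{\Gamma}_0:\overline{\Gamma}_1]=2$ you need $\N(\Gamma_0)=\ell^{\Z}$, i.e.\ an element of $\cO[1/\ell]^\times$ of norm exactly $\ell$. Your parenthetical justification ``the image of $\ell\in\Z[1/\ell]^\times$'' does not work, since $\N(\ell)=\ell^2$ only produces $\ell^{2\Z}\subseteq\N(\Gamma_0)$. If $\N(\Gamma_0)$ were actually $\ell^{2\Z}$, then $\overline{\Gamma}_0=\overline{\Gamma}_1$ and the cover would be trivial. The correct reference is Lemma \ref{l1} with $x=\ell$: it supplies $\alpha\in\cO[1/\ell]$ with $\N(\alpha)=\ell$, hence $\alpha\in\cO[1/\ell]^\times$ and $\ell\in\N(\Gamma_0)$. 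This same $\alpha$ also swaps the two colors of $\Delta_\ell$, which you need to conclude that the double cover is the \emph{bipartite} double cover rather than, say, two disjoint copies; ``two-to-one with bipartite source'' alone does not pin this down. Once you patch the norm-$\ell$ point, both issues are resolved simultaneously.
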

\begin{theorem}
\label{twooo}
\begin{enumerate}[\upshape (a)]
\item
\label{twooo1}
$\gr_1(\ell,p)=\Gamma_0\backslash \Delta_\ell=
\overline{\Gamma}_0\backslash \Delta_\ell$ as graphs with weights.
\item
\label{twooo2}
$\wgr_1(\ell,p)=\Gamma_1\backslash\Delta_\ell=
\overline{\Gamma}_1\backslash\Delta_\ell$ as graphs with weights.
\end{enumerate}
\end{theorem}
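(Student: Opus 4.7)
The plan is to deduce both parts as a direct composition of previously proved identifications, the only substantive point being uniqueness of the bipartite double cover.

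For part (a), I would chain two isomorphisms. Theorem \ref{smad}(a), specialized to $g=1$, gives $\gr_1(\ell,p)=\br_1(\ell,\cO)$ as weighted graphs, where $\cO=\cO_E\subseteq\HH_p$. Kurihara's Theorem \ref{oneee}(a) then identifies $\br_1(\ell,\cO)$ with $\Gamma_0\backslash\Delta_\ell$ as weighted graphs. Composing the two yields (a); in particular the weight on a vertex $[\A]$ records $\#\Aut(\A)$, which matches the $\Gamma_0$-stabilizer of the corresponding vertex of $\Delta_\ell$ via Kurihara's identification.

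For part (b), the plan is to recognize both $\wgr_1(\ell,p)$ and $\Gamma_1\backslash\Delta_\ell$ as the weighted bipartite double cover of a common base. By Theorem \ref{dealing}(a) the enhanced graph $\wgr_1(\ell,p)$ is, by construction, the weighted bipartite double cover of $\gr_1(\ell,p)$; by Theorem \ref{oneee}(b) the quotient $\Gamma_1\backslash\Delta_\ell$ is the bipartite double cover of $\Gamma_0\backslash\Delta_\ell$. Part (a) identifies these two base graphs as weighted graphs, so the two double covers are built over isomorphic bases.

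The only remaining step is uniqueness of the bipartite double cover, which is where whatever care is needed must be exerted. Since $\gr_1(\ell,p)$ is connected and non-bipartite by Theorem \ref{g1}(a),(b), its bipartite double cover is itself connected and is determined up to canonical isomorphism by the base; moreover the double-cover construction lifts weights in the obvious way (each vertex lifts to two vertices of the same weight, each edge to a single edge of the same weight). Hence the two weighted structures agree, proving (b). I do not foresee any genuine obstacle beyond this bookkeeping: all of the mathematical content has been absorbed into Theorems \ref{smad}, \ref{dealing}, \ref{g1}, and \ref{oneee}, so the present theorem is effectively a corollary.
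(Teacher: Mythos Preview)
Your proposal is correct and follows exactly the same route as the paper: part (a) is the composition of Theorem \ref{smad}\eqref{smad1} with Theorem \ref{oneee}\eqref{oneee1}, and part (b) is Theorem \ref{dealing}\eqref{dealing1} together with Theorem \ref{oneee}\eqref{oneee2}. Your appeal to Theorem \ref{g1} for uniqueness of the bipartite double cover is harmless but unnecessary, since the bipartite double cover is a canonical (functorial) construction, so an isomorphism of weighted base graphs lifts automatically.
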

\begin{proof}
\eqref{twooo1}: Combine Theorem \ref{smad}\eqref{smad1}
with Theorem \ref{oneee}\eqref{oneee1}.\\
\eqref{twooo2}: Combine Theorem \ref{oneee}\eqref{oneee2}
with Theorem \ref{dealing}\eqref{dealing1}.
\end{proof}
\begin{remark1}
\label{fish}
{\rm  
\begin{enumerate}[\upshape (a)]
\item
\label{fish1}
The big isogeny graph $\GR_1(\ell, p)$ is {\em not}
$\ell$-adically uniformized by $\Delta_\ell$ since $\GR_1(\ell,p)$ is not
a graph with opposites.
\item
\label{fish2}
Theorem \ref{twooo}\eqref{twooo1} obviously implies that the isogeny
graph $\gr_1(\ell, p)$ is connected.  Note that in fact Kurihara
\cite[p.~291]{Kur} invokes strong approximation in the course of 
proving $\br_1(\ell,p)=\overline{\Gamma}_0\backslash \Delta_\ell$.
\end{enumerate}
}
\end{remark1}
\subsection{The isogeny graphs \texorpdfstring{\except{toc}{\boldmath{$\gr_1(\ell,p)$}}\for{toc}{$\gr_1(\ell,p)$}}{gr\unichar{"005F}1(\unichar{"2113},p)}, 
\texorpdfstring{\except{toc}{\boldmath{$\wgr_1(\ell,p)$}}\for{toc}{$\wgr_1(\ell,p)$}}{tilde gr\unichar{"005F}1(\unichar{"2113},p)} and
Shimura curves}
\label{eel}
Theorem \ref{twooo} in turn will show that our isogeny graphs $\gr_{\!1}(\ell,p)$
and $\wgr_{\!1}(\ell,p)$ arise from the bad reduction of Shimura curves,
which we now explain. Let 
$B$ be the indefinite rational quaternion division algebra with
$\Disc B=\ell p$. Let 
$V_B/\Q$ be the Shimura curve parametrizing principally polarized abelian
surfaces with QM (quaternionic multiplication)
 by a maximal order $\mathcal{M}\subseteq B$.  There
is then a model $M_B/\Z$ of $V_B/\Q$ constructed as a coarse  moduli
scheme by Drinfeld \cite{drin}; see also \cite{jl1}. Let 
$\msL/\Z_\ell$ be the $\ell$-adic upper half-plane.
The dual graph $G(\msL/\Z_\ell)$
of its
special fiber is canonically $\Delta=\Delta_\ell$.  For
$\overline{\Gamma}\subseteq
\PGL_2(\Q_\ell)$ a discrete, cocompact subgroup,
the quotient $\overline{\Gamma}\backslash\msL$
is the formal completion of a scheme $\msL_{\overline{\Gamma}}/\Z_\ell$
along its closed fiber. The dual graph of its special fiber
$G(\msL_{\overline{\Gamma}}/\Z_\ell)
\simeq (\overline{\Gamma}\backslash\Delta)^\ast$ as graphs with lengths in the notation
of Definition \ref{weight1}\eqref{wad1}, see \cite[Prop.~3.2]{Kur}.

For the formulation below, see \cite[Theorems 4.3$^\prime$, 4.4]{jl1}.
\begin{theorem}
\textup{(\v{C}erednik, Drinfeld) }
\label{font}
Let $w_\ell$ be the Atkin-Lehner involution at $\ell$ of $M_B$.
Let $\overline{\Gamma}_0$ be the image of $\Gamma_0\subseteq \GL_2(\Q_\ell)$
in $\PGL_2(\Q_\ell)$ and similarly for $\overline{\Gamma}_1$.
Let $\mathfrak{O}$ be the ring of integers in the unramified quadratic
extension of $\Q_\ell$.
\begin{enumerate}[\upshape (a)]
\item
The scheme $M_B\times\Z_{\ell}$ is the twist of $\msL_{\overline{\Gamma}_1}/\Z_\ell$
given by the $1$-cocycle 
\begin{align*}
\chi\in H^1(\Gal(\mathfrak{O}/\Z_\ell)&,
\Aut(\msL_{\overline{\Gamma}_1}\times_{\Z_\ell}\mathfrak{O}/\mathfrak{O})),
\text{ where } \chi:\Frobb_\ell\mapsto w_\ell:\\
&\quad M_B\times \Z_\ell=(\msL_{\overline{\Gamma}_1})^\chi .
\end{align*}
\item
$(M_B/w_\ell)\times\Z_{\ell}=\msL_{\overline{\Gamma}_0}/\Z_\ell$.
\end{enumerate}
\end{theorem}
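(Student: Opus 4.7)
The plan is to deduce both parts from the Čerednik--Drinfeld $\ell$-adic uniformization theorem, which is a theorem about moduli of abelian surfaces with QM and has essentially already been proved in the references \cite{drin} and \cite{jl1}. The argument factors through the classical ``swap'' of quaternion algebras: the indefinite algebra $B$ of discriminant $\ell p$ is matched, at every finite prime $q\ne \ell$ and at $\infty$, with the definite algebra $\HH(p)$ of discriminant $p$, the two differing only at $\ell$ and $\infty$. This swap will convert an automorphic description on $B^\times_\Aa$ into a finite double-coset description built from $\cO=\cO_{\HH(p)}$, which is exactly what our $\Gamma_0$ and $\Gamma_1$ are.

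First, I would invoke Drinfeld's theorem that his formal scheme $\widehat{\msL}/\Z_\ell$ is the moduli of \emph{special formal $\mathfrak{O}_{B_\ell}$-modules} of height $4$, where $\mathfrak{O}_{B_\ell}$ is the maximal order in the ramified quaternion algebra $B\otimes_\Q\Q_\ell$. The $\Z_\ell$-points of this moduli have two connected components indexed by the height-parity of the universal formal module, and inside each component the special fibre is a tree of $\mathbf{P}^1$'s whose dual graph is the Bruhat--Tits tree $\Delta_\ell$. Second, I would identify the $\ell$-adic moduli problem of $M_B$ with Drinfeld's: at $\ell$ the abelian surface with QM by $\cM$ has a formal group which, because $\ell\mid\Disc B$, is a special formal $\cM\otimes\Z_\ell$-module of height $4$; this is exactly the data Drinfeld parametrizes. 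Third, I would rigidify by choosing a prime-to-$\ell$ level structure, using Theorem \ref{building q} (or rather its $g=1$ analogue) to see that the set of prime-to-$\ell$ rigidifications is a principal homogeneous space for $\HH(p)^\times\backslash \widehat{\HH(p)}^\times/\cO_{\widehat\HH(p)}^\times$.

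Assembling these ingredients, the formal completion of $M_B\times\mathfrak{O}$ along its closed fibre is $\overline{\Gamma}_1\backslash(\widehat{\msL}\times_{\Z_\ell}\mathfrak{O})$, because the norm-$1$ global units in $\cO[1/\ell]$ are exactly the automorphisms of the universal object compatible with a single choice of prime-to-$\ell$ trivialization. Algebraizing (the formal scheme is proper, so Grothendieck existence applies) gives the analogue of (a) over $\mathfrak{O}$. The descent to $\Z_\ell$ is then the main subtle point: the natural moduli problem is defined over $\Z_\ell$, but Drinfeld's formal scheme is naturally defined only over $\mathfrak{O}$; Frobenius acts by permuting the two parity components, which on the $M_B$ side corresponds precisely to the Atkin--Lehner involution $w_\ell$ (because $w_\ell$ is implemented globally by an element of $\cM$ whose reduced norm generates the maximal ideal of $\cO_{B_\ell}$, shifting height-parity by one). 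This yields the twisting $\chi:\Frobb_\ell\mapsto w_\ell$ asserted in (a).

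For (b), note that $\Gamma_0/(\Gamma_1\cdot\Z[1/\ell]^\times)$ is generated by the class of any element $\pi\in\cO$ of reduced norm $\ell$; on the Shimura side this class acts as $w_\ell$ by the same bookkeeping used in the descent. Therefore quotienting $\Gamma_1\backslash\Delta_\ell$ by $w_\ell$ gives $\Gamma_0\backslash\Delta_\ell$, and the same computation on formal schemes upgrades this to $(M_B/w_\ell)\times\Z_\ell=\msL_{\overline{\Gamma}_0}/\Z_\ell$. The hard step is unquestionably the descent in the previous paragraph --- matching the Galois action on Drinfeld's two components with the global action of $w_\ell$ on $M_B$ --- and in practice one follows \cite{drin} and \cite[\S 4]{jl1} verbatim for this point rather than re-proving it from scratch.
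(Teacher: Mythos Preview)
The paper does not prove this theorem at all: it is stated with the attribution ``(\v{C}erednik, Drinfeld)'' and the preceding sentence says ``For the formulation below, see \cite[Theorems 4.3$^\prime$, 4.4]{jl1}.'' So there is no proof in the paper to compare against; Theorem~\ref{font} is quoted as an input from the literature and then used to derive Corollary~\ref{font1}.

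Your proposal is a reasonable high-level sketch of the Drinfeld approach (moduli of special formal $\mathfrak{O}_{B_\ell}$-modules, the quaternion swap identifying the prime-to-$\ell$ structure with $\HH(p)$, and the descent from $\mathfrak{O}$ to $\Z_\ell$ via the Frobenius/$w_\ell$ matching), and you correctly flag the descent step as the genuinely hard part that one takes from \cite{drin} and \cite{jl1}. That said, since the paper treats this as a black-box citation, a ``proof'' here should simply be a pointer to those references rather than a reconstruction; your outline goes well beyond what the paper itself supplies, but is in the spirit of the cited sources.
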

The curve $M_B\times\Z_{\ell}/\Z_\ell$ is an {\em{admissible curve}}
in the sense of \cite[Defn.~3.1]{jl1}.  As such, the dual
graph of its special fiber $\GG(M_B\times\Z_{\ell}/\Z_\ell)$ is a graph
with lengths as in Definition \ref{weight1}\eqref{weight12} by 
\cite[Defn.~3.2]{jl1}.
\begin{corollary}
\label{font1}
\begin{enumerate}[\upshape (a)]
\item
\label{font11}
$G(M_B\times \Z_\ell/\Z_\ell)=\overline{\Gamma}_1\backslash \Delta=
\wgr_{\!1}(\ell,p)$
as graphs with lengths.
\item
$G((M_B/w_\ell)\times\Z_\ell/\Z_{\ell})=(\overline{\Gamma}_{0}\backslash\Delta)^\ast=
\gr_{\!1}(\ell,p)^\ast$ as graphs with lengths
with $(\overline{\Gamma}_{0}\backslash\Delta)^\ast$,
$\gr_{\!1}(\ell,p)^\ast$ as in Definition \textup{\ref{weight1}\eqref{wad1}}.
\end{enumerate}
\end{corollary}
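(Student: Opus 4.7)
The plan is to chain together the \v{C}erednik--Drinfeld uniformization (Theorem~\ref{font}), Kurihara's description of the dual graph of a Mumford quotient \cite[Prop.~3.2]{Kur}, and the identification of our isogeny graphs with the arithmetic quotients $\overline{\Gamma}_i\backslash\Delta_\ell$ (Theorem~\ref{twooo}). Recall from the text preceding the statement that for any discrete cocompact $\Gamma\subseteq\PGL_2(\Q_\ell)$, the dual graph of the special fiber of $\msL_\Gamma/\Z_\ell$ is $(\Gamma\backslash\Delta_\ell)^{\ast}$ as graphs with lengths, where the star removes half-edges. Both halves of the corollary are then formal once one knows that Kurihara's identification persists through the Galois twist appearing in Theorem~\ref{font}(a).

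For part~(b), Theorem~\ref{font}(b) identifies $(M_B/w_\ell)\times\Z_\ell$ with $\msL_{\overline{\Gamma}_0}/\Z_\ell$ directly over $\Z_\ell$, so \cite[Prop.~3.2]{Kur} yields
\[
\GG\bigl((M_B/w_\ell)\times\Z_\ell/\Z_\ell\bigr) \;=\; (\overline{\Gamma}_0\backslash\Delta_\ell)^{\ast}
\]
as graphs with lengths. Theorem~\ref{twooo}(a) converts the right-hand side into $\gr_1(\ell,p)^{\ast}$, finishing (b). For part~(a), Theorem~\ref{font}(a) describes $M_B\times\Z_\ell$ as a quadratic twist of $\msL_{\overline{\Gamma}_1}/\Z_\ell$ by $\chi:\Frobb_\ell\mapsto w_\ell$. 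Since this twist is unramified and $w_\ell$ acts on $\msL_{\overline{\Gamma}_1}\times\mathfrak{O}$ by an automorphism of the formal scheme, its special fiber over $\Z_\ell$ is the Galois descent of that of $\msL_{\overline{\Gamma}_1}\times\mathfrak{O}$; hence the underlying abstract graph with lengths computing the vanishing cycles is unchanged. Combining with \cite[Prop.~3.2]{Kur} gives $\GG(M_B\times\Z_\ell/\Z_\ell)=(\overline{\Gamma}_1\backslash\Delta_\ell)^{\ast}$. Now $\overline{\Gamma}_1\backslash\Delta_\ell=\wgr_1(\ell,p)$ by Theorem~\ref{twooo}(b), and since $\wgr_1(\ell,p)$ is a bipartite double cover it has no half-edges (the text constructing $\wgr$ explicitly notes $\overline{e}\neq e$ always), so the star operation is vacuous. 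This yields (a).

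The main obstacle is verifying that the unramified quadratic twist by $w_\ell$ does not alter the combinatorial dual graph with lengths. This is essentially built into the admissible curve formalism of \cite[Defns.~3.1, 3.2]{jl1}: the dual graph with lengths is an étale-local invariant of the semistable model, and $w_\ell$ swaps the two geometric components of the special fiber while preserving the edge structure, so Frobenius descent identifies the two dual graphs. Once this is granted, the rest of the argument is a direct comparison.
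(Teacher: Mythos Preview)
Your approach is essentially the same as the paper's: the paper's proof is a one-line deferral to Theorem~\ref{font} together with \cite[Prop.~4.2]{jl1} (which the paper notes is extracted from \cite[\S3]{Kur}), and you have unpacked exactly these ingredients, citing Kurihara directly and invoking Theorem~\ref{twooo}. Your handling of the unramified twist in part~(a)---that the dual graph with lengths is determined by the geometric special fiber and hence insensitive to the $\chi$-twist---is the right idea and is what \cite[Prop.~4.2]{jl1} packages; your phrasing ``$w_\ell$ swaps the two geometric components'' is imprecise (there are generally many components), but the substantive point that the twist is by an automorphism and so does not alter the underlying graph with lengths is correct.
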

\begin{proof}
This follows from Theorem \ref{font} by \cite[Prop.~4.2]{jl1},
which in turn is extracted from \cite[\S 3]{Kur}.

\end{proof}

\subsection{The general case \texorpdfstring{\except{toc}{\boldmath{$g\geq1$}}\for{toc}{$g\geq1$}}{g\unichar{"2265}1}: 
\texorpdfstring{\except{toc}{\boldmath{$\ell$}}\for{toc}{$\ell$}}{\unichar{"2113}}-adically uniformizing
\texorpdfstring{\except{toc}{\boldmath{$\gr_{\!g}(\ell,p)$}}\for{toc}{$\gr_{\!g}(\ell,p)$}}{gr\unichar{"005F}g(\unichar{"2113},p)} and \texorpdfstring{\except{toc}{\boldmath{$\wgr_{\!g}(\ell,p)$}}\for{toc}{$\wgr_{\!g}(\ell,p)$}}{tilde gr\unichar{"005F}g(\unichar{"2113},p)}}
\label{twoo}
Recall $A=E^g$, $\cO=\End(E)\subseteq \HH_p$, and 
$\End(A)=\Mat_{g\times g}(\cO)$.
Let $\cB_{2g}$ be the Bruhat-Tits building for $\Sp_{2g}(\Q_\ell)$
and $\cSS_{2g}$ its special $1$-skeleton as in Remark \ref{motel}.
Note that \mbox{$\GU_g(\HH_p\otimes_{\Q}\Q_\ell)$} and $\rU_g(\HH_p\otimes_{\Q}
\Q_\ell)$ as in \eqref{wet} act on $\cSS_{2g}$ with finite quotient.
\begin{theorem}
\label{Un}
\begin{enumerate}[\upshape (a)]
\item
\label{Un1}
$\br_{\!g}(\ell, \cO)=\GU_g(\cO[1/\ell])\backslash \cSS_{2g}$ as graphs
with weights.
\item
\label{Un2}
$\rU_g(\cO[1/\ell])\backslash \cSS_{2g}$  is the bipartite double cover
of $\GU_g(\cO[1/\ell])\backslash \cSS_{2g}$.
\end{enumerate}
\end{theorem}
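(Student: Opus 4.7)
The plan is to derive both statements from the $\ell$-adic uniformization of Theorem~\ref{building q}, applied to $A = E^g$, for which $\End(A) = \MM(\cO)$ and $\GU(\End(A)[1/\ell]) = \GU_g(\cO[1/\ell])$. The final assertion of Theorem~\ref{building q} already yields a bijection between $\GU_g(\cO[1/\ell])\bs\Ver(\cSS_{2g})$ and $\sS_g(p)_0 = \Ver(\br_{\!g}(\ell,\cO))$; that theorem also identifies edges of $\cSS_{2g}$ with $(\ell)^g$-isogenies, and in the quotient two such edges become identified exactly when they differ by an element of $\GU_g(\cO[1/\ell])$---matching the equivalence relation \eqref{granted} defining $\Ed(\br_{\!g}(\ell,\cO))$. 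For the weights, a stabilizer computation shows that the stabilizer in $\GU_g(\cO[1/\ell])$ of a vertex $v$ corresponding to $[\A]$, taken modulo the central scalars $\Z[1/\ell]^\times \cdot \Id$, is exactly $\Aut(\A)$; by Definition~\ref{weight} together with Theorem~\ref{smad}\eqref{smad1} this is the vertex weight on the $\br_{\!g}$ side. An analogous computation handles edge weights. This establishes (a).

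For (b), repeating the construction with $\U_g(\cO[1/\ell]) \subset \GU_g(\cO[1/\ell])$ gives a natural surjective quotient map
\[
\pi \colon \U_g(\cO[1/\ell])\bs\cSS_{2g} \longrightarrow \GU_g(\cO[1/\ell])\bs\cSS_{2g}.
\]
The special $1$-skeleton $\cSS_{2g}$ is canonically bipartite: if vertices correspond to homothety classes of unimodular symplectic $\Z_\ell$-lattices $T \subset V \colonequals \Q_\ell^{2g}$, the parity of the $\ell$-adic valuation of the scaling factor relating $T$ to a fixed reference unimodular lattice gives a well-defined two-coloring of $\Ver(\cSS_{2g})$ that is reversed along every edge. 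An element $M \in \GU_g(\Q_\ell)$ with $M^\dagger M = \mu\,\Id$ preserves the coloring iff $v_\ell(\mu) \in 2\Z$; since every element of $\U_g(\cO[1/\ell])$ has $\mu = 1$, the bipartite structure descends to $\U_g(\cO[1/\ell])\bs\cSS_{2g}$ and is compatible with $\pi$.

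The main obstacle will be to show that $\pi$ has degree exactly $2$. For any vertex $v$, the stabilizer in $\GSp_{2g}(\Q_\ell)$ of $v$ (which contains the corresponding stabilizer inside $\GU_g(\cO[1/\ell])$) consists of elements whose similitude has even $\ell$-adic valuation, so $\U_g(\cO[1/\ell])$ together with the $\GU_g(\cO[1/\ell])$-stabilizer of $v$ generates a subgroup of $\GU_g(\cO[1/\ell])$ whose image under the similitude character lies in the index-$2$ subgroup $\{x \in \Z[1/\ell]_{>0} : v_\ell(x) \in 2\Z\} \subset \Z[1/\ell]_{>0}$. To witness that this index $2$ is attained, choose $\alpha \in \cO$ of reduced norm $\ell$---such $\alpha$ exists because $\ell$ is unramified in $\HH_p$ and hence represented by the rank-$4$ reduced norm form on $\cO$---and note that $\alpha\,\Id \in \GU_g(\cO[1/\ell])$ has similitude $\ell$, so lies outside the index-$2$ subgroup above for every $v$. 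Hence $\pi$ is a degree-$2$ covering that preserves the bipartite structure, and combined with the connectedness and non-bipartiteness of $\gr_{\!g}(\ell,p) = \br_{\!g}(\ell,\cO)$ from Theorem~\ref{fun}, $\pi$ is precisely the (connected) bipartite double cover, proving (b).
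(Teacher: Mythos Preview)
Your proposal is correct and follows essentially the same approach as the paper. For (a) both proofs invoke Theorem~\ref{building q}; you supply the stabilizer/weight details that the paper leaves implicit. For (b) the paper simply asserts that $\PU_g(\cO[1/\ell])$ is the index-$2$ subgroup of $\PGU_g(\cO[1/\ell])$ preserving mod~$2$ distance on $\cSS_{2g}$ and appeals to Theorem~\ref{dealing}\eqref{dealing1} (that $\wgr_g$ is the bipartite double cover of $\gr_g$), whereas you give the building-theoretic argument directly: the parity coloring via $v_\ell$ of the similitude, the witness $\alpha\cdot\Id$ with $\Norm(\alpha)=\ell$ realizing the odd coset, and the appeal to Theorem~\ref{fun} for connectedness and non-bipartiteness of the base. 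Your route is slightly more self-contained, since it does not pass through the isogeny-graph identification, but the key mechanism---that the similitude character on $\GU_g(\cO[1/\ell])$ surjects onto $\ell^{\Z}$ and vertex stabilizers land in $\ell^{2\Z}$---is the same in both.
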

\begin{proof}
\eqref{Un1} follows immediately from Theorem \ref{building q} since $\GU_g(\cO[1/\ell])\backslash \cSS_{2g}$ is the same as \eqref{yolk}.

\eqref{Un2} follows from (1) since $\wgr_g$ is the bipartite double cover of $\gr_g$ and $\PU_g(\cO[1/\ell])$ is the subgroup of $\PGU_g(\cO[1/\ell])$ that preserves mod $2$ distance.
\end{proof}
\begin{theorem}
\label{Dos}
\begin{enumerate}[\upshape (a)]
\item
\label{Dos1}
$\gr_{\!g}(\ell, p)=\GU_g(\cO[1/\ell])\backslash \cSS_{2g}$ 
as graphs with weights.
\item
\label{Dos2}
$\wgr_{\!g}(\ell, p)=\rU_g(\cO[1/\ell])\backslash \cSS_{2g}$ 
as graphs with weights.
\end{enumerate}
\end{theorem}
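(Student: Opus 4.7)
The plan is that Theorem \ref{Dos} should follow by chaining together results that have already been established, exactly paralleling the argument used to deduce Theorem \ref{twooo} from Theorem \ref{oneee} and Theorem \ref{smad}. No new technical content is required; the work has been done in Theorems \ref{smad}, \ref{Un}, and \ref{dealing}.

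For part \eqref{Dos1}, I would simply combine the identification of isogeny graphs with Brandt graphs in Theorem \ref{smad}\eqref{smad1}, namely $\gr_{\!g}(\ell,p) = \br_{\!g}(\ell,\cO)$, with the $\ell$-adic uniformization of the little Brandt graph in Theorem \ref{Un}\eqref{Un1}, namely $\br_{\!g}(\ell,\cO) = \GU_g(\cO[1/\ell])\backslash \cSS_{2g}$. Both identifications are as graphs with weights, so composing them yields
\[
\gr_{\!g}(\ell,p) \;=\; \br_{\!g}(\ell,\cO) \;=\; \GU_g(\cO[1/\ell])\backslash \cSS_{2g}
\]
as graphs with weights.

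For part \eqref{Dos2}, I would use Theorem \ref{dealing}\eqref{dealing1}, which says that $\wgr_{\!g}(\ell,p)$ is the bipartite double cover of $\gr_{\!g}(\ell,p)$ (with inherited weights), together with Theorem \ref{Un}\eqref{Un2}, which says that $\U_g(\cO[1/\ell])\backslash \cSS_{2g}$ is the bipartite double cover of $\GU_g(\cO[1/\ell])\backslash \cSS_{2g}$. Since part \eqref{Dos1} identifies $\gr_{\!g}(\ell,p)$ with $\GU_g(\cO[1/\ell])\backslash\cSS_{2g}$ as weighted graphs, and taking the bipartite double cover is a well-defined operation on the isomorphism class of a weighted graph with opposites, the two double covers must agree as weighted graphs:
\[
\wgr_{\!g}(\ell,p) \;=\; \U_g(\cO[1/\ell])\backslash \cSS_{2g}.
\]

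There is no real obstacle here; the proof is a three-line formal composition. The only point that merits a sentence of care is that the bipartite-double-cover construction respects the weighted graph structure, so that the isomorphism from part \eqref{Dos1} passes to covers. This is immediate from Definitions \ref{weight1} since weights on edges and vertices are preserved by the covering map (weights are inherited, as specified in both Theorem \ref{dealing}\eqref{dealing1} and the proof of Theorem \ref{Un}\eqref{Un2}). All the genuine work---strong approximation for the quaternionic unitary group in Theorem \ref{g>1}, the factoring of isogenies in Theorem \ref{elect}, the uniformization of the Brandt graph in Theorem \ref{building q}---has already been carried out in earlier sections.
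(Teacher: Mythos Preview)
Your proposal is correct and matches the paper's proof exactly: the paper proves \eqref{Dos1} by combining Theorem \ref{smad}\eqref{smad1} with Theorem \ref{Un}\eqref{Un1}, and \eqref{Dos2} by combining Theorem \ref{Un}\eqref{Un2} with Theorem \ref{dealing}\eqref{dealing1}.
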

\begin{proof}
\eqref{Dos1}:
Combine Theorem \ref{smad}\eqref{smad1}
with Theorem \ref{Un}\eqref{Un1}.\\
\eqref{Dos2}: Combine Theorem \ref{Un}\eqref{Un2}
with Theorem \ref{dealing}\eqref{dealing1}.
\end{proof}
\begin{remark1}
\label{chew}
{\rm 
\begin{enumerate}[\upshape (a)]
\item
\label{chew1}
Theorem \ref{Dos} once again immediately implies that the 
isogeny graphs $\gr_{\!g}(\ell, p)$ and $\wgr_{\!g}(\ell, p)$ are connected.
However, note that the proof of Theorem \ref{Dos} uses Theorem
\ref{building q}, which in turn uses Theorem \ref{fun}.

\item
\label{chew2}
In case $g=1$ we have $\Sp_2(\Q_\ell)=\SL_2(\Q_\ell)$,
$\cSS_2=\Delta_\ell$, $\rU_1(\cO[1/\ell])=\Gamma_1$, and
$\GU_1(\cO[1/\ell])=\Gamma_0$.  Hence for $g=1$ we recover
Theorem \ref{twooo}.
\item
\label{chew3}
The big isogeny graph $\GR_{\!g}(\ell,p)$ is {\em not}
uniformized by $\cSS_{2g}$ as in the $g=1$ case (Remark \ref{fish})
since it is not a graph with opposites.
\item
\label{chew4}
There would be great interest in generalizing Theorem \ref{font}
and Corollary \ref{font1} to $g>1$.
\end{enumerate}
}
\end{remark1}

\section{Computations: The Ramanujan property for  \texorpdfstring{$\GR_{\!g}(\ell,p)$}{Gr\unichar{"005F}g(\unichar{"2113},p)} with \texorpdfstring{$g>1$}{g>1}}
\label{Ram}
\subsection{A non-Ramanujan example}
\label{Ram1}
To see that  the isogeny graph $\GR_{\!g}(\ell,p)$
is in general non-Ramanujan, consider the case
$\ell=2$, $g=2$, and $p=11$.  Here there are two 
supersingular elliptic curves: $E_1:y^2 = x^3 + 1$ and $E_2:y^2 = x^3
+ x$.  There are also two superspecial genus-$2$ curves: $C_1: y^2=x^6
+ 1$ and $C_2: y^2 = x^6 + 3 x^3 + 1$.  Hence there are five principally
polarized
superspecial abelian surfaces: $E_1\times E_1$, $E_2\times E_2$,
$E_1\times E_2$, and the jacobians $J(C_1)$, $J(C_2)$, the products taken with the product
polarization and the jacobians with their canonical polarizations.  A Richelot isogeny of a principally polarized abelian surface
$(A, \lambda)$ is quotienting by a maximal isotropic subgroup
of $A[2]$.  We computed the Richelot
isogenies  for these principally polarized
abelian surfaces using Magma~\cite{BCP}.  The
adjacency matrix for $\GR_{2}(2,11)$ is
$$
\Ad(\GR_2(2,11))=
\begin{bmatrix}
3 & 9 & 0 & 3 & 0 \\
4 & 3 & 4 & 4 & 0 \\
0 & 3 & 6 & 0 & 6 \\
1 & 3 & 0 & 3 & 8 \\
0 & 0 & 3 & 4 & 8
\end{bmatrix};
$$
the row-sums of this matrix are all $15=N_2(2)=(1+2)(1+2^2)$.
The eigenvalues of this matrix are $15$, $7\pm\sqrt{3}$, and
$-3\pm\sqrt{3}$.  The second largest of these is $7+\sqrt{3} > 2\sqrt{14}$.
 Hence the graph is not Ramanujan.
\subsection{A Ramanujan example}
\label{Ram2}
To see that the isogeny graph $\GR_{\!g}(\ell,p)$ can (rarely) be
Ramanujan, consider the case $\ell=2$, $g=2$, and $p=7$. In characteristic
$7$ there
is one supersingular elliptic curve $E: y^2=x^3-x$ and one superspecial
genus-$2$ curve $C:y^2=x^5+x$. There are two principally polarized superspecial
abelian surfaces: $E\times E$ with the product polarization and the 
jacobian $J(C)$ of $C$ with its canonical polarization.
The adjacency matrix for $\GR_2(2,7)$ is
$$
\Ad(\GR_2(2,7))=\begin{bmatrix}
11 & 4\\
6 & 9\end{bmatrix}.
$$
Again, the graph $\GR_2(2,7)$ is $15$-regular and
we see that the row sums of $\Ad(\GR_2(2,7))$
are all $15$.
The eigenvalues of this matrix are $15$ and $5$.
Since $5<2\sqrt{14}$, the graph $\GR_2(2,7)$ is Ramanujan.

\subsection{A range of computations}
\label{Ram3}
We computed $\GR_{\!g}(\ell,p)$ using Theorem \ref{adbig} by
calculating the Brandt matrix $B_g(\ell)$ for the maximal order
$\cO=\End(E)\subseteq \HH_p$.  We were able to do this
for all primes $p\le p_{\rm max}$ and
$(g,\ell,p_{\rm max})$ one of $(2,2,311)$, $(2,3,257)$, $(2,5,173)$,
$(3,2,41)$, $(3,3,23)$.  Hence in these ranges we could determine
whether the big isogeny graph $\GR_g(\ell,p)$ is Ramanujan.

The graph is trivially Ramanujan, due to having only one vertex, when
$(g,p)=(2,2),\, (2,3),$ or $(3,2)$ and $\ell$ arbitrary -- the number of
vertices only depends on $(g,p)$ and not on $\ell$.

Otherwise, the only Ramanujan examples we found are when $(g,\ell,p)$
is one of $(2,2,5)$, $(2,2,7)$, $(2,3,7)$, $(3,2,3)$.  (All these
graphs have two vertices, but not every two-vertex graph is
Ramanujan.)

\section*{Acknowledgments}
The proof of Theorem \ref{hen}\eqref{hen7} using Gelfand's trick
 was worked out 
with Guy Henniart and Marie-France Vign\'{e}ras.  
Nicholas Shepherd-Barron suggested 
using \cite[Chap.~11]{vM} to prove our Theorem  \ref{yam}.  
Ching-Li Chai's comments
greatly improved our paper.
We also benefited from conversations and correspondence with
Tomoyoshi Ibukiyama and Frans Oort. The referee's careful reading
and suggestions went beyond the call of duty.
It is a pleasure to
thank them all.

\bibliographystyle{plain}
\bibliography{IG9b}

\end{document}